\newcommand{\relmiddle}[1]{\mathrel{}\middle#1\mathrel{}}
\numberwithin{equation}{subsection}
\newtheorem{theorem}{Theorem}[subsection]
\newtheorem{lemma}[theorem]{Lemma}
\newtheorem{conjecture}[theorem]{Conjecture}
\newtheorem{corollary}[theorem]{Corollary}
\newtheorem{definition}[theorem]{Definition}
\newtheorem{proposition}[theorem]{Proposition}
\newtheorem{assumption}[theorem]{Assumption}
\newtheorem*{thma}{Theorem A}
\newtheorem*{thm1}{Theorem 1}
\newtheorem*{thmb}{Theorem B}
\newtheorem*{thmc}{Theorem C}
\newtheorem*{thmd}{Theorem D}
\newtheorem*{conj1}{Conjecture 1}
\newtheorem*{ques1}{Question 1}
\theoremstyle{remark}
\newtheorem{rmk}[theorem]{Remark}
\newtheorem{exa}[theorem]{Example}
\numberwithin{equation}{subsection}
\newcommand{\GZip}{\mathop{\text{$G$-{\tt Zip}}}\nolimits}
\newcommand{\GoneZip}{\mathop{\text{$G_1$-{\tt Zip}}}\nolimits}
\newcommand{\GtwoZip}{\mathop{\text{$G_2$-{\tt Zip}}}\nolimits}
\newcommand{\GF}{\mathop{\text{$G$-{\tt ZipFlag}}}\nolimits}
\DeclareMathOperator{\pr}{pr}
\DeclareMathOperator{\Gal}{Gal}
\DeclareMathOperator{\Sbt}{Sbt}
\DeclareMathOperator{\GL}{GL}
\newskip\procskipamount
\newskip\interskipamount
\newskip\refskipamount
\newcommand{\procskip}{\vskip\procskipamount}
\newcommand{\interskip}{\vskip\interskipamount}
\newcommand{\refskip}{\vskip\refskipamount}
\newcommand{\procbreak}{\par
   \ifdim\lastskip<\procskipamount\removelastskip
   \penalty-100
   \procskip\fi
   \noindent\ignorespaces}
\newcommand{\titlebreak}{\par%
\ifdim\lastskip<\interskipamount\removelastskip%
\penalty10000%
\interskip\fi%
\noindent}%
\newcommand{\interbreak}{\par%
\ifdim\lastskip<\interskipamount\removelastskip%
\penalty-100%
\interskip\fi%
\noindent\ignorespaces}%
\newcommand{\refbreak}{\par%
\ifdim\lastskip<\refskipamount\removelastskip%
\penalty-100%
\refskip\fi%
\noindent\ignorespaces}%
\newcounter{listcounter}
\newcounter{deflistcounter}
\newcounter{equivcounter}
\newskip{\itemsepamount}
\newskip{\topsepamount}
\newenvironment{assertionlist}{%
  \begin{list}
    {\upshape (\arabic{listcounter})}
    {\setlength{\leftmargin}{18pt}
     \setlength{\rightmargin}{0pt}
     \setlength{\itemindent}{0pt}
     \setlength{\labelsep}{5pt}
     \setlength{\labelwidth}{13pt}
     \setlength{\listparindent}{\parindent}
     \setlength{\parsep}{0pt}
     \setlength{\itemsep}{\itemsepamount}
     \setlength{\topsep}{\topsepamount}
     \usecounter{listcounter}}}
  {\end{list}}
\newenvironment{definitionlist}{%
  \begin{list}
    {\upshape (\alph{deflistcounter})}
    {\setlength{\leftmargin}{18pt}
     \setlength{\rightmargin}{0pt}
     \setlength{\itemindent}{0pt}
     \setlength{\labelsep}{5pt}
     \setlength{\labelwidth}{13pt}
     \setlength{\listparindent}{\parindent}
     \setlength{\parsep}{0pt}
     \setlength{\itemsep}{\itemsepamount}
     \setlength{\topsep}{\topsepamount}
     \usecounter{deflistcounter}}}
  {\end{list}}
\newenvironment{equivlist}{%
  \begin{list}
    {\upshape (\roman{equivcounter})}
    {\setlength{\leftmargin}{18pt}
     \setlength{\rightmargin}{0pt}
     \setlength{\itemindent}{0pt}
     \setlength{\labelsep}{5pt}
     \setlength{\labelwidth}{13pt}
     \setlength{\listparindent}{\parindent}
     \setlength{\parsep}{0pt}
     \setlength{\itemsep}{\itemsepamount}
     \setlength{\topsep}{\topsepamount}
     \usecounter{equivcounter}}}
  {\end{list}}
\newenvironment{bulletlist}{%
  \begin{list}
    {\upshape \textbullet}
    {\setlength{\leftmargin}{18pt}
     \setlength{\rightmargin}{0pt}
     \setlength{\itemindent}{0pt}
     \setlength{\labelsep}{6pt}
     \setlength{\labelwidth}{12pt}
     \setlength{\listparindent}{\parindent}
     \setlength{\parsep}{0pt}
     \setlength{\itemsep}{\itemsepamount}
     \setlength{\topsep}{\topsepamount}}}
  {\end{list}}
\newcommand{\Acal}{{\mathcal A}}
\newcommand{\Ccal}{{\mathcal C}}
\newcommand{\Fcal}{{\mathcal F}}
\newcommand{\Gcal}{{\mathcal G}}
\newcommand{\Lcal}{{\mathcal L}}
\newcommand{\Ocal}{{\mathcal O}}
\newcommand{\Pcal}{{\mathcal P}}
\newcommand{\Scal}{{\mathcal S}}
\newcommand{\Ucal}{{\mathcal U}}
\newcommand{\Vcal}{{\mathcal V}}
\newcommand{\Xcal}{{\mathcal X}}
\newcommand{\pfr}{{\mathfrak p}}
\renewcommand{\AA}{\mathbb{A}}
\newcommand{\CC}{\mathbb{C}}
\newcommand{\FF}{\mathbb{F}}
\newcommand{\GG}{\mathbb{G}}
\newcommand{\KK}{\mathbb{K}}
\newcommand{\NN}{\mathbb{N}}
\newcommand{\PP}{\mathbb{P}}
\newcommand{\QQ}{\mathbb{Q}}
\newcommand{\RR}{\mathbb{R}}
\newcommand{\ZZ}{\mathbb{Z}}
\newcommand{\Ascr}{{\mathscr A}}
\newcommand{\Hom}{{\rm Hom}}
\newcommand{\loccit}{{\em loc.\ cit. }}
\newcommand{\Sh}{{\rm Sh }}
\newcommand{\tor}{{\rm tor}}
\DeclareMathOperator{\Res}{Res}
\DeclareMathOperator{\Adj}{Adj}
\DeclareMathOperator{\GS}{GS}
\DeclareMathOperator{\flag}{flag}
\DeclareMathOperator{\Flag}{Flag}
\DeclareMathOperator{\zip}{zip}
\DeclareMathOperator{\Adm}{Adm}
\DeclareMathOperator{\PGL}{PGL}
\DeclareMathOperator{\GU}{GU}
\DeclareMathOperator{\U}{U}
\DeclareMathOperator{\GSp}{GSp}
\DeclareMathOperator{\Norm}{Norm}
\DeclareMathOperator{\Ind}{Ind}
\newcommand{\gen}{\mathsf{gen}}
\newcommand{\Ha}{\mathsf{Ha}}
\newcommand{\ha}{\mathsf{ha}}
\newcommand{\pha}{\mathsf{pHa}}
\newcommand{\zipsf}{\mathsf{zip}}
\newcommand{\GSsf}{\mathsf{GS}}
\newcommand{\lwsf}{\mathsf{lw}}
\newcommand{\lw}{\mathsf{lw}}
\newcommand{\hw}{\mathsf{hw}}
\newcommand{\low}{\mathsf{low}}
\newcommand{\high}{\mathsf{high}}
\author{Jean-Stefan Koskivirta}
\begin{document}

\title{Cohomology vanishing for Ekedahl--Oort strata on Hilbert--Blumenthal Shimura varieties}

\date{}
\maketitle

\begin{abstract}
We prove vanishing results for the $0$th cohomology of automorphic line bundles on the Ekedahl--Oort strata of the special fiber of Hilbert--Blumenthal Shimura varieties. This work vastly extends to all Ekedahl--Oort strata previous results of Diamond--Kassaei and Goldring--Koskivirta. Furthermore, our results also cover the case of unitary Shimura varieties of rank $\leq 2$ attached to an arbitrary CM extension. We also prove a conjecture of Goldring--Koskivirta on the set of possible weights of automorphic forms.
\end{abstract}

\section{Introduction}

In this paper, we prove a sharp vanishing result for automorphic line bundles on certain stratifications of Shimura varieties. The family of Shimura varieties considered in this paper will be called "of $A_1$-type". These include Hilbert--Blumenthal Shimura varieties and unitary Shimura varieties attached to a hermitian space of dimension $2$ over a CM extension $\mathbf{E}/\mathbf{F}$. We only consider the cohomology in degree $0$. We start be reviewing the vanishing results of Diamond--Kassaei (\cite{Diamond-Kassaei-cone-minimal}) and Goldring--Koskivirta (\cite{Goldring-Koskivirta-global-sections-compositio}), of which this paper is a generalization. Fix a totally real extension $\mathbf{F}/\QQ$. Hilbert--Blumenthal Shimura varieties are attached to a certain reductive $\QQ$-group $\mathbf{G}\subset \Res_{\mathbf{F}/\QQ}(\GL_{2,\mathbf{F}})$ (see section \ref{Shim-A1-type-sec} for details). They parametrizes abelian varieties of rank $n=[\mathbf{F}:\QQ]$ endowed with a principal polarization, a compatible action of $\Ocal_{\mathbf{F}}$ and a level structure. Let $X=X_{\overline{\FF}_p}$ denote the special fiber (over $\overline{\FF}_p$) of a Hilbert--Blumenthal Shimura variety attached to $\mathbf{F}$ at a prime number $p$ of good reduction. It is a quasi-projective variety of dimension $n$ defined over $\FF_p$. Denote by $\Sigma$ the set of embeddings $\tau \colon \mathbf{F}\to \overline{\QQ}_p$. Since by assumption $\mathbf{F}$ is unramified at $p$, there is a natural action of the Frobenius homomorphism $\sigma\in \Gal(\QQ_p^{\rm ur}/\QQ_p)=\Gal(\overline{\FF}_p/\FF_p)$ on $\Sigma$ that we denote by $\tau\mapsto \sigma\tau$. Denote by $\Omega$ the Hodge vector bundle of $X$, namely the vector bundle $e^*(\Omega^1_{\Ascr/X})$ where $\Ascr/X$ is the universal abelian variety and $e\colon X\to \Ascr$ is the unit section. The action of $\Ocal_{\mathbf{F}}$ on $\Ascr$ decomposes $\Omega$ as a direct sum $\Omega=\bigoplus_{\tau\in \Sigma} \omega_\tau$. For a tuple $\mathbf{k}=(k_\tau)_{\tau\in \Sigma}\in \ZZ^\Sigma$, we define a line bundle
\begin{equation}\label{omegak-intro}
    \omega^{\mathbf{k}} \colonequals \bigotimes_{\tau\in \Sigma} \omega^{-k_\tau}_\tau.
\end{equation}
Note the minus sign in the exponent of $\omega_\tau$; this sign convention differs from \cite{Diamond-Kassaei-cone-minimal}. We use it for consistency with the results of \cite{Goldring-Koskivirta-global-sections-compositio}, which are formulated for more general groups. The global sections $H^0(X,\omega^{\mathbf{k}})$ are called Hilbert modular forms of weight $\mathbf{k}$. The Ekedahl--Oort (EO) stratification of $X$ is defined as follows: Two points $x,y\in X$ are in the same stratum if the abelian varieties $A_x$, $A_y$ corresponding to $x,y$ satisfy that $A_x[p]\simeq A_y[p]$, where we require this isomorphism to be compatible with the polarizations and the action of $\Ocal_{\mathbf{F}}$. There is a unique open stratum called the ordinary locus, where $A_x$ is an ordinary abelian variety. The EO strata are parametrized by subsets $S\subset \Sigma$, and the dimension of the stratum $X_S$ corresponding to the subset $S$ is $|S|$. For each $\tau\in \Sigma$, Andreatta--Goren (\cite{Andreatta-Goren-book}) constructed a partial Hasse invariant $\Ha_\tau$ whose vanishing locus is precisely the codimension one stratum corresponding to the subset $S\setminus\{\tau\}$. The weight of $\Ha_\tau$ is given by $\ha_\tau \colonequals e_\tau - p e_{\sigma^{-1} \tau}$. It was proved by Goldring and the author (\cite[Theorem D (a)]{Goldring-Koskivirta-global-sections-compositio}), as well as Diamond--Kassaei (\cite[Corollary 8.2]{Diamond-Kassaei-cone-minimal}) that for any $\mathbf{k}\in \ZZ^\Sigma$ which is outside of the cone spanned (over $\QQ_{\geq 0}$) by the weights $\ha_\tau$ that $H^0(X,\omega^{\mathbf{k}})=0$. Furthermore, \cite{Goldring-Koskivirta-global-sections-compositio} shows a more general result for other Ekedahl--Oort strata, termed admissible. The admissibility is an explicit combinatorial condition on the subset $S$ (see \loccit Definition 4.2.1). For any $S$ and any $\tau\in \Sigma$, one can define a generalized partial Hasse invariant $\Ha_{S,\tau}$ over $\overline{X}_S$ (the Zariski closure of $X_S$ endowed with the reduced structure). For $\tau\in S$, the section $\Ha_{S,\tau}$ is simply the restriction of $\Ha_\tau$ to $\overline{X}_S$. For $\tau\notin S$, the section $\Ha_{S,\tau}$ is a non-vanishing section of weight $\ha_{S,\tau}\colonequals -e_\tau - p e_{\sigma^{-1} \tau}$. Denote by $\Ccal_{\pha,S}$ the elements of $\ZZ^\Sigma$ which can be spanned (over $\QQ_{\geq 0}$) by the weights $\ha_{S,\tau}$. Then, Goldring and the author proved:
\begin{thm1}[{\cite[Theorem 4.2.3]{Goldring-Koskivirta-global-sections-compositio}}]
Let $S$ be an admissible subset. Then for any weight $\mathbf{k}\notin \Ccal_{\pha,S}$, the space $H^0(\overline{X}_S,\omega^{\mathbf{k}})$ is zero.
\end{thm1}
The maximal stratum and the one-dimensional strata are always admissible. When $p$ splits completely in $\mathbf{F}$, all EO strata are admissible. On the other hand, when $p$ is inert in $\mathbf{F}$, very few strata are admissible. In this article, we give a vanishing result that applies to all Ekedahl--Oort strata and generalizes the above theorem. Moreover, we include other Shimura varieties attached to similar reductive groups.

We say that a reductive group $\mathbf{G}$ over a field $k$ is of $A_1$-type and rank $n$ if $\mathbf{G}^{\rm ad}_{\overline{k}}$ is isomorphic to a product of $n$ copies of $\PGL_{2,\overline{k}}$, where $\overline{k}$ is an algebraic closure of $k$. We let $X=X_{\overline{\FF}_p}$ be the good reduction special fiber of a Hodge-type Shimura variety attached to a group of $A_1$-type. Aside the case of Hilbert--Blumenthal varieties previously mentioned, we may also consider certain unitary Shimura varieties. Specifically, let $\mathbf{E}/\mathbf{F}$ be a CM-extension where $n=[\mathbf{F}:\QQ]$ and $(\mathbf{V},\psi)$ a hermitian space where $V$ is a 2-dimensional $\mathbf{E}$-vector space. Then, the group of unitary similitudes $\mathbf{G}\colonequals \GU(\mathbf{V},\psi)$ (restricted to $\QQ$) is a group of $A_1$-type of rank $n$. Contrary to the Hilbert--Blumenthal case, the Hodge parabolic $P$ (the stabilizer of the Hodge filtration) may be larger than a Borel subgroup in the unitary case. Let $\Sigma$ denote the set of embeddings $\tau \colon \mathbf{F}\to \RR$ and let $\Sigma_0 \subset \Sigma$ be the subset of embeddings where $\mathbf{V}\otimes_{\mathbf{F},\tau} \RR$ has signature $(2,0)$ or $(0,2)$. Then, the Hodge parabolic $P$ (viewed in $\mathbf{G}^{\rm ad}$) will contain all the factors corresponding to elements of $\Sigma_0$. Fix an isomorphism $\CC\simeq \overline{\QQ}_p$ and view $\Sigma$ as a subset of $\Hom_{\QQ}(\mathbf{F},\overline{\QQ}_p)$. Denote again by $\sigma$ the action of Frobenius on $\Sigma$. Then, $\Sigma$ decomposes as a disjoint union of orbits under the action of $\sigma$. All of our results can be reduced to the case of a single orbit. The reason is that we use as a main tool the stack of $G$-zips of Pink--Wedhorn--Ziegler (\cite{Pink-Wedhorn-Ziegler-zip-data,Pink-Wedhorn-Ziegler-F-Zips-additional-structure}), where $G$ is the $\overline{\FF}_p$ group obtained by reducing $\mathbf{G}$ modulo $p$, and this stack decomposes as a direct product with respect to the $\sigma$-orbits in $\Sigma$. Therefore, for the rest of this introduction, we will assume that $p$ is an inert prime in $\mathbf{F}$.

We order the elements of $\Sigma$ as $\tau_1,\dots, \tau_n$ such that $\tau_{i+1}=\sigma \tau_i$, where the index $i$ is taken modulo $n$. We identify in this way $\Sigma$ with $E_n=\{1,\dots,n\}$. Furthermore, we write $R\subset E_n$ for the subset corresponding to $\Sigma_0$ and call it the parabolic type of $X$. For example, when $X$ is a Hilbert--Blumenthal Shimura variety, we have $R=\emptyset$. We let $\Flag(X)$ denote the flag space of $X$, as defined by Goldring and the author in \cite{Goldring-Koskivirta-Strata-Hasse}, based on previous work by Ekedahl--van der Geer (\cite{Ekedahl-Geer-EO}). It parametrizes pairs $(x,\Fcal_{\bullet})$ where $x$ is a point of $X$ and $\Fcal_\bullet$ is a full flag in $H^1_{\rm dR}(A_x)$ refining the Hodge filtration. The natural projection $\pi\colon \Flag(X)\to X$ is proper with fibers isomorphic to a product of $|R|$ copies of $\PP^1$. When $R=\emptyset$, we simply have $\Flag(X)=X$. The flag space is endowed with a natural stratification $(\Flag(X)_S)_S$ parametrized by subsets $S\subset E_n$. Furthermore, for any $\mathbf{k}=(k_1,\dots, k_n)\in \ZZ$, it carries a line bundle $\Lcal(\mathbf{k})$ (for $R=\emptyset$, one has $\Lcal(\mathbf{k})=\omega^{\mathbf{k}}$). The push-forward $\pi_*(\Lcal(\mathbf{k}))$ is the automorphic vector bundle $\Vcal(\mathbf{k})$ attached to the induced representation $V(\mathbf{k})\colonequals \Ind_{B}^P(\mathbf{k})$. Denote by $\overline{\Flag}(X)_S$ the Zariski closure of $\Flag(X)_S$, endowed with the reduced structure. We are interested in the following problem:
\begin{ques1}
    For which $\mathbf{k}\in \ZZ^n$ is the space $H^0(\overline{\Flag}(X)_S,\Lcal(\mathbf{k}))$ nonzero?
\end{ques1}
In the Hilbert--Blumenthal case, $\Flag(X)_S$ is simply the Ekedahl--Oort stratum $X_S$, and Question 1 is thus very natural in and of itself. When $R\neq \emptyset$, we are mostly interested in the case of the maximal stratum, corresponding to $S=E_n$. Since $\pi_*(\Lcal(\mathbf{k}))=\Vcal(\mathbf{k})$, Question 1 boils down to understanding for which $\mathbf{k}\in \ZZ$ the space $H^0(X,\Vcal(\mathbf{k}))$ is nonzero. In the case $R\neq \emptyset$, even though we are interested in the case $S=E_n$, we need to solve Question 1 for each stratum $S$ in order to get to the maximal stratum, by an inductive procedure.

We now explain the results of this paper in details. We fix a subset $R\subset E_n$ and a Shimura variety $X=X_R$ of $A_1$-type and parabolic type $R$. We say that a subset $\Ccal\subset \ZZ^n$ is a $p$-cone if it is defined by finitely many inequalities of the form 
\begin{equation}\label{pexpr}\tag{1}
    \sum_{i=0}^{n-1} p^i \varepsilon_{i+d} \ x_{i+d} \leq 0, \quad (x_1,\dots, x_n)\in \ZZ^n
\end{equation}
where $\varepsilon_1,\dots, \varepsilon_n\in \{\pm 1\}$ and the index $i+d$ is taken modulo $n$. We call the expression appearing in \eqref{pexpr} a $p$-expression with starting index $d$. Such an expression is uniquely determined by its starting index $d$ and the set $T\subset E_n$ of indices $i$ with $\varepsilon_i=-1$. Let $S\subset E_n$ be a subset. We say that a $p$-cone $\Ccal$ is $S$-adapted if it is defined by exactly $|S|$ inequalities of the type \eqref{pexpr}, with starting index each of the element of $S$. An $S$-adapted $p$-cone $\Ccal$ is thus uniquely determined by a function
\begin{equation}
    \rho_\Ccal\colon S\to \Pcal(E_n)
\end{equation}
(where $\Pcal(E_n)$ is the powerset of $E_n$), which attaches to each element $s\in S$ the set of indices $i\in E_n$ such that $\varepsilon_i=-1$ in the $p$-expression with starting index $s$ defining $\Ccal$. We say that an $S$-adapted $p$-cone is homogeneous if $\rho_\Ccal$ is a constant function. For each subset $S\subset E_n$, there are $n-|S|$ non-vanishing sections $\Ha^{(i)}_{R,S}$ on the stratum $\overline{\Flag}(X)_S$ parametrized by the elements $i\in E_n\setminus S$ (see section \ref{subsec-admiss} for details). We write $\ha_{R,S}^{(i)}$ for the weight of $\Ha_{R,S}^{(i)}$. For any $\mathbf{k}$ in the subgroup spanned (over $\ZZ$) by these weights, the line bundle $\Lcal(\mathbf{k})$ is trivial on $\overline{\Flag}(X)_S$. We say that an $S$-adapted $p$-cone is admissible if the $p$-expressions that define $\Ccal$ all vanish on this subgroup (the term "admissible" is unrelated to the one used in Theorem 1).

Finally, we say that $\Ccal$ is positive if the leading coefficients of the $p$-expressions defining $\Ccal$ with starting index $s\in S$ is $1$ if and only if $s\in R$. Intuitively, this condition is imposed by the geometry of Shimura varieties: when $p$ tends to infinity, $\Ccal$ should contain the line bundles $\Lcal(\mathbf{k})$ that are ample in characteristic zero. We then have the following:

\begin{thma}
For any subset $S\subset E_n$, there exists a unique positive admissible homogeneous $S$-adapted $p$-cone.
\end{thma}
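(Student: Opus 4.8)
A homogeneous $S$-adapted $p$-cone is determined by a single subset $T\subseteq E_n$, namely the constant value of $\rho_{\Ccal}$; conversely every $T\subseteq E_n$ gives such a cone, which I denote $\Ccal_T$. The plan is to encode the properties ``$\Ccal_T$ positive'' and ``$\Ccal_T$ admissible'' as a system of $n$ affine $\FF_2$-linear equations in the unknowns $(T(i))_{i\in E_n}$ (identifying $T$ with its characteristic function $E_n\to\FF_2$), and then to prove that this system has exactly one solution. The case $S=\emptyset$ is trivial, since then $\Ccal_T$ is cut out by no inequalities, equals $\ZZ^n$ for every $T$, and is vacuously positive and admissible; so from now on I assume $S\neq\emptyset$.

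First I would record the two translations. In the $p$-expression with starting index $s$, the $p^{n-1}$-term involves the variable $x_{(n-1)+s}=x_{s-1}$ with sign $\varepsilon_{s-1}=(-1)^{[s-1\in T]}$, so positivity of $\Ccal_T$ is equivalent to: for each $s\in S$, one has $s-1\in T$ exactly when $s\notin R$. Because $j\mapsto j+1$ is a bijection of $E_n$, these ``pin equations'' fix the value of $T$ at each of the $|S|$ points of $S-1\colonequals\{s-1:s\in S\}$. For admissibility I would feed in the explicit weights $\ha^{(i)}_{R,S}$ from Section~\ref{subsec-admiss}: each of them, as an element of $\ZZ^n$, is supported on two consecutive indices around $i$ (its precise form depending on whether $i\in R$) and its two nonzero entries carry powers of $p$ differing by one, matching the structure of a $p$-expression; pairing it with the $p$-expression with starting index $s\in S$ — where the coefficient of $x_j$ is $\pm p^{(j-s)\bmod n}$ — and using $i\neq s$ then lets one cancel a common power of $p$ and obtain a relation between $T(i-1)$ and $T(i)$ that is independent of $s$. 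These ``flip equations'', one for each $i\in E_n\setminus S$, are thus equivalent to admissibility of $\Ccal_T$. (In the Hilbert--Blumenthal case $\ha^{(i)}_{R,S}=-e_i-p\,e_{i-1}$ and the flip equation at $i$ is just $T(i)+T(i-1)=1$.)

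Next I would solve the combined system of $|S|$ pin equations and $n-|S|$ flip equations. Let $\Gamma$ be the graph on vertex set $E_n$ obtained from the $n$-cycle by deleting the $|S|$ edges $\{s-1,s\}$, $s\in S$; thus $\Gamma$ has exactly the edges $\{i-1,i\}$, $i\in E_n\setminus S$. Since $S\neq\emptyset$, $\Gamma$ is a disjoint union of $|S|$ paths whose vertex sets partition $E_n$, each of the form $\{a,a+1,\dots,b\}$ with $a\in S$, $b+1\in S$ and $a+1,\dots,b\notin S$. Among the targets $s-1$ of the pin equations, the only one lying in $\{a,\dots,b\}$ is $b=(b+1)-1$; hence each path-component of $\Gamma$ carries exactly one pin. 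Propagating from that pinned vertex along the path via the flip equations determines $T$ uniquely on the component — a path has no cycles, so no consistency obstruction arises — and running over all $|S|$ components determines $T$ on all of $E_n$. The resulting $T$ visibly satisfies every equation, so this argument yields existence and uniqueness at once.

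The main obstacle is the second step: computing the weights $\ha^{(i)}_{R,S}$ and checking that the admissibility condition ``$L_s$ vanishes on $\ha^{(i)}_{R,S}$'' does not depend on $s\in S$, which is exactly what allows a single $T$ to render the homogeneous cone $\Ccal_T$ admissible for all $|S|$ of its defining $p$-expressions simultaneously. The delicate case is $i\in R$, where the corresponding non-vanishing section lives along the $\PP^1$-factors of the flag fibration $\pi\colon\Flag(X)\to X$ rather than coming from an Andreatta--Goren partial Hasse invariant; there one must verify that the flip equation still links $T(i-1)$ with $T(i)$ (this is what makes $\Gamma$, whose deleted edges are indexed by $S$, break into $|S|$ paths each bearing a single pin) and that it is a non-trivial constraint on $T$. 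Once that is in hand, everything reduces to the elementary combinatorics above.
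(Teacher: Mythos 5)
Your proof is correct and follows essentially the same route as the paper's (Proposition \ref{prop-unique}): positivity pins the value of $T$ at the vertices $s-1$, $s\in S$, which are exactly the heads of the connected components of the chain diagram, and admissibility then propagates $T$ uniquely along each component; your ``flip equations'' $\delta_T^{(i)}=\delta_R^{(i-1)}\delta_T^{(i-1)}$ for $i\notin S$ are precisely the paper's combinatorial Definition \ref{def-admis}, which you re-derive from the vanishing-on-$\KK_S$ characterization (this is the content of Lemma \ref{admiss-lem}). One caveat: your pin condition ``$s-1\in T$ iff $s\notin R$'' takes the introduction's loose phrasing literally, whereas the operative Definition \ref{def-posit} (confirmed by the Griffiths--Schmid motivation and Example \ref{exa-T-compute}) is ``$s-1\in T$ iff $s-1\in R$''; this does not affect the existence-and-uniqueness argument, since either convention pins $T$ at the same set of vertices, but it would produce a $T$ different from the paper's $\Phi_R(S)$.
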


The inequalities defining the cone $\Ccal_S$ can be very explicitly determined using the notion of chain diagrams introduced in section \ref{subsec-chain}. We explain the link with Question 1. Write $\Ccal_S$ for the unique positive admissible homogeneous $S$-adapted $p$-cone afforded by Theorem A. The following is the main result of this paper:
\begin{thmb}
Let $S\subset E_n$ be a subset and $\mathbf{k}\in \ZZ^n$. If $\mathbf{k}\notin \Ccal_S$, then one has
\[
H^0(\overline{\Flag}(X)_S,\Lcal(\mathbf{k})) =0.
\]
\end{thmb}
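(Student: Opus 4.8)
The plan is to argue by descending induction on the dimension $|S|$ of the stratum, using the non-vanishing sections $\Ha_{R,S}^{(i)}$ to transport the problem from the closure of a stratum to the closures of the strata in its boundary, and ultimately to reduce everything to the case of the flag space itself (the stratum $S=E_n$), which in turn is handled on the stack of $G$-zips. First I would recall that on $\overline{\Flag}(X)_S$ the $n-|S|$ sections $\Ha_{R,S}^{(i)}$, $i\in E_n\setminus S$, are nowhere-vanishing, so that multiplication by a suitable monomial in them identifies $H^0(\overline{\Flag}(X)_S,\Lcal(\mathbf{k}))$ with $H^0(\overline{\Flag}(X)_S,\Lcal(\mathbf{k}'))$ whenever $\mathbf{k}-\mathbf{k}'$ lies in the $\ZZ$-span of the weights $\ha_{R,S}^{(i)}$; this is exactly the freedom that makes the admissibility hypothesis on $\Ccal_S$ the right one, since by definition the defining $p$-expressions of $\Ccal_S$ vanish on that span. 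Thus I may replace $\mathbf{k}$ by any representative of its class modulo that span; the point is to choose a representative for which a nonzero global section would force a contradiction.

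The core of the argument is the following dichotomy for a section $0\neq f\in H^0(\overline{\Flag}(X)_S,\Lcal(\mathbf{k}))$. Either $f$ vanishes identically on some codimension-one stratum $\overline{\Flag}(X)_{S'}$ with $S'=S\setminus\{s\}$ in the boundary, in which case $f$ is divisible by the corresponding partial Hasse invariant $\Ha_s$ (whose divisor is that stratum with multiplicity one), and $f/\Ha_s$ is a global section of $\Lcal(\mathbf{k}-\ha_s)$ on $\overline{\Flag}(X)_S$ — this strictly decreases the relevant coordinate and can only happen finitely often, so after finitely many steps we reach a section that does not vanish on any boundary divisor; or $f$ restricts to a nonzero section on every such $\overline{\Flag}(X)_{S'}$, and by the induction hypothesis applied to each $S'$ this forces $\mathbf{k}\in \Ccal_{S'}$ for every $s\in S$. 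The combinatorial heart of the proof is then to show that the inequalities cutting out $\Ccal_S$ are precisely recovered from the inequalities of all the $\Ccal_{S'}$ together with the constraint coming from the non-divisibility by the $\Ha_s$; concretely, the $p$-expression with starting index $s$ defining $\Ccal_S$ should be obtained by combining the $p$-expression of $\Ccal_{S\cup\{?\}}$-type data along the chain diagram of section \ref{subsec-chain}, the leading/positivity condition being exactly what pins down the sign $\varepsilon$'s. I expect this bookkeeping — matching the recursive production of $p$-expressions against the explicit cone $\Ccal_S$ of Theorem A — to be the main obstacle, and it is where the chain-diagram formalism will do the work.

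For the base case $S=E_n$ one cannot descend further, and instead I would pass to the stack of $G$-zips: by the theory of Pink--Wedhorn--Ziegler together with the comparison of Goldring--Koskivirta, a nonzero global section of $\Lcal(\mathbf{k})$ on $\overline{\Flag}(X)_{E_n}=\Flag(X)$ (or more precisely on the corresponding closed substack of $\GpF$) produces a nonzero highest-weight vector in the induced representation $V(\mathbf{k})=\Ind_B^P(\mathbf{k})$ that is moreover invariant under a Frobenius-twisted torus action, and analysing this invariance for a group of $A_1$-type and parabolic type $R$ yields exactly the inequalities \eqref{pexpr} with starting indices running over $E_n$ and the positivity normalization — i.e. $\mathbf{k}\in \Ccal_{E_n}$. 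The uniqueness part of Theorem A guarantees that the cone so produced is the one in the statement. Finally, since the whole setup decomposes as a product over the $\sigma$-orbits in $\Sigma$, it suffices to treat a single inert orbit as reduced to above, and assembling the per-orbit vanishing gives Theorem B in general; the passage between $\overline{\Flag}(X)_S$ and its image in $X$ (when $R\neq\emptyset$) is handled by $\pi_*\Lcal(\mathbf{k})=\Vcal(\mathbf{k})$ and properness of $\pi$, so no cohomology is lost.
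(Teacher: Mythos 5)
Your overall roadmap (dichotomy between a section divisible by a boundary Hasse invariant and one restricting nontrivially to all boundary strata, combined with a combinatorial matching of cone inequalities) is recognizably the strategy underlying the cited Theorem~\ref{thm-sep-syst}, but the way you have set it up contains a genuine structural error. You announce a \emph{descending} induction with base case $S=E_n$, while the argument you actually describe --- dividing $f$ by $\Ha_s$ to drop to $\overline{\Flag}(X)_{S\setminus\{s\}}$, or restricting $f$ to the boundary strata $\overline{\Flag}(X)_{S\setminus\{s\}}$ and invoking the inductive hypothesis there --- runs from a stratum to its \emph{lower}-dimensional boundary. That is an \emph{ascending} induction on $|S|$, and its base case is $|S|=1$ (where $\Ccal_S^{\cap,+}=\Ccal_{\pha,S}$ by definition and vanishing outside $\Ccal_{\pha,S}$ is elementary), not $S=E_n$. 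With your chosen base case the induction never gets off the ground: to handle a generic $S$ you need to know the result for the smaller $S\setminus\{s\}$, and there is no stratum above $E_n$ to descend from.

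Independently, the proposed treatment of the putative base case $S=E_n$ via $G$-zips is circular. The $G$-zip tools available in the paper --- norms of highest- and lowest-weight vectors, the Schubert-stack comparison --- produce \emph{nonzero} sections on $\GZip^\mu$, hence only lower bounds $\Ccal_{\lwsf}\subset\Ccal_{\zipsf}\subset\Ccal_X$; they give no mechanism for showing that a section on $\Flag(X)$ must have weight in $\Ccal_{E_n}$. The upper bound $\Ccal_X\subset\Ccal_{E_n}$ is obtained in the paper precisely by chaining $\Ccal_X\subset\Ccal_{w_0}^{\cap,+}$ (Theorem~\ref{thm-sep-syst}, quoted from the literature) with the identification $\Ccal_{w_0}^{\cap,+}=\Ccal_{E_n}$ (Theorem~\ref{main-thm-hom}). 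You are in effect trying to use the conclusion to establish your base case. You do correctly flag that the real work is the combinatorial identification $\Ccal_S^{\cap,+}=\Ccal_S$, but you neither carry it out nor note that, once it is in hand, Theorem~B is a one-line consequence of the already-proved Theorem~\ref{thm-sep-syst}; re-deriving the divisibility dichotomy as you propose is both unnecessary and, in your formulation, incorrectly organized. (Minor further point: the closing remark that passage via $\pi_*\Lcal(\mathbf{k})=\Vcal(\mathbf{k})$ and properness "handles" the strata when $R\neq\emptyset$ is misplaced --- Theorem~B is a statement on $\overline{\Flag}(X)_S$ itself, and the pushforward identity concerns the full flag space, not the individual flag strata.)
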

When $R=\emptyset$, this result extends to all strata Theorem 1 of \cite{Goldring-Koskivirta-global-sections-compositio} explained earlier and gives a vanishing result for the cohomology of automorphic line bundles for any EO stratum of Hilbert--Blumenthal Shimura varieties. Note that in general, the natural cone that controls the vanishing of cohomology is not $\Ccal_{\pha,S}$ but rather $\Ccal_S$. This explains why Theorem 1 could only cover certain strata, precisely the ones for which $\Ccal_{\pha,S}$ and $\Ccal_S$ coincide. We explain how to determine the cone $\Ccal_S$ explicitly. For simplicity, we restrict here to the case of Hilbert--Blumenthal Shimura varieties (i.e. $R=\emptyset$). For any subset $S\subset E_n$, we define a subset $\Phi(S)\subset E_n$ as follows: For each element $s\in S$, we consider the sequence $s-1, s-2, \dots$ where these elements are taken modulo $n$. We denote by $\gamma(s)$ the smallest positive integer such that $s-\gamma(s)\in S$. Then, we define $\Phi(S)$ as the set
\begin{equation}
    \Phi(S)=\{s-i \ | \ s\in S, \ i \textrm{ odd}, \ 1\leq i <\gamma(s) \}.
\end{equation}
The cone $\Ccal_S$ is the $S$-adapted homogeneous $p$-cone that corresponds to the constant function $\rho\colon S\to \Pcal(E_n)$ with value $\Phi(S)$. For example, if $n=7$ and $S=\{1, 3\}$, then $\Phi(S)=\{5,7\}$. It follows that the cone $\Ccal_S$ is defined by the two inequalities
\[
\begin{cases}
    x_1 + p x_2 + p^2 x_3 + p^3 x_4 - p^4 x_5 + p^5 x_6 - p^6 x_7 \leq 0 \\
    p^5 x_1 + p^6 x_2 + x_3 + p x_4 - p^2 x_5 + p^3 x_6 - p^4 x_7 \leq 0.    
\end{cases}
\]
We briefly explain our approach to prove Theorem B. It is based on the notion of intersection-sum cone, first introduced by Goldring and the author in the recent preprints \cite{Goldring-Koskivirta-divisibility, Goldring-Koskivirta-GS-cone}. It is a natural construction that attaches recursively to each stratum $S$ a cone $\Ccal^{\cap,+}_S\subset \ZZ^n$. When $|S|=1$, it is simply the cone $\Ccal_{\pha,S}$ of partial Hasse invariants on $S$. For other strata $S$, it is defined as the convex hull of $\Ccal_{\pha,S}$ and the intersection of the cones $\Ccal^{\cap,+}_{S\setminus\{s\}}$ when $s$ varies in $S$. The main property of these cones is that the space $H^0(\overline{\Flag}(X)_S,\Lcal(\mathbf{k}))$ is always zero when $\mathbf{k}\notin \Ccal^{\cap,+}_S$. Theorem B is then a consequence of the following:

\begin{thmc}
    For any subset $S\subset E_n$, the cone $\Ccal^{\cap,+}_S$ coincides with the cone $\Ccal_S$ afforded by Theorem A.
\end{thmc}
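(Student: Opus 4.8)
The plan is to prove the equality $\Ccal^{\cap,+}_S = \Ccal_S$ by induction on $|S|$, exploiting the fact that both sides are built recursively. The base case $|S|=1$ is immediate: by definition $\Ccal^{\cap,+}_S = \Ccal_{\pha,S}$, and for a one-element set $S=\{s\}$ the set $\Phi(S)$ is empty (there are no odd $i$ with $1\le i<\gamma(s)=n$ lying strictly between consecutive elements of $S$ — wait, one must check this against the explicit recipe), so the homogeneous $S$-adapted $p$-cone $\Ccal_S$ attached to the constant function with value $\Phi(S)$ reduces to the single inequality cut out by the partial Hasse invariant weight $\ha_{R,s}$; one verifies directly that this is exactly $\Ccal_{\pha,S}$. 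The uniqueness in Theorem A guarantees there is nothing else to check here.

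For the inductive step, suppose the claim holds for all strata of size $<|S|$. By the recursive definition, $\Ccal^{\cap,+}_S$ is the convex hull of $\Ccal_{\pha,S}$ together with $\bigcap_{s\in S}\Ccal^{\cap,+}_{S\setminus\{s\}}$, which by the inductive hypothesis equals the convex hull of $\Ccal_{\pha,S}$ and $\bigcap_{s\in S}\Ccal_{S\setminus\{s\}}$. So the task reduces to the purely combinatorial identity
\[
\operatorname{conv}\Bigl(\Ccal_{\pha,S}\cup \bigcap_{s\in S}\Ccal_{S\setminus\{s\}}\Bigr) = \Ccal_S,
\]
where every cone on the left is now explicitly described by $p$-expressions via Theorem A / the $\Phi$-recipe. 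First I would establish the inclusion $\Ccal_S \subseteq \Ccal^{\cap,+}_S$: this amounts to showing that each defining inequality of $\Ccal_S$ (a $p$-expression with starting index $s\in S$ and negative-set $\Phi(S)$) is nonpositive on $\Ccal_{\pha,S}$ and on each $\Ccal_{S\setminus\{s'\}}$. The first is the admissibility statement already packaged in Theorem A; the second should follow by comparing $\Phi(S)$ with $\Phi(S\setminus\{s'\})$ — removing an element $s'$ merges two "gaps" in $S$ and changes the parity pattern in a controlled way, so each $p$-expression of $\Ccal_S$ can be written as a nonnegative combination of $p$-expressions of $\Ccal_{S\setminus\{s'\}}$ (for the right starting index) possibly plus partial-Hasse contributions. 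This is where the chain-diagram bookkeeping of section \ref{subsec-chain} does the real work.

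The reverse inclusion $\Ccal^{\cap,+}_S \subseteq \Ccal_S$ is the harder direction and the main obstacle. Here one must show that no point outside $\Ccal_S$ can be written as a convex combination of a point of $\Ccal_{\pha,S}$ and a point lying in all the $\Ccal_{S\setminus\{s\}}$; equivalently, that each defining inequality of $\Ccal_S$ is implied by the inequalities cutting out the right-hand side. I would dualize: pick a facet-defining $p$-expression $\ell$ of $\Ccal_S$ with starting index $s_0$, and produce an explicit certificate expressing $\ell$ as a $\QQ_{\ge 0}$-combination of (i) the weights $\ha_{R,S}^{(i)}$ (which vanish identically on $\overline{\Flag}(X)_S$, hence are "free") and (ii) one defining $p$-expression from each of finitely many $\Ccal_{S\setminus\{s\}}$. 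Because all these are $p$-expressions — polynomials in $p$ with $\pm1$ coefficients — matching them up is a question about $p$-adic digit expansions, and the constant function $\rho_\Ccal \equiv \Phi(S)$ is precisely what makes the homogeneous choice compatible across all starting indices simultaneously. The key structural input is that $\Phi$ behaves well under deleting one element: $\Phi(S\setminus\{s'\})$ restricted to the unaffected part of the cycle agrees with $\Phi(S)$, and on the newly-merged gap the parities realign so that the difference of the two $p$-expressions is again a $p$-expression (a partial Hasse weight). Carrying this through uniformly — i.e. organizing the induction so that the certificate for $S$ is assembled from those for the $S\setminus\{s\}$ — is the technical heart; I expect it to occupy most of the section and to rely essentially on the chain-diagram formalism to keep the combinatorics of gaps, parities, and starting indices under control. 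Once both inclusions are in hand, uniqueness from Theorem A identifies the common cone as $\Ccal_S$, completing the proof.
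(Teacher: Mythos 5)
Your overall frame (induction on $|S|$, unwinding the recursion, and reducing to two inclusions between explicit $p$-cones) matches the paper, but there is a genuine gap: both of your paragraphs in fact argue the \emph{same} inclusion, and the other one is never addressed. Checking that each defining $p$-expression of $\Ccal_S$ is nonpositive on $\Ccal_{\pha,S}$ and on $\bigcap_{s}\Ccal_{S\setminus\{s\}}$ shows that those cones (hence their saturated sum $\Ccal^{\cap,+}_S$) are contained in the half-spaces cutting out $\Ccal_S$; that is the inclusion $\Ccal^{\cap,+}_S\subseteq\Ccal_S$, not $\Ccal_S\subseteq\Ccal^{\cap,+}_S$ as you label it. Your second paragraph (the dual certificate expressing a defining form of $\Ccal_S$ as a $\QQ_{\geq0}$-combination of one defining form from each $\Ccal_{S\setminus\{s\}}$) is again the inclusion $\Ccal^{\cap,+}_S\subseteq\Ccal_S$, and it is indeed how the paper proves it: after reducing by the Galois translation to starting index $s_1=1$ and disposing of removable $s_k$ via Lemma \ref{lemma-remove}, one exhibits the explicit identity $\lambda F^{(s_1)}_T(x)=\sum_{k=1}^r\lambda_k F^{(s_k)}_{T_{k+1}}(x)$ with coefficients built from $2^{k-1}p^{s_k}(p^n\pm1)^{\ast}$, split into the cases where $s_1$ is or is not removable. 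Your sketch correctly identifies the shape of this certificate and the key combinatorial input (that $\Phi(S)$ and $\Phi(S\setminus\{s_k\})$ agree off $[s_{k-1},s_k[$ and are complementary there when $s_k$ is not removable), but leaves the existence of the positive coefficients entirely to ``bookkeeping''; that computation is the technical heart and cannot be waved through.

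The direction that is actually missing, $\Ccal_S\subseteq\Ccal^{\cap,+}_S$, requires a different idea, since $\Ccal_S$ is presented by inequalities while $\Ccal^{\cap,+}_S$ is presented as a (saturated) sum. The paper produces an explicit system of $\QQ_{\geq0}$-generators of $\Ccal_S$ modulo its kernel, namely the vectors $\gen_{R,S}^{(t)}=\delta_T^{(t)}e_t-p\delta_R^{(t-1)}e_{t-1}$ for $t\in S$ with $T=\Phi_R(S)$, after first identifying $\KK(\Ccal_S)=\KK_S\subseteq\Ccal_{\pha,S}$ via the rank count of Proposition \ref{lem-KS-homog}. Proposition \ref{gen-prop} then checks that $\gen_{R,S}^{(t)}$ equals the partial Hasse weight $\ha_{R,S}^{(t)}$ (hence lies in $\Ccal_{\pha,S}$) when $t\notin T$, and lies in every $\Ccal_{S\setminus\{s_j\}}$ when $t\in T$; a saturation argument finishes. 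None of this (generators, kernel identification, or the sign checks of Proposition \ref{gen-prop}) appears in your proposal, so as written the proof establishes at best one of the two inclusions.
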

This result illustrates that the formation of the intersection-sum cones is natural and contains meaningful information. Finally, we describe our last result, which is related to a conjecture of Goldring and the author, formulated initially in \cite{Goldring-Koskivirta-global-sections-compositio}. Define the set
\begin{equation}
    C_{X}\colonequals \{ \mathbf{k}\in \ZZ^n \ | \ H^0(X,\Vcal(\mathbf{k}))\neq 0\}.
\end{equation}
Since $\pi_*(\Lcal(\mathbf{k}))=\Vcal(\mathbf{k})$, this set coincides with the set of $\mathbf{k}\in \ZZ^n$ such that $\Lcal(\mathbf{k})$ admits nonzero sections on $\Flag(X)$. Define also $\Ccal_X$ as the saturation of $C_X$, i.e. the set of $\mathbf{k}\in \ZZ^n$ that are spanned (over $\QQ_{\geq 0}$) by the elements of $C_X$. The Cone Conjecture asserts that the set $\Ccal_X$ is encoded by the stack of $G$-zips. Zhang constructed in \cite{Zhang-EO-Hodge} a natural smooth morphism of stacks $\zeta\colon X\to \GZip^\mu$ which is surjective. For any $\mathbf{k}\in \ZZ^n$, the vector bundle $\Vcal(\mathbf{k})$ can also be defined on $\GZip^\mu$. We define similarly a set $C_{\zip}\subset \ZZ^n$ as the set of $\mathbf{k}\in \ZZ^n$ such that $H^0(\GZip^\mu,\Vcal(\mathbf{k}))\neq 0$ and define $\Ccal_{\zip}$ as the saturation of $C_{\zip}$. Then by pullback via $\zeta$, we clearly have an inclusion $\Ccal_{\zip}\subset \Ccal_X$. We conjectured:
\begin{conj1}[{\cite[Conjecture 2.1.6]{Goldring-Koskivirta-global-sections-compositio}}]
One has $\Ccal_X = \Ccal_{\zip}$.
\end{conj1}
For Hilbert--Blumenthal Shimura varieties, Conjecture 1 was verified by Goldring and the author in \cite{Goldring-Koskivirta-global-sections-compositio} and proved independently by Diamond--Kassaei (\cite[Corollary 8.2]{Diamond-Kassaei-cone-minimal}). In this case, an even stronger result is true: Both cones coincide with the subcone $\Ccal_{\pha}\subset \Ccal_{\zip}$ of partial Hasse invariants (the cone spanned by the weights $\ha_\tau$ defined in the beginning of the introduction). Imai and the author characterized precisely in \cite{Imai-Koskivirta-zip-schubert} the cases when the equality $\Ccal_{\pha}=\Ccal_{\zip}$ holds: those for which the parabolic $P$ is defined over $\FF_p$ and the action of the Frobenius on the roots of $L$ are given by $-w_{0,L}$, where $w_{0,L}$ is the longest element of the Weyl group $W_L$ of $L$. In particular, this condition does not hold for any group of $A_1$-type outside of the Hilbert--Blumenthal case. Imai and the author defined in \loccit another subcone $\Ccal_{\lwsf}\subset \Ccal_{\zip}$, called the lowest weight cone, related to the lowest weight vector of the induced representation $V(\mathbf{k})$. In this article, we show:
\begin{thmd}
Conjecture 1 holds for all Shimura varieties of $A_1$-type. More precisely, one has \[\Ccal_{\lwsf}=\Ccal_{\zip}=\Ccal_X=\Ccal_{E_n}\cap X_{+,L}^*(T).\]
\end{thmd}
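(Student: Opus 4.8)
\emph{Proof idea.} The four sets fit into an a priori chain of inclusions, and the theorem amounts to showing that this chain collapses. Two of the inclusions are already available: $\Ccal_{\lwsf}\subseteq\Ccal_{\zip}$ was proved by Imai and the author in \cite{Imai-Koskivirta-zip-schubert}, while $\Ccal_{\zip}\subseteq\Ccal_X$ comes from pulling back global sections along Zhang's smooth surjective morphism $\zeta\colon X\to\GZip^\mu$ of \cite{Zhang-EO-Hodge}, as recalled in the introduction. Moreover a nonzero global section of $\Vcal(\mathbf{k})$ forces $V(\mathbf{k})=\Ind_B^P(\mathbf{k})\neq 0$, which happens only if $\mathbf{k}$ is $L$-dominant, so $\Ccal_X\subseteq X_{+,L}^*(T)$. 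Finally, Theorem B applied to the open stratum $S=E_n$ — for which $\overline{\Flag}(X)_{E_n}=\Flag(X)$ and $\pi_*\Lcal(\mathbf{k})=\Vcal(\mathbf{k})$ — gives $H^0(\Flag(X),\Lcal(\mathbf{k}))=0$ for $\mathbf{k}\notin\Ccal_{E_n}$, so $C_X\subseteq\Ccal_{E_n}$, and since $\Ccal_{E_n}$ is closed under $\QQ_{\geq 0}$-spans, $\Ccal_X\subseteq\Ccal_{E_n}$. Putting these together,
\[
\Ccal_{\lwsf}\ \subseteq\ \Ccal_{\zip}\ \subseteq\ \Ccal_X\ \subseteq\ \Ccal_{E_n}\cap X_{+,L}^*(T).
\]
Consequently the entire statement — including Conjecture 1 (the Cone Conjecture), which is exactly the equality $\Ccal_{\zip}=\Ccal_X$ — follows once we establish the single reverse inclusion $\Ccal_{E_n}\cap X_{+,L}^*(T)\subseteq\Ccal_{\lwsf}$.

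For that last inclusion the plan is to compare explicit combinatorial descriptions of the two cones at the maximal stratum. Since all the objects involved respect the decomposition of $\GZip^\mu$ along $\sigma$-orbits, we may reduce to the case where $p$ is inert in $\mathbf{F}$; then the ambient Weyl group is $(\ZZ/2\ZZ)^n$, the Frobenius acts on it by a cyclic shift, and $R\subseteq E_n$ records the factors of the Levi $L$ that are the full $\PGL_2$. On one side, Theorem A together with the chain-diagram calculus of section \ref{subsec-chain} presents $\Ccal_{E_n}$ as the intersection of exactly $n$ half-spaces, one $p$-expression with each starting index $s\in E_n$, with an explicit sign pattern (the constant set $\Phi(E_n)$ when $R=\emptyset$, and its analogue for general $R$). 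On the other side, the lowest weight cone $\Ccal_{\lwsf}$ of \cite{Imai-Koskivirta-zip-schubert} is governed by the lowest weight vector $v_{w_{0,L}\mathbf{k}}$ of $V(\mathbf{k})$ and the twist coming from the zip datum; for a group of $A_1$-type this can be unwound by hand into conditions that are again $p$-expressions indexed by $E_n$. The crux is to check that these two systems of inequalities agree over the $L$-dominant chamber: the positivity constraints $k_i\geq 0$ for $i\in R$ should be exactly what is needed to absorb the additional inequalities coming from the $\PGL_2$-factors of $L$, after which the lowest-weight conditions reduce to the defining $p$-expressions of $\Ccal_{E_n}$. I expect this matching — reconciling the recursive chain-diagram picture of $\Ccal_{E_n}$ with the representation-theoretic picture of $\Ccal_{\lwsf}$, and in particular accounting for the $R$-factors — to be the main technical obstacle.

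Once $\Ccal_{E_n}\cap X_{+,L}^*(T)\subseteq\Ccal_{\lwsf}$ is in hand, all four cones coincide; in particular Conjecture 1 holds for Shimura varieties of $A_1$-type, and one obtains the concrete description of $\Ccal_X$ as the $p$-expression cone $\Ccal_{E_n}$ of Theorem A intersected with the $L$-dominant chamber $X_{+,L}^*(T)$.
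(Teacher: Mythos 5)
Your reduction to proving $\Ccal_{E_n}\cap X^*_{+,I}(T)\subseteq\Ccal_{\lwsf}$ is the same one the paper uses, built on exactly the same chain of a priori inclusions (\eqref{all-inclu-w0} together with Theorem \ref{main-thm-hom}). But you stop there and describe the remaining step as a ``matching problem'' that you ``expect to be the main technical obstacle,'' without actually supplying it, so the proposal is incomplete at precisely the point the paper has to work.

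What you are missing is the observation that makes the remaining step almost trivial. For a group of $A_1$-type the parabolic $P_0=\bigcap_{i\in\ZZ}\sigma^i(P)$ collapses to $B$ whenever $R\neq E_n$: each factor missing from $R$ knocks out one more $\PGL_2$ summand of the Levi as $\sigma$ cycles through, so $\bigcap_{i}\sigma^i(P_R)=\bigcap_i P_{\sigma^i(R)}=B$. Consequently $I_0=\emptyset$, $W_{L_0}=\{1\}$, and the sum over $W_{L_0}(\FF_p)$ in the defining inequality \eqref{formula-norm-low} for $\Ccal_{\lwsf}$ disappears. With $\lambda_0=w_{0,I}\lambda$, the inequalities read
\begin{equation}
\sum_{i=0}^{n-1}p^i\,\delta_R^{(i+j)}x_{i+j}\leq 0,\qquad j=1,\dots,n,
\end{equation}
which are precisely $F^{(j)}_R(x)\leq 0$, i.e.\ the defining $p$-expressions of $\Ccal_{E_n}$ (since $\Phi_R(E_n)=R$). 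So $\Ccal_{\lwsf}=\Ccal_{E_n}\cap X^*_{+,I}(T)$ outright, with no need to ``absorb'' extra inequalities from the $\PGL_2$-factors via the $L$-dominance constraints as you anticipate --- the two descriptions are literally the same list. You should also treat the degenerate case $R=E_n$ separately (then $P=G$, $X$ is zero-dimensional, and all four cones are $X^*_+(T)$), which you omit. In short: the skeleton is right, but the argument you would need is not the combinatorial reconciliation you expect; it is the single group-theoretic fact that $P_0=B$ when $R\neq E_n$.
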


To conclude this introduction, we give an overview of each section. In section 2, we review the theory of Shimura varieties, the Ekedahl--Oort stratification, the stack of $G$-zips. Section 3 is dedicated to the various cones that will play a role in this paper, namely the intersection-sum cone $\Ccal^{\cap,+}_S$, the cone of partial Hasse invariants $\Ccal_{\pha,S}$, the zip cone $\Ccal_{\zip}$, etc. In section 4, we introduce the notion of $p$-cones and study their properties (we define the terms $S$-adapted, homogeneous, admissible, positive). Finally, section 5 contains the computations necessary to prove Theorem C, which is the main technical result of the paper. Theorem B and D are derived as consequences of Theorem C.

\section*{Acknowledgements}
We would like to thank Wushi Goldring for useful discussions on several topics related to this article. The key idea of intersection-sum cone stems from joint work of Goldring and the author.

\section{Shimura varieties}

\subsection{Prelimenaries}
Let $(\mathbf{G},\mathbf{X})$ be a Shimura datum of Hodge-type (\cite{Deligne-Shimura-varieties}). In particular, $\mathbf{G}$ is a connected, reductive group over $\QQ$. For each neat compact open subgroup $K\subset \mathbf{G}(\AA_f)$, the Shimura variety $\Sh_K(\mathbf{G},\mathbf{X})$ is a quasi-projective scheme defined over a number field $\mathbf{E}$ (the reflex field of $(\mathbf{G},\mathbf{X})$). Let $p$ be a prime number and $v|p$ a place of $\mathbf{E}$. We say that $p$ is a prime of good reduction for $\Sh_K(\mathbf{G},\mathbf{X})$ if $K$ can be written as $K=K_p K^p$ where $K_p\subset \mathbf{G}(\QQ_p)$ is hyperspecial and $K^p\subset \mathbf{G}(\AA^p_f)$ is compact open. The condition on $K_p$ means that the group $\mathbf{G}_{\QQ_p}$ is unramified and that $K_p=\Gcal(\ZZ_p)$ for some reductive model $\Gcal$ of $\mathbf{G}_{\QQ_p}$ over $\ZZ_p$. In this case, Kisin (\cite{Kisin-Hodge-Type-Shimura}) and Vasiu (\cite{Vasiu-Preabelian-integral-canonical-models}) have shown that for each place $v|p$ in $\mathbf{E}$ there exists a smooth integral model $\Scal_K$ defined over $\Ocal_{\mathbf{E}_v}$ that is canonical (in the sense of Milne). The main object that we study in this paper is the special fiber $S_K\colonequals \Scal_K\otimes_{\Ocal_{\mathbf{E}_v}}\overline{\FF}_p$, which is a smooth, quasi-projective variety over $\overline{\FF}_p$. Madapusi-Pera has constructed smooth toroidal compactifications $\Scal_K^\tor$ of $\Scal_K$ in \cite{Madapusi-Hodge-tor}, for a choice of a sufficiently fine cone decomposition that we omit in the notation.

\subsection{\texorpdfstring{Ekedahl--Oort strata and stack of $G$-zips}{}} 
\label{EO-Gzip-sec}
Fix an algebraic closure $k$ of $\FF_p$. Let $G$ denote the connected reductive $\FF_p$-group $\Gcal\otimes_{\ZZ_p} \FF_p$. Denote by $\varphi \colon G\to G$ the Frobenius homomorphism. The Shimura datum $(\mathbf{G},\mathbf{X})$ induces a conjugacy class of cocharacters of $G_k$. We fix a represententive $\mu\colon \GG_{\mathrm{m},k}\to G_k$. From $\mu$, we obtain a pair of opposite parabolics $P_{\pm}(\mu)$, where  $P_+(\mu)(k)$ (resp. $P_-(\mu)(k)$) consists of the elements $g\in G(k)$ such that the map 
\begin{equation}\label{eq-Pmu}
\GG_{\mathrm{m},k} \to G_{k}; \  t\mapsto\mu(t)g\mu(t)^{-1} \quad (\textrm{resp. } t\mapsto\mu(t)^{-1}g\mu(t))   
\end{equation}
extends to a regular map $\AA_{k}^1\to G_{k}$. The centralizer of $\mu$ is a Levi subgroup $L(\mu)=P_+(\mu)\cap P_-(\mu)$. Define $P\colonequals P_-(\mu)$, $Q\colonequals (P_+(\mu))^{(q)}$, $L\colonequals L(\mu)$ and $M\colonequals L^{(p)}$. Let $\varphi\colon L\to M$ be the Frobenius homomorphism. Define the zip group $E$ by:
\begin{equation}\label{zipgroup}
E \colonequals \{(x,y)\in P\times Q \mid  \varphi(\theta^P_L(x))=\theta^Q_M(y)\}
\end{equation}
where $\theta^P_L\colon P\to L$ denotes the map sending $x\in P$ to its Levi component $\overline{x}\in L$ (and similarly for $\theta^Q_M$). Pink--Wedhorn--Ziegler introduced the stack of $G$-zips of type $\mu$, denoted by $\GZip^\mu$ in \cite[Definition 1.4]{Pink-Wedhorn-Ziegler-F-Zips-additional-structure}. It can be defined as the quotient stack
\begin{equation}
    \GZip^\mu=\left[E\backslash G_k \right].
\end{equation}
where $E$ acts on $G_k$ by $(x,y)\cdot g\colonequals xgy^{-1}$ for all $(x,y)\in E$ and all $g\in G$. One can also interpret it as a moduli stack of certain torsors. It is a smooth stack over $k$ whose underlying topological space is finite. Zhang has shown that there exists a smooth morphism
\begin{equation}\label{zip-map}
    \zeta\colon S_K\to \GZip^\mu.
\end{equation}
This map is also surjective on each connected component of $S_K$, as explained in \cite[\S 1.1.4]{Goldring-Koskivirta-GS-cone}. Goldring and the author showed that this map extends to a morphism $\zeta^{\tor}\colon S_K^{\tor}\to \GZip^\mu$. Furthermore, Andreatta proved in \cite{Andreatta-modp-period-maps} that the extension $\zeta^{\tor}$ is smooth.

The fibers of the map \eqref{zip-map} are called the Ekedahl--Oort strata (EO strata for short) of $S_K$. They are smooth, locally closed subsets of $S_K$. In the case of Shimura varieties of PEL-type, $S_K$ parametrizes abelian varieties endowed with a polarization, an action by a semisimple algebra and a level structure. In this case, two points $x,y\in S_K$ lie in the same EO stratum if and only if the abelian varieties $A_x$, $A_y$ corresponding to $x,y$ satisfy $A_x[p]\simeq A_y[p]$, where we require that the isomorphism is compatible with polarizations and the actions of the semisimple algebra.

\subsection{Parametrization of Ekedahl--Oort strata} \label{param-EO-sec}

Next, we give a parametrization of the points of $\GZip^\mu$. For convenience, we assume that there exists a Borel pair $(B,T)$ defined over $\FF_p$ such that $B\subset P$ and such that $\mu$ factors through $T$ (this can always be achieved after replacing $\mu$ by a conjugate cocharacter). Write $B^+$ for the opposite Borel of $B$, i.e. the unique Borel subgroup such that $B\cap B^+=T$. Let $\Phi^+\subset X^*(T)$ (resp. $\Phi^+_L$) be the set of positive $T$-roots in $G$ (resp. $L$), where we say that a $T$-root $\alpha$ is positive if the $\alpha$-root group $U_\alpha$ is contained in $B^+$. Write $\Delta$ (resp. $I\colonequals \Delta_L$) for the set of simple roots of $G$ (resp. $L$), and denote by $W$ (resp. $W_L$) be the Weyl group of $G$ (resp. $L$). For $\alpha \in \Phi$, let $s_\alpha$ be the corresponding root reflection. Write $\ell(w)$ for the length of an element $w\in W$ and let $\leq$ denote the Bruhat-Chevalley order on $W$. Write $w_0$ (resp. $w_{0,I}$) for the longest element of $W$ (resp. $W_L$). Let ${}^{I} W \subset W$ be the subset of elements $w\in W$ which are of minimal length in the right coset $W_I w$. It is a set of representatatives of the quotient $W_I \backslash W$. The maximal element of ${}^I W$ is $w_{0,I} w_0$. For $w\in W$, fix a representative $\dot{w}\in N_G(T)$ (where $N_G(T)$ is the normalization of $T$), such that $(w_1w_2)^\cdot = \dot{w}_1\dot{w}_2$ whenever $\ell(w_1 w_2)=\ell(w_1)+\ell(w_2)$ (this is possible by choosing a Chevalley system, \cite[ XXIII, \S6]{SGA3}). If no confusion occurs, we simply write $w$ instead of $\dot{w}$.

We write $X^*_+(T)$ for the set of dominant characters. We say that a character $\lambda\in X^*(T)$ is $L$-dominant if $\langle \lambda,\alpha^\vee\rangle \geq 0$ for all $\alpha\in I$. We denoted by $X^*_{+,I}(T)$ the set of $L$-dominant characters. Since $(B,T)$ is defined over $\FF_p$, all objects defined above are endowed with an action of the Galois group $\Gal(k/\FF_p)$. We write $\sigma\in \Gal(k/\FF_p)$ for the $p$-power Frobenius element. We set
\begin{equation}
    z\colonequals \sigma(w_{0,I})w_0.
\end{equation}
Since $\GZip^\mu=[E\backslash G_k]$, the points of the underlying topological space of $\GZip^\mu$ correspond bijectively to the $E$-orbits in $G_k$. Each such orbit is locally closed and smooth. For $w\in {}^I W$ set $G_w\colonequals E\cdot (z^{-1}w)$ (the $E$-orbit of $z^{-1}w$). Then, one has the following:

\begin{theorem}[{\cite[Theorem 7.5]{Pink-Wedhorn-Ziegler-zip-data}}] \label{thm-E-orb-param} \ 
\begin{assertionlist}
\item The map $w\mapsto G_w$ is a bijection from ${}^I W$ onto the set of $E$-orbits in $G_k$.
\item For $w\in {}^I W$, one has $\dim(G_w)= \ell(w)+\dim(P)$.
\item The Zariski closure of $G_w$ is 
\begin{equation}\label{equ-closure-rel}
\overline{G}_w=\bigsqcup_{w'\in {}^IW,\  w'\preccurlyeq w} G_{w'}
\end{equation}
where $\preccurlyeq$ is a certain partial order defined in \cite[\S 3,5]{Pink-Wedhorn-Ziegler-zip-data} that is finer than the Bruhat--Chevalley order. 
\end{assertionlist}
\end{theorem}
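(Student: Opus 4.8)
The plan is to reproduce the classification argument of Pink--Wedhorn--Ziegler, organized around a \emph{frame}. Thanks to our standing assumptions — $(B,T)$ defined over $\FF_p$, $B\subset P$, and $\mu$ factoring through $T$ — the triple $(B,T,\dot z)$ with $z=\sigma(w_{0,I})w_0$ already plays the role of such a frame: it places the isogeny $\varphi\colon L\to M$ and the pair of parabolics $(P,Q)$ in standard position relative to the root datum of $(G,B,T)$. The whole point of the frame is to translate the $E$-action on $G_k$ into combinatorics of the Weyl group via the Bruhat decomposition; one convenient route is to first settle the Borel-type case $P=Q=B$ (where $E$-orbits are governed by $\sigma$-twisted conjugacy and Bruhat cells) and then transfer to general $(P,Q)$ through the identification ${}^IW\cong W_I\backslash W$, although one may also argue directly.

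For assertions (1) and (2), I would, for each $w\in{}^IW$, set $g_w\colonequals \dot z^{-1}\dot w$ and first compute the stabilizer $\Stab_E(g_w)$. Using the frame, this stabilizer is built out of root groups $U_\alpha$ together with a subtorus, and a root-by-root count gives $\dim\Stab_E(g_w)=\dim E-\ell(w)-\dim P=\dim G-\ell(w)-\dim P$ (recall $\dim E=\dim P+\dim U_Q=\dim G$); since $E$ is connected, $G_w=E\cdot g_w$ is irreducible, locally closed, of dimension $\ell(w)+\dim P$, which is (2). Next, injectivity of $w\mapsto G_w$: a suitable relative-position invariant between the flag attached to $P$ and the flag attached to $g$ is constant on $E$-orbits and separates the points $g_w$. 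Finally, exhaustion: given an arbitrary $g\in G_k$, one uses the Bruhat decomposition $G=\bigsqcup_v B\dot vB$ together with the fact that $E$ is full-dimensional to move $g$ into standard form $g_w$; here the surjectivity of the Lang map $\ell\mapsto\ell\,\varphi(\ell)^{-1}$ (valid because $\varphi$ is a Frobenius-type isogeny) is exactly what lets one absorb the Levi/torus contributions. Combining these three points yields (1); alternatively, once the dimension formula and injectivity are in hand, a pure counting argument over $\overline{\FF}_p$ also gives exhaustion.

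For assertion (3), recall the order $\preccurlyeq$ of \cite[\S3,\S5]{Pink-Wedhorn-Ziegler-zip-data}. Since the underlying space of $\GZip^\mu$ is finite, $\overline{G}_w$ is automatically an $E$-stable union of strata, so the task is to pin down which $G_{w'}$ occur. I would carry this out by a rank-one reduction: for a simple reflection $s$, analyse the image/preimage of $G_w$ under the maps associated with the minimal parabolic $P_s$, reducing each step to an explicit $\mathrm{SL}_2$-computation that forces $G_{w'}\subset\overline{G}_w$ whenever $w'$ is obtained from $w$ by one of the elementary moves generating $\preccurlyeq$; an induction on $\ell(w)-\ell(w')$ then gives $\bigsqcup_{w'\preccurlyeq w}G_{w'}\subset\overline{G}_w$, and the reverse inclusion follows from the dimension formula of (2) together with the combinatorial fact that $\preccurlyeq$ has the expected number of predecessors at each length.

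The main obstacle is assertion (3): the order $\preccurlyeq$ is genuinely finer than the Bruhat--Chevalley order, and identifying the correct elementary degeneration moves — and checking that the rank-one ($\mathrm{SL}_2$-type) computations behave uniformly across all root types and are compatible with the Frobenius twist $\sigma$ — is the delicate, case-heavy heart of the proof. The secondary difficulty lies in the exhaustion step of (1), since that is precisely where the Lang-surjectivity input and the reduction from general $(P,Q)$ to the Borel case are indispensable.
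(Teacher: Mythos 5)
This theorem is not proved in the paper: it is quoted verbatim, with attribution, from Pink--Wedhorn--Ziegler \cite{Pink-Wedhorn-Ziegler-zip-data} (Theorem 7.5 for the orbit parametrization and dimension formula, \S\S3,5 for the closure relation), so there is no proof of the authors' own to compare against. Your sketch does reconstruct the broad shape of the PWZ argument --- choosing a frame adapted to $(B,T,\dot z)$, computing the stabilizer of $g_w=\dot z^{-1}\dot w$ root by root (and your identity $\dim E=\dim G$ is correct, since $\dim E = \dim L + \dim R_uP + \dim R_uQ$), invoking Lang--Steinberg surjectivity in the exhaustion step, and establishing closure relations by reducing to rank-one degenerations --- and you rightly flag the two places where the real work lives.

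Two substantive remarks. First, the organizational move you propose, settling the Borel case $P=Q=B$ and then transferring via ${}^IW\simeq W_I\backslash W$, is not how PWZ actually run the induction: they reduce the \emph{zip datum itself} from $G$ with parabolic $P$ to an induced zip datum on the Levi $L$, decreasing the semisimple rank, and the bijectivity, dimension, and closure statements are carried along simultaneously in that induction. Your route is a plausible heuristic but is a genuinely different (and, as stated, incomplete) decomposition. Second, the exhaustion step as you phrase it --- Bruhat decomposition plus $E$ being full-dimensional lets one "move $g$ into standard form" --- is a gap: full-dimensionality gives a dense orbit, not a normal form for every element; one must exploit the specific structure of the $E$-action (the Frobenius twist between the two Levi components) together with the Lang map to absorb the Levi part, and this is precisely the inductive content of PWZ's argument. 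For this paper's purposes the right move is exactly the one the authors make: cite PWZ.
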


In particular, there is a unique open $E$-orbit
$U_\mu\subset G$ corresponding to the longest element $w_{0,I}w_0\in {}^I W$. For each $w\in {}^I W$, we put $\Xcal_w \colonequals [E\backslash G_w]$. It is a locally closed smooth substack of $\GZip^\mu=[E\backslash G_k]$. We obtain a stratification $\GZip^\mu = \bigsqcup_{w\in {}^I W} \Xcal_w$ and the closure relations between strata are given by \eqref{equ-closure-rel}. We also write $\Ucal_\mu\colonequals [E\backslash U_\mu]$ for the unique open stratum and call it the $\mu$-ordinary locus of $\GZip^\mu$, by analogy with Shimura varieties.

\subsection{The flag space}
\label{sec-flag}

Goldring and the author defined the stack of zip flags $\GF^{\mu}$ in \cite[\S2.1]{Goldring-Koskivirta-Strata-Hasse} based on previous work of Ekedahl--van der Geer (\cite{Ekedahl-Geer-EO}) in the case of Siegel-type Shimura varieties. It can be defined as a quotient stack 
\begin{equation}
    \GF^\mu \colonequals [E'\backslash G]
\end{equation}
where $E'=E\cap (B\times G)$. Since $E'\subset E$, there is a natural projection map $\pi \colon \GF^\mu \to \GZip^\mu$ whose fibers are isomorphic to $E/E'\simeq P/B$. Let $X$ be a $k$-scheme endowed with a morphism of stacks $\zeta \colon X\to \GZip^\mu$. Consider the fiber product
\begin{equation} \label{eq-def-flag-space}
\xymatrix@1@M=5pt{
\Flag(X) \ar[r]^-{\zeta_{\flag}} \ar[d]_-{\pi} & \GF^\mu \ar[d]^-{\pi} \\
X \ar[r]_-{\zeta} & \GZip^\mu
}
\end{equation} 
We call $\Flag(X)$ the flag space of $X$ (\cite[\S9.1]{Goldring-Koskivirta-Strata-Hasse}). In the case when $X$ is the special of a Hodge-type Shimura variety and $\zeta$ is Zhang's morphism \eqref{zip-map}, the space $\Flag(X)$ parametrizes pairs $(x,\Fcal_\bullet)$ where $x\in X$ and $\Fcal_\bullet$ is a full flag of $H^1_{\rm dR}(x)$ which refines the Hodge filtration. The map $\pi$ is simply the forgetful map $(x,\Fcal_\bullet)\mapsto x$. Next, we define the flag stratification on the flag space. Set $\Sbt\colonequals [B\backslash G /B]$ and call it the Schubert stack. By the Bruhat decomposition, the points of $\Sbt$ are parametrized by the elements $w\in W$ and are of the form \begin{equation}\label{schub-w}
    \Sbt_w\colonequals \left[ B\backslash BwB / B\right].
\end{equation}
By \cite[\S4.1]{Goldring-Koskivirta-zip-flags}, there is a natural smooth, surjective morphism of stacks
\begin{equation}
\label{psimap}
\Psi\colon \GF^\mu \to \Sbt\colonequals [B\backslash G /B].
\end{equation}
For all $w\in W$, we define $\Fcal_w$ to be the preimage $\Psi^{-1}(\Sbt_w)$ and call it the flag stratum of $w$. It is locally closed and smooth. Explicitly, it is the quotient stack
\begin{equation}
    \Flag_w=[E'\backslash BwBz^{-1}].
\end{equation}
Furthermore, the Zariski closure of $\Fcal_w$ is normal (since the Zariski closure of the Bruhat stratum $BwB$ is normal). Given a pair $(X,\zeta)$ as above, we obtain by pullback via $\zeta_{\flag}$ a locally closed stratification $(\Flag(X)_w)_{w\in W}$ on $\Flag(X)$, where
\begin{equation}
    \Flag(X)_w \colonequals \zeta^{-1}(\Fcal_w).
\end{equation}
In particular, this discussion applies to the pair $(S_K,\zeta)$ where $\zeta\colon S_K\to \GZip^\mu$ is Zhang's morphism \ref{zip-map}. We obtain a flag space $\Flag(S_K)$ attached to $S_K$. When $S_K$ is a Siegel-type Shimura variety, the flag space was first introduced and studied by Ekedahl--van der Geer in \cite{Ekedahl-Geer-EO}.

\subsection{\texorpdfstring{Shimura varieties of $A_1$-type}{}} \label{Shim-A1-type-sec}

We say that a reductive group $\mathbf{G}$ over a field $k$ is of $A_1$-type and rank $n$ if the adjoint group $\mathbf{G}^{\rm ad}_{\overline{k}}$ is a product of $n$ copies of $\PGL_{2,\overline{k}}$. We say that a Shimura variety is of $A_1$-type if it is attached to a reductive $\QQ$-group $\mathbf{G}$ of $A_1$-type. We will usually denote the special fiber of such a variety by the letter $X$, as the letter $S$ will have other use. There are mainly two examples of such Shimura varieties, that we describe below.

\paragraph{Hilbert--Blumenthal Shimura varieties} These varieties are sometimes also called Hilbert modular varieties. In this case, the group $\mathbf{G}$ is defined as follows: Let $\mathbf{F}/\QQ$ be a totally real extension of degree $n\colonequals[\mathbf{F}:\QQ]$. For any $\QQ$-algebra $R$, define
\begin{equation}
    \mathbf{G}(R)\colonequals \{ g\in \GL_2(\mathbf{F}\otimes_{\QQ} R), \ \det(g)\in R^\times \}.
\end{equation}
This defines a connected, reductive $\QQ$-group. It is the preimage of $\GG_{\mathrm{m},\QQ}$ under the natural determinant map $\det\colon \Res_{\mathbf{F}/\QQ}(\GL_{2,\mathbf{F}})\to \Res_{\mathbf{F}/\QQ}(\GG_{\mathrm{m},\mathbf{F}})$. If we write $\Sigma\colonequals \Hom_{\QQ}(\mathbf{F},\RR)$ for the set embeddings $\tau\colon \mathbf{F}\to \RR$, then $\mathbf{G}_{\overline{\QQ}}$ is naturally a subgroup of $\prod_{\tau\in \Sigma}\GL_{2,\overline{\QQ}}$, namely the group of tuples of matrices $(M_\tau)_{\tau\in \Sigma}$ satisfying that $\det(M_\tau)=\det(M_{\tau'})$ for any $\tau,\tau'\in \Sigma$. For a given compact open subgroup $K\subset \mathbf{G}(\AA_f)$, the Hilbert--Blumenthal Shimura variety $\Sh_K(\mathbf{G},\mathbf{X})$ is an $n$-dimensional quasi-projective variety that parametrizes principally polarized abelian varieties of rank $n$ with a compatible action of $\Ocal_{\mathbf{F}}$ and a $K$-level structure. Let $p$ be a prime number that is unramified in $\mathbf{F}$. Then $p$ is prime of good reduction for $\Sh_K(\mathbf{G},\mathbf{X})$ if $K=K_pK^p$ with $K_p\subset \mathbf{G}(\QQ_p)$ hyperspecial. Write $X$ for the special fiber of $\Sh_K(\mathbf{G},\mathbf{X})$ at $p$. The $\FF_p$-group $G$ depends on the ramification of $p$ in $\mathbf{F}$. Assume that $p\Ocal_{\mathbf{F}}$ decomposes as $\pfr_1\dots \pfr_r$ for (distinct) prime ideals $\pfr_i$ in $\Ocal_{\mathbf{F}}$ (recall that $p$ is unramified in $\mathbf{F}$). Writing $\kappa_i = \Ocal_{\mathbf{F}}/\pfr_i$ for the residual field of $\pfr_i$ and $f_i=[\kappa_i:\FF_p]$ for the residual degree of $\pfr_i$, we have $n=\sum_{i=1}^n f_i$ and $G$ is naturally a subgroup of the product
\begin{equation} \label{prod-Weyl}
    \prod_{i=1}^r \Res_{\kappa_i/\FF_p} \GL_{2,\kappa_i}.
\end{equation}
We make the following observation: Let $G_1,G_2$ be reductive groups over $\FF_p$ endowed with cocharacters $\mu_i\colon \GG_{\mathrm{m},k}\to G_{i,k}$ (for $i=1,2$), and assume $G=G_1\times G_2$. Define $\mu=(\mu_1,\mu_2)$. Then the stack $\GZip^\mu$ decomposes naturally as a direct product
\begin{equation}\label{Gzip-prod}
    \GZip^\mu = \GoneZip^{\mu_1}\times \GtwoZip^{\mu_2}.
\end{equation}
Thus, even though the Shimura variety $X$ does not decompose in this way as a direct product, all objects defined from the stack of $G$-zips will naturally decompose according to the $\FF_p$-factors of $G$. This makes it possible to reduce all group-theoretical arguments to the case $r=1$.

\paragraph{Unitary Shimura varieties}
In this case, we consider a CM-extension $\mathbf{E}/\mathbf{F}$ and set $n\colonequals [\mathbf{F}:\QQ]$. We let $\mathbf{V}$ be an $\mathbf{E}$-vector space of dimension $2$, endowed with a hermitian form $\psi\colon \mathbf{V}\times \mathbf{V}\to \mathbf{E}$. Let $\mathbf{G}$ be the group of unitary similitudes of $(\mathbf{V},\psi)$ (viewed as a reductive group over $\QQ$) with similitude factor in $\GG_{\mathrm{m}}$. Fix a CM-type $\{\tau_1,\dots,\tau_n\}\subset \Hom(\mathbf{E},\CC)$. For each embedding $1\leq i \leq n$, the signature of $\mathbf{V}\otimes_{\mathbf{F},\tau_i}\RR$ is $(r_i, 2-r_i)\in \{(0,2), (1,1), (2,0)\}$. The Shimura variety attached to $\mathbf{G}$ parametrizes principally polarized abelian varieties of dimension $2n$, endowed with a compatible action of $\Ocal_{\mathbf{F}}$ and a $K$-level structure (where $K\subset \mathbf{G}(\AA_f)$ is a compact open subgroup). One imposes the compatibility condition that the characteristic polynomial of $\alpha\in \Ocal_F$ acting on the Lie algebra of the abelian variety is given by
\begin{equation}
    \prod_{i=1}^{n}(X-\tau_i(\alpha))^{r_i}(X-\overline{\tau_i(\alpha)})^{2-r_i}
\end{equation}
where $z\mapsto \overline{z}$ denotes complex conjugation.

When $p$ is a prime number that is unramified in $\mathbf{F}$ and $K=K_pK^p$ with $K_p\subset \mathbf{G}(\QQ_p)$ hyperspecial, the Shimura variety $\Sh_K(\mathbf{G},\mathbf{X})$ has good reduction at $p$. The reductive $\ZZ_p$-model $\Gcal$ of $\mathbf{G}_{\QQ_p}$ is given by the choice of a $\Ocal_{\mathbf{E}}\otimes_{\ZZ} \ZZ_p$-lattice in $\mathbf{V}\otimes_{\QQ} \QQ_p$ which is self-dual for the form $\psi$. If we ignore the similitude factor for simplicity and look at the subgroup $\mathbf{G}'\colonequals \U(\mathbf{V},\psi)$ of unitary transformations, then the reduction modulo $p$ of $\mathbf{G}'$ (denoted by $G'$) is isomorphic over $\FF_p$ to a group of the form \eqref{prod-Weyl} appearing in the case of Hilbert--Blumenthal varieties. Again, the decomposition over $\FF_p$ of $G'$ is determined by the ramification of $p$ in $\mathbf{F}$. However, the case of unitary Shimura varieties differs from that of Hilbert--Blumenthal varieties in that the Hodge parabolic $P$ can be larger than the Borel subgroup $B$. Over $k$ we can canonically identify
\begin{equation}
    G'_{k} = \prod_{i=1}^n \GL_{2,k}.
\end{equation}
Then, $P$ (intersected with $G'_k$) decomposes as a product $P=\prod_{i=1}^n P_{i}$ where $P_i = \GL_{2,k}$ for $r_i\in \{0,2\}$ and $P_i$ is the Borel subgroup of lower-triangular matrices in $\GL_{2,k}$ when $r_i=1$. The set of $i\in \{1,\dots,n\}$ such that $r_i\in \{0,2\}$ will be called the parabolic type and will always be denoted by $R$. We will denote by $X$ or $X_R$ the special fiber of such unitary Shimura varieties. The dimension of $X_R$ is given by
\begin{equation}\label{dimXR}
    \dim(X_R)=n-|R|.
\end{equation}
To uniformize our results, we will always work with the reductive $\FF_p$-group \eqref{prod-Weyl} even though it is not exactly the group $G$ that appears in the setting of Hilbert--Blumenthal and unitary Shimura varieties. This change does not affect the stack of $G$-zips and is harmless in the formulation of our results.

\section{\texorpdfstring{Cones of mod $p$ automorphic forms}{}}

In this section, we recall the theory established in \cite{Imai-Koskivirta-zip-schubert} and \cite{Goldring-Koskivirta-GS-cone} regarding the various cones of weights that are naturally attached to a Shimura variety of Hodge-type.

\subsection{Automorphic vector bundles}\label{sec-autom-vect}
We keep the notation introduced in section \ref{param-EO-sec}. For any character $\lambda\in X^*(T)$, consider the induced representation
\begin{equation}
    V_I(\lambda)\colonequals \Ind_B^P(\lambda).
\end{equation}
Note that $V_I(\lambda)=0$ when $\lambda$ is not in the set $X^*_{+,I}(T)$ of $L$-dominant characters. Attached to $V_I(\lambda)$, there is an automorphic vector bundle $\Vcal_I(\lambda)$ defined on the Shimura variety $S_K$ (actually, this vector bundle can even be defined on the integral model $\Scal_K$). In our applications to $A_1$-type Shimura varieties, we will lighten the notation and write simply $\Vcal(\mathbf{k})$ for this vector bundle, where $\mathbf{k}\in \ZZ^n$. The rank of $\Vcal_I(\lambda)$ coincides with the dimension of $V_I(\lambda)$. Elements of the space of global sections
\begin{equation}
    H^0(S_K,\Vcal_I(\lambda))
\end{equation}
will be called mod $p$ automorphic forms of weight $\lambda$ and level $K$. It is natural to ask when this space vanishes. This is the question that has motivated Goldring and the author in a series of papers starting from \cite{Goldring-Koskivirta-global-sections-compositio}. To study this space, it is useful to consider the flag space $\Flag(S_K)$ defined in section \ref{sec-flag}. For each $\lambda\in X^*(T)$, there is a line bundle $\Vcal_{\flag}(\lambda)$ naturally attached to $\lambda$ on $\Flag(S_K)$ (in sections \ref{p-cone-sec} and \ref{coho-van-sec}, this line bundle will be denoted by $\Lcal(\mathbf{k})$ for $\mathbf{k}\in \ZZ^n$ to simplify the notation). If we denote by $\pi\colon \Flag(S_K)\to S_K$ the natural projection, we have
\begin{equation}\label{Vflag-push}
    \pi_{*}(\Vcal_{\flag}(\lambda))=\Vcal_I(\lambda).
\end{equation}
In particular, the space of mod $p$ automorphic forms of level $K$ and weight $\lambda$ identifies with the space of global sections of $\Vcal_{\flag}(\lambda)$ over $\Flag(S_K)$. Since the flag space admits a flag stratification $(\Flag(S_K)_w)_{w\in W}$, it is natural to introduce the following set:
\begin{equation}\label{CKw-def}
    C_{K,w}\colonequals \{\lambda\in X^*(T) \ | \ H^0(\overline{\Flag}(S_K)_w, \Vcal_{\flag}(\lambda)) \neq 0 \}
\end{equation}
where $\overline{\Flag}(S_K)_w$ is the Zariski closure of $\Flag(S_K)_w$ endowed with the reduced structure. We define $\Ccal_{K,w}$ as the saturation of $C_{K,w}$, i.e. the set of $\lambda\in X^*(T)$ that can be spanned over $\QQ_{\geq 0}$ by elements of $C_{K,w}$. We will always use the calligraphic letter $\Ccal$ to denote the saturation of a cone $C\subset X^*(T)$.

One of the motivations to introduce the various sets $\Ccal_{K,w}$ is that one can hope to determine them inductively starting at the elements $w\in W$ of length one and ending at the maximal element $w_0\in W$. For the maximal element, note that $\Flag(S_K)_{w_0}$ is open dense in $\Flag(S_K)$. Hence, since global sections of $\Vcal_{\flag}(\lambda)$ and $\Vcal_I(\lambda)$ coincide by \eqref{Vflag-push}, the set $C_{K,w_0}$ coincides with the following set
\begin{equation}\label{CK-def}
    C_{K}\colonequals \{\lambda\in X^*(T) \ | \ H^0(S_K,\Vcal_I(\lambda)) \neq 0 \}.
\end{equation}
Again, we write $\Ccal_K$ for the saturation of $C_K$. Since $\Vcal_I(\lambda)=0$ when $\lambda$ is not $L$-dominant, we have $\Ccal_K\subset X^*_{+,I}(T)$. However, the cones $\Ccal_{K,w}$ for $w\neq w_0$ are in general not contained in $X^*_{+,I}(T)$.

\subsection{Partial Hasse invariant cones for strata}\label{part-Hasse-sec}

In this section, we explain that to each stratum $\Flag(S_K)_w$, one can naturally attach a subset $\Ccal_{\pha,w}\subset \Ccal_{K,w}$ called the cone of partial Hasse invariants of $w$ (by "cone" we mean an additive monoid with zero). Recall that there is a morphism of stacks $\Psi\colon \GF^\mu\to \Sbt$ whose fibers define the flag stratification $(\Fcal_w)_{w\in W}$. Intuitively, the set $\Ccal_{\pha,w}$ is the set of characters $\lambda\in X^*(T)$ such that $\Vcal_{\flag}(\lambda)$ admits a nonzero section over $\overline{\Fcal}_w$ which arises by pullback from a nonzero section over $\overline{\Sbt}_w$. A much more explicit definition is the following: For each $w\in W$, write $E_w$ for the set of positive roots $\alpha\in \Phi^+$ such that 
\begin{equation}
    ws_\alpha < w \quad \textrm{and} \ \ell(ws_\alpha)=\ell(w)-1.
\end{equation}
Intuitively, the elements $ws_\alpha$ are the lower neighbors of $w$ in the Weyl group $W$ with respect to the Bruha--Chevalley order. Write $X^*_{+,w}(T)$ for the set of characters $\lambda\in X^*(T)$ such that $\langle \lambda, \alpha^\vee \rangle \geq 0$ for all $\alpha \in E_w$.

\begin{definition}[{\cite[\S 2.3]{Goldring-Koskivirta-GS-cone}}] \label{def-Cphaw}
We define the cone $C_{\pha,w}$ of $w$ as the direct image of the set $X^*_{+,w}(T)$ under the map
\begin{equation}
    h_w\colon X^*(T)\to X^*(T), \quad \lambda \mapsto -w\lambda + p w_{0,I}w_0 \sigma^{-1}(\lambda).
\end{equation}
Moreover, we write $\Ccal_{\pha,w}$ for the saturation of $C_{\pha,w}$ and call it the cone of partial Hasse invariants of $w$.
\end{definition}
The intuitive interpretation of $\Ccal_{\pha,w}$ explained in the beginning of this paragraph implies immediately that $\Ccal_{\pha,w}$ is contained in $\Ccal_{K,w}$. When $w=w_0$, we simply write $\Ccal_{\pha}$ for the cone $\Ccal_{\pha,w_0}$.

\begin{rmk} \label{rmk-triv-S}
If $\chi\in X^*(T)$ satisfies $\langle \chi,\alpha^\vee\rangle=0$ for all $\alpha\in E_w$, then if we write $\lambda=h_w(\chi)$, the line bundle $\Vcal_{\flag}(\lambda)$ admits a nowhere-vanishing section (see \cite[\S 2.3]{Goldring-Koskivirta-GS-cone}), and thus is trivial on $\overline{\Flag}(S_K)_w$.    
\end{rmk}

\subsection{Intersection-sum cones}

Next, we attach to each element $w\in W$ another natural cone, called the intersection-sum cone, denoted by $\Ccal_{w}^{\cap,+}$. It was first introduced by Goldring and the author in \cite{Goldring-Koskivirta-divisibility, Goldring-Koskivirta-GS-cone}. The set $\Ccal_{w}^{\cap,+}$ is defined inductively on the length of $w$ as follows: When $\ell(w)=1$, we simply set $\Ccal_{w}^{\cap,+}=\Ccal_{\pha,w}$. Then, for any $w\in W$ of length $\geq 2$, define
\begin{equation}
    \Ccal_{w}^{\cap,+} = \Ccal_{\pha,w} \ +_{\rm sat} \ \bigcap_{\alpha \in E_w} \Ccal_{w s_\alpha}^{\cap,+} 
\end{equation}
Here, for two cones $\Ccal_1, \Ccal_2\subset X^*(T)$, we denote by $\Ccal_1 +_{\rm sat} \Ccal_2$ the saturation of the sum $\Ccal_1 + \Ccal_2$. The definition of $\Ccal_w^{\cap,+}$ combines information about $w$ and of its lower neighbors. To explain the relevance of this definition, we make the following assumption for simplicity:
\begin{assumption}\label{assume-hasse}
For all $w\in W$, the elements $\alpha^\vee$ are linearly independent over $\QQ$ in $X_*(T)\otimes_{\ZZ} \QQ$.
\end{assumption}
This assumption is rarely satisfied. However, for general groups, one can slightly alter the definition of $\Ccal_{w}^{\cap,+}$ to remove the need of Assumption \ref{assume-hasse} (this is work in progress with Goldring \cite{Goldring-Koskivirta-orthogonal}). However, note the above assumption is satisfied in the case when $G$ is an $A_1$-type reductive group. The main property of intersection-sum cones is the following:

\begin{theorem}[{\cite[Theorem 2.3.8]{Goldring-Koskivirta-divisibility}}]\label{thm-sep-syst}
Under Assumption \ref{assume-hasse}, one has for all $w\in W$ an inclusion
\begin{equation}
    \Ccal_{K,w} \subset \Ccal_{w}^{\cap,+}.
\end{equation}
\end{theorem}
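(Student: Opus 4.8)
\noindent\emph{Proof plan.}
The plan is to argue by induction on $\ell(w)$. The base cases $\ell(w)\le 1$ I would treat by hand: the minimal flag stratum $\Flag(S_K)_e$ is $0$-dimensional (its closure is itself), so $\Vcal_{\flag}(\nu)$ carries a nonzero section there precisely when it is trivial, a condition one computes explicitly on the corresponding point-stack; this pins down $C_{K,e}$. For $\ell(w)=1$ one then expands a section of $\Vcal_{\flag}(\lambda)$ over $\overline{\Flag}(S_K)_w$ as a polynomial in the unique partial Hasse invariant of $w$ and reduces to the $\ell(w)=0$ computation to conclude $\Ccal_{K,w}\subseteq\Ccal_{\pha,w}=\Ccal_w^{\cap,+}$.

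For the inductive step, fix $w$ with $\ell(w)\ge 2$, let $\lambda\in C_{K,w}$, and choose a nonzero global section $s$ of $\Vcal_{\flag}(\lambda)$ on $\overline{\Flag}(S_K)_w$. First I would recall that, by pulling back the Bruhat stratification of the Schubert stack $\Sbt$ along $\Psi\circ\zeta_{\flag}$ (cf.\ \cite{Goldring-Koskivirta-zip-flags}), the closure $\overline{\Flag}(S_K)_w=\bigsqcup_{w'\le w}\Flag(S_K)_{w'}$ is normal and its boundary divisors are exactly the reduced prime divisors $\overline{\Flag}(S_K)_{ws_\alpha}$ with $\alpha\in E_w$. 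Next, using the construction of partial Hasse invariants on flag spaces (\cite{Goldring-Koskivirta-Strata-Hasse, Goldring-Koskivirta-GS-cone}) together with Assumption \ref{assume-hasse} to pick, for each $\alpha\in E_w$, a character $\chi_\alpha\in X^*(T)$ with $\langle\chi_\alpha,\beta^\vee\rangle=\delta_{\alpha,\beta}$ for all $\beta\in E_w$, one gets sections $\Ha_{w,\alpha}\in H^0(\overline{\Flag}(S_K)_w,\Vcal_{\flag}(h_w(\chi_\alpha)))$ with $\Div(\Ha_{w,\alpha})=\overline{\Flag}(S_K)_{ws_\alpha}$.

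Now let $m_\alpha\ge 0$ be the order of vanishing of $s$ along $\overline{\Flag}(S_K)_{ws_\alpha}$. Since $\overline{\Flag}(S_K)_w$ is normal, $s'\colonequals s/\prod_{\alpha\in E_w}\Ha_{w,\alpha}^{m_\alpha}$ is again a global section, of $\Vcal_{\flag}(\lambda')$ with $\lambda'\colonequals\lambda-\sum_{\alpha\in E_w}m_\alpha h_w(\chi_\alpha)$, and $\Div(s')$ contains no boundary component; hence for every $\alpha\in E_w$ the restriction $s'|_{\overline{\Flag}(S_K)_{ws_\alpha}}$ is nonzero, so $\lambda'\in C_{K,ws_\alpha}\subseteq\Ccal_{K,ws_\alpha}$, and the inductive hypothesis (applicable since $\ell(ws_\alpha)=\ell(w)-1$) gives $\lambda'\in\Ccal_{ws_\alpha}^{\cap,+}$; thus $\lambda'\in\bigcap_{\alpha\in E_w}\Ccal_{ws_\alpha}^{\cap,+}$. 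On the other hand, since $h_w$ is $\ZZ$-linear and $\sum_{\alpha\in E_w}m_\alpha\chi_\alpha\in X^*_{+,w}(T)$, one has $\sum_{\alpha\in E_w}m_\alpha h_w(\chi_\alpha)=h_w\big(\sum_{\alpha\in E_w}m_\alpha\chi_\alpha\big)\in C_{\pha,w}\subseteq\Ccal_{\pha,w}$. Adding, $\lambda=\lambda'+\sum_{\alpha\in E_w}m_\alpha h_w(\chi_\alpha)\in\Ccal_{\pha,w}+\bigcap_{\alpha\in E_w}\Ccal_{ws_\alpha}^{\cap,+}\subseteq\Ccal_w^{\cap,+}$. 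As $\Ccal_w^{\cap,+}$ is saturated and $\Ccal_{K,w}$ is the saturation of $C_{K,w}$, this yields $\Ccal_{K,w}\subseteq\Ccal_w^{\cap,+}$, completing the induction.

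The main obstacle is the geometric input invoked in the second paragraph: that on each $\overline{\Flag}(S_K)_w$ the boundary component $\overline{\Flag}(S_K)_{ws_\alpha}$ is a reduced Cartier divisor cut out by a section of $\Vcal_{\flag}$ of the exact weight $h_w(\chi_\alpha)$, vanishing along no other boundary component — in particular with multiplicity one. This rests on the description of the flag stratification as a smooth pullback of the Bruhat stratification of $\Sbt$, on the normality of Schubert-cell closures, and on Assumption \ref{assume-hasse} through the existence of the dual characters $\chi_\alpha$ (removing the latter is exactly what forces the refined formalism mentioned after Assumption \ref{assume-hasse}). A secondary point needing care, though elementary, is the base case $\ell(w)\le 1$: the direct analysis of the zero- and one-dimensional bottom strata and the explicit description of $C_{K,e}$.
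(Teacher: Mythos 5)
The paper does not actually prove Theorem \ref{thm-sep-syst}: it is imported verbatim from \cite[Theorem 2.3.8]{Goldring-Koskivirta-divisibility}, so there is no internal proof to compare against. Your proposal reconstructs what is essentially the argument of that reference --- factor out of a nonzero section the maximal powers of the partial Hasse invariants cutting out the codimension-one boundary strata of $\overline{\Flag}(S_K)_w$, observe that the quotient restricts nontrivially to each lower neighbour, and induct on $\ell(w)$ --- and the overall strategy is sound. Two points need more care than you give them. First, Assumption \ref{assume-hasse} only provides characters $\chi_\alpha\in X^*(T)$ with $\langle\chi_\alpha,\beta^\vee\rangle=N_\alpha\delta_{\alpha\beta}$ for some integers $N_\alpha\geq 1$ (linear independence over $\QQ$ does not give a dual basis inside the lattice), so $\Ha_{w,\alpha}$ vanishes along $\overline{\Flag}(S_K)_{ws_\alpha}$ to order $N_\alpha$, not necessarily $1$; if $N_\alpha\nmid m_\alpha$ your quotient $s'$ either fails to be regular or still vanishes identically on that divisor, and the restriction step breaks. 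The standard repair is to replace $s$ by $s^{N}$ with $N=\operatorname{lcm}_\alpha(N_\alpha)$, divide by $\prod_\alpha\Ha_{w,\alpha}^{Nm_\alpha/N_\alpha}$, and conclude $N\lambda\in\Ccal_w^{\cap,+}$, which suffices because $\Ccal_w^{\cap,+}$ is saturated. Second, the base case $\ell(w)=1$ cannot be reduced to $\ell(w)=0$ in the way you suggest: since $E_e=\emptyset$, both $\Ccal_{K,e}$ and $\Ccal_{\pha,e}$ are all of $X^*(T)$, so the divisibility mechanism yields nothing there. One genuinely needs that $\overline{\Flag}(S_K)_w$ is a \emph{proper} curve (properness of the low-dimensional strata closures is a nontrivial input) on which $\deg\Vcal_{\flag}(h_w(\chi))$ is a positive multiple of $\langle\chi,\alpha^\vee\rangle$ for $E_w=\{\alpha\}$, so that the existence of a nonzero section forces $\langle\chi,\alpha^\vee\rangle\geq 0$, i.e.\ $\Ccal_{K,w}\subset\Ccal_{\pha,w}$. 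With these two repairs your argument is complete and agrees with the cited source.
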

In other words, this theorem says that the space $H^0(\overline{\Flag}(S_K)_w,\Vcal_{\flag}(\lambda))$ is zero whenever $\lambda$ lies in the complement of $\Ccal_{w}^{\cap,+}$. Let us consider the case $w=w_0$. By Theorem \ref{thm-sep-syst}, we have an inclusion $\Ccal_{K,w_0} \subset \Ccal_{w_0}^{\cap,+}$. Also, recall that $\Ccal_{K,w_0}$ coincides with $\Ccal_K$ defined in section \ref{sec-autom-vect}. Furthermore, we know that $\Ccal_K$ is contained in the set of $L$-dominant characters. Thus, we deduce the more precise inclusion:
\begin{equation}\label{precise-inclu-w0}
    \Ccal_K\subset \Ccal_{w_0}^{\cap,+} \cap X^*_{+,I}(T).
\end{equation}

\subsection{The zip cone}
\label{sec-zipcone}

So far we have attached several cones to each flag stratum $\Flag(S_K)_w$ for $w\in W$, satisfying certain inclusions. Although we will not use it in full, we mention here one more natural construction that one could consider in this setting. In the definition of the cones $\Ccal_{K,w}$ and $\Ccal_K$ (see \eqref{CKw-def}, \eqref{CK-def}), one could replace the flag stratum $\Flag(S_K)_w$ and the scheme $S_K$ by their "group-theoretical counterparts", namely the stratum $\Fcal_w$ and the stack $\GZip^\mu$ respectively. By modifying in this way the cone $\Ccal_{K,w}$, we obtain another cone, that we denote by $\Ccal_{\flag,w}$. Similarly, the modified version of $\Ccal_K$ is denoted by $\Ccal_{\zipsf}$ and is called the zip cone. Explicitly, $\Ccal_{\flag,w}$ and $\Ccal_{\zipsf}$ are respectively the saturations of the following two sets:
\begin{align*}\label{Cflagw-def}
    C_{\flag,w}&\colonequals \{\lambda\in X^*(T) \ | \ H^0(\overline{\Fcal}_w, \Vcal_{\flag}(\lambda)) \neq 0 \} \\ 
    C_{\zip}&\colonequals \{\lambda\in X^*(T) \ | \ H^0(\GZip^\mu,\Vcal_I(\lambda)) \neq 0 \}.
\end{align*}
If we denote by $\rho_\lambda\colon P\to \GL_k(V_I(\lambda))$ the homomorphism given by the representation $V_I(\lambda)$, the space $H^0(\GZip^\mu,\Vcal_I(\lambda))$ is the $k$-vector space of regular maps $f\colon G_k\to V_I(\lambda)$ satisfying the condition
\begin{equation}\label{zip-form-eq}
f(axb^{-1})=\rho_\lambda(a)f(x)
\end{equation}
for all $(a,b)\in E$ and all $x\in G_k$. Under Assumption \ref{assume-hasse}, we have inclusions
\begin{equation}\label{inclu-w}
    \Ccal_{\pha,w} \ \subset \ \Ccal_{\flag,w} \ \subset \ \Ccal_{K,w} \ \subset \Ccal_{w}^{\cap,+}.
\end{equation}
Similarly to \eqref{Vflag-push}, the equation $\pi_*(\Vcal_{\flag}(\lambda))=\Vcal_I(\lambda)$ holds also for the natural projection map $\pi\colon \GF^\mu \to \GZip^\mu$. Therefore, similarly to the equality $\Ccal_{K,w_0}=\Ccal_K$, we have $\Ccal_{\flag,w_0}=\Ccal_{\zipsf}$. Hence, for $w=w_0$, we have inclusions
\begin{equation}\label{all-inclu-w0}
    \Ccal_{\pha} \ \subset \ \Ccal_{\zipsf} \ \subset \ \Ccal_{K} \ \subset \Ccal_{w_0}^{\cap,+}\cap X^*_{+,I}(T).
\end{equation}
Goldring and the author proposed in \cite{Goldring-Koskivirta-global-sections-compositio}[Conjecture 2.1.6] the following conjecture, that we refer to simply as the Cone Conjecture:
\begin{conjecture}\label{conj-cones}
For any Hodge-type Shimura variety, one has $\Ccal_{K} = \Ccal_{\zipsf}$. 
\end{conjecture}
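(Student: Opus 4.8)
\emph{Strategy.} The plan is to reduce the Cone Conjecture to a statement that involves no Shimura variety at all, only the stack of $G$-zips. One inclusion is automatic: since Zhang's morphism $\zeta\colon S_K\to\GZip^\mu$ is surjective on each connected component of $S_K$, pullback along $\zeta$ is injective on global sections, so $\Ccal_{\zipsf}\subset\Ccal_K$; this is already recorded in \eqref{all-inclu-w0}. The content of the conjecture is thus the reverse inclusion $\Ccal_K\subset\Ccal_{\zipsf}$. Now \eqref{all-inclu-w0}, which is Theorem \ref{thm-sep-syst} applied to $w=w_0$ combined with the $L$-dominance of $\Ccal_K$, already yields $\Ccal_K\subset\Ccal_{w_0}^{\cap,+}\cap X^*_{+,I}(T)$. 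Hence it suffices to prove the purely group-theoretic inclusion
\[
\Ccal_{w_0}^{\cap,+}\cap X^*_{+,I}(T)\ \subset\ \Ccal_{\zipsf}.
\]
Since the Imai--Koskivirta lowest-weight cone satisfies $\Ccal_{\lwsf}\subset\Ccal_{\zipsf}$ (\cite{Imai-Koskivirta-zip-schubert}), a clean way to package this is to establish the slightly stronger $\Ccal_{w_0}^{\cap,+}\cap X^*_{+,I}(T)\subset\Ccal_{\lwsf}$; together with $\Ccal_{\lwsf}\subset\Ccal_{\zipsf}\subset\Ccal_K\subset\Ccal_{w_0}^{\cap,+}\cap X^*_{+,I}(T)$ this forces all four cones to coincide, which is more precise than the conjecture and recovers the shape of Theorem D in the $A_1$-type case.

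\emph{Reductions.} First I would reduce to the case where the adjoint group of $G$ is simple over $\overline{\FF}_p$, hence of the form $\Res_{\FF_{p^r}/\FF_p}$ of an absolutely simple group: by \eqref{Gzip-prod} the stack $\GZip^\mu$ splits as a product over the $\FF_p$-simple factors of $G$, and all four cones in play ($\Ccal_{w_0}^{\cap,+}$, $\Ccal_{\zipsf}$, $\Ccal_{\lwsf}$, $X^*_{+,I}(T)$) decompose compatibly; this mirrors the reduction to a single $\sigma$-orbit used throughout the $A_1$-type argument. Second, Assumption \ref{assume-hasse}, on which Theorem \ref{thm-sep-syst} is stated and which holds only in $A_1$-type, should be disposed of by passing to the modified intersection-sum construction of \cite{Goldring-Koskivirta-orthogonal}, for which the analogue of Theorem \ref{thm-sep-syst} is expected without that hypothesis. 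With this in hand one would proceed type by type (the minuscule cocharacter data of types $A_n$, $B_n$, $C_n$, $D_n$, $E_6$, $E_7$, together with the Frobenius twist carried by $z=\sigma(w_{0,I})w_0$): write out the generating inequalities of $X^*_{+,w}(T)$ and the linear maps $h_w$ of Definition \ref{def-Cphaw}, then compute the cones $\Ccal_{w}^{\cap,+}$ by induction along the Bruhat order, exactly as in section \ref{subsec-chain} but with the combinatorics of $W_I\backslash W$ and of the lower-neighbour sets $E_w$ for the ambient root system.

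\emph{Main obstacle.} The crux is the group-theoretic inclusion $\Ccal_{w_0}^{\cap,+}\cap X^*_{+,I}(T)\subset\Ccal_{\zipsf}$ (or $\subset\Ccal_{\lwsf}$) outside the $A_1$ case, and there are two independent difficulties. The first is that the intersection-sum construction need not be sharp in higher rank: $\Ccal_{w_0}^{\cap,+}$ is built only from partial Hasse invariants and from intersections of the cones of lower neighbours, and a priori it may strictly contain $\Ccal_{\zipsf}$. If that happens the separating-system vanishing of Theorem \ref{thm-sep-syst} is too coarse, and one must supply a genuinely finer input --- either additional Hasse-type non-vanishing sections on the flag strata, or a global argument on $\Flag(S_K)$ that does not reduce to codimension-one restrictions. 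The second difficulty is combinatorial: the $A_1$-type proof encodes the answer in the ad hoc language of $p$-cones, chain diagrams and the positivity condition of section \ref{p-cone-sec}, and extending this bookkeeping to arbitrary Weyl groups --- where $W_I\backslash W$, the sets $E_w$ and the $\sigma$-action are all much more intricate --- is the real technical barrier. One expects $\Ccal_{\zipsf}$ to be cut out in general by "twisted $p$-adic" inequalities indexed by the minimal-length coset representatives, with leading coefficients dictated by the Hodge parabolic, and the task is precisely to show that $\Ccal_{w_0}^{\cap,+}$ meets the $L$-dominant cone exactly in that region. No new geometric input is required: the smoothness and surjectivity of $\zeta$ and $\zeta^{\tor}$ due to Zhang and Andreatta (section \ref{EO-Gzip-sec}), and the flag-space formalism of section \ref{sec-flag}, already apply to every Hodge-type Shimura variety, PEL or not, so the entire remaining problem is the group theory above.
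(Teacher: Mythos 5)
The statement you are proving is a \emph{conjecture} in the paper: the paper itself offers no proof of it for general Hodge-type Shimura varieties, and establishes it only for varieties of $A_1$-type (Theorem \ref{thm-cone-conj}). Your proposal is therefore being measured against a result the paper does not claim, and as written it is a programme rather than a proof: the step you yourself label the ``main obstacle'' --- the purely group-theoretic inclusion $\Ccal_{w_0}^{\cap,+}\cap X^*_{+,I}(T)\subset\Ccal_{\zipsf}$ (or $\subset\Ccal_{\lwsf}$) --- is exactly where all the content lies, and you do not supply it. Worse, this step is not merely hard but possibly false as stated: you correctly observe that $\Ccal_{w_0}^{\cap,+}$ is built only from partial Hasse invariants and intersections over lower neighbours and ``a priori may strictly contain $\Ccal_{\zipsf}$''; if that happens for some group (the paper's own list of verified cases conspicuously excludes signature $(2,2)$ at an inert prime, for instance), your sandwich does not close and the strategy must be replaced, not just completed. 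A second unproven ingredient is your use of Theorem \ref{thm-sep-syst}, which is stated under Assumption \ref{assume-hasse}; the paper explicitly says this assumption is ``rarely satisfied'' and that the fix is work in progress (\cite{Goldring-Koskivirta-orthogonal}), so you cannot invoke it for arbitrary Hodge type.

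That said, your skeleton is precisely the one the paper uses where it does prove the conjecture. In the $A_1$-type case the argument is the chain $\Ccal_{\lwsf}\subset\Ccal_{\zipsf}\subset\Ccal_X\subset\Ccal_{E_n}\cap X^*_{+,I}(T)$ together with the concrete verification that the two ends coincide: there $P_0=B$, so $W_{L_0}=\{1\}$ and the lowest-weight inequalities \eqref{formula-norm-low} are literally the $p$-expressions cutting out $\Ccal_{E_n}$, while the upper bound $\Ccal_X\subset\Ccal_{E_n}\cap X^*_{+,I}(T)$ is the hard computation of Theorem \ref{main-thm-hom}. If you want to salvage your write-up, restrict the claim to $A_1$-type and replace the type-by-type speculation with those two computations; for the general conjecture, you would need either a proof that the (suitably modified) intersection-sum cone meets $X^*_{+,I}(T)$ inside $\Ccal_{\zipsf}$, or a genuinely new upper bound on $\Ccal_K$ when it does not.
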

In other words, this conjecture states that the vanishing of the degree zero cohomology of automorphic vector bundles is encoded by the stack of $G$-zips. It was checked in \cite{Koskivirta-automforms-GZip} that the cone $\Ccal_{K}$ is independent of the level $K$ (however the cone $C_K$ does depend on $K$). Thus it is not unreasonable to expect that such an equality holds. The above conjecture was proved for several Shimura varieties in \cite{Goldring-Koskivirta-global-sections-compositio, Goldring-Koskivirta-divisibility}: Hilbert--Blumenthal varieties, Siegel modular varieties of rank $2$ and $3$, unitary Shimura varieties of rank $\leq 4$ (with the exception of the case of signature $(2,2)$ at an inert prime). The case of Hilbert--Blumenthal varieties was also proved by Diamond--Kassaei (\cite[Corollary 8.2]{Diamond-Kassaei-cone-minimal}).

One of the results of this paper is the following:

\begin{theorem}
Conjecture \ref{conj-cones} holds for all Hodge-type Shimura varieties of $A_1$-type.
\end{theorem}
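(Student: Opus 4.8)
The plan is to establish the full chain of equalities announced in Theorem~D,
\[
\Ccal_{\lwsf}\;=\;\Ccal_{\zipsf}\;=\;\Ccal_K\;=\;\Ccal_{E_n}\cap X^*_{+,I}(T),
\]
of which Conjecture~\ref{conj-cones} is the middle equality $\Ccal_{\zipsf}=\Ccal_K$ (equivalently $\Ccal_{\zip}=\Ccal_X$, since $\Ccal_{K,w_0}=\Ccal_K$ and $\Ccal_{\flag,w_0}=\Ccal_{\zipsf}$ by section~\ref{sec-zipcone}). Three of the four inclusions are already in hand. The chain $\Ccal_{\zipsf}\subset\Ccal_K\subset\Ccal^{\cap,+}_{w_0}\cap X^*_{+,I}(T)$ is \eqref{all-inclu-w0}, where $w_0$ is the longest element of $W$ and corresponds to the open flag stratum $S=E_n$; applying Theorem~C to this stratum gives $\Ccal^{\cap,+}_{w_0}=\Ccal_{E_n}$, whence $\Ccal_K\subset\Ccal_{E_n}\cap X^*_{+,I}(T)$. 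On the other side, Imai and the author proved in \cite{Imai-Koskivirta-zip-schubert} that the lowest weight cone is a subcone of the zip cone, $\Ccal_{\lwsf}\subset\Ccal_{\zipsf}$. Hence the whole theorem reduces to the single reverse inclusion $\Ccal_{E_n}\cap X^*_{+,I}(T)\subset\Ccal_{\lwsf}$; once this is proved, all four cones coincide and Theorem~D (hence Conjecture~\ref{conj-cones}) follows.

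First I would reduce to the inert case: since $\GZip^\mu$, and with it each of $\Ccal_{\lwsf}$, $\Ccal_{\zipsf}$ and the combinatorial cone $\Ccal_{E_n}$, decomposes as a direct product over the $\sigma$-orbits in $\Sigma$ (see \eqref{Gzip-prod} and section~\ref{Shim-A1-type-sec}), one may assume $p$ inert in $\mathbf{F}$, so that $G_k=\prod_{i=1}^n\GL_{2,k}$ with Frobenius acting by a cyclic shift and parabolic type $R\subset E_n$. In this concrete situation $X^*(T)$ is identified with $\ZZ^n$, and the cocharacter $\mu$, the Weyl elements $w_{0,I}$ and $w_0$, and the twisting map $h_{w_0}$ of Definition~\ref{def-Cphaw} all become explicit. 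I would then write out the explicit description of the lowest weight cone from \cite{Imai-Koskivirta-zip-schubert}: $\Ccal_{\lwsf}$ is the set of $L$-dominant characters $\lambda$ for which the lowest weight vector of $V_I(\lambda)$, of weight $w_{0,I}\lambda$, produces a nonzero zip form on $\GZip^\mu$ satisfying \eqref{zip-form-eq}. Concretely this is cut out inside $X^*_{+,I}(T)$ by finitely many inequalities of the $p$-expression shape \eqref{pexpr}, with starting indices running over $E_n$.

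The heart of the argument --- and the step I expect to be the main obstacle --- is to match these lowest-weight inequalities with the $p$-expressions defining $\Ccal_{E_n}$. When $R=\emptyset$ the Hodge parabolic coincides with the Borel, $V_I(\lambda)$ is one-dimensional, and the statement reduces to the known Hilbert--Blumenthal case: there $\Ccal_X=\Ccal_{\zip}=\Ccal_{\pha}$, and $\Ccal_{\pha}$ is cut out by the inequalities $x_s+px_{s+1}+\dots+p^{n-1}x_{s-1}\leq 0$, which is exactly the homogeneous $E_n$-adapted $p$-cone attached to the empty pattern $\Phi(E_n)=\emptyset$, i.e. $\Ccal_{E_n}$. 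For general $R$ one must in addition keep track of the $\GL_2$-factors of $P$ indexed by $R$ and verify, by the chain-diagram calculus of section~\ref{subsec-chain}, that the lowest-weight inequalities reproduce the homogeneous $E_n$-adapted pattern of $\Ccal_{E_n}$ once intersected with $L$-dominance. What makes this comparison rigid rather than accidental is the uniqueness clause of Theorem~A: it suffices to check that the family of lowest-weight inequalities is positive, admissible and homogeneous in the sense of section~\ref{p-cone-sec} --- admissibility being essentially automatic because the lowest weight section extends over the closure of the stratum --- and uniqueness then forces the cone they cut out inside $X^*_{+,I}(T)$ to be exactly $\Ccal_{E_n}\cap X^*_{+,I}(T)$, which is the required inclusion. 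The genuinely delicate point is the general-$R$ bookkeeping and the correct normalization of the Frobenius action on the roots of $L$, so that the lowest-weight inequalities come out in precisely the $p$-expression form \eqref{pexpr} and match the chain-diagram description of $\Ccal_{E_n}$ term by term; once this is done, the four-term chain collapses to equalities and we obtain Conjecture~\ref{conj-cones} together with its sharpening Theorem~D.
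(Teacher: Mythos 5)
Your overall strategy is the same as the paper's: reduce to the inert case, use \eqref{all-inclu-w0} and Theorem C to get the chain
\[
\Ccal_{\lwsf}\subset\Ccal_{\zipsf}\subset\Ccal_K\subset\Ccal_{E_n}\cap X^*_{+,I}(T),
\]
and then close the loop by showing $\Ccal_{E_n}\cap X^*_{+,I}(T)\subset\Ccal_{\lwsf}$ by explicitly matching the inequalities defining $\Ccal_{\lwsf}$ with those defining $\Ccal_{E_n}$. This is exactly what the paper does in section~\ref{sec-zip-proof}.

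However, there is a genuine gap in how you propose to carry out the final matching. You identify the ``general-$R$ bookkeeping'' and ``keeping track of the $\GL_2$-factors of $P$ indexed by $R$'' as the delicate point, and you expect a nontrivial comparison mediated by chain diagrams and the uniqueness clause of Theorem~A. This anticipation is misdirected and would send you down a harder path than necessary. The crucial observation you are missing is that the lowest-weight inequality \eqref{formula-norm-low} involves the parabolic $P_0=\bigcap_{i\in\ZZ}\sigma^i(P)$, not $P$ itself, and in the inert $A_1$-type case $P_0=\bigcap_{i}P_{\sigma^i(R)}$ collapses to the Borel $B$ for every parabolic type $R\neq E_n$. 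Consequently $L_0=T$, $W_{L_0}=\{1\}$, and the double sum in \eqref{formula-norm-low} degenerates to a single $p$-expression for each simple root $\alpha_j$; with $I_0=\emptyset$ one has $\lambda_0=w_{0,I}\lambda$, and the resulting $n$ inequalities are literally $\sum_{i=0}^{n-1}p^i\delta_R^{(i+j)}x_{i+j}\leq 0$, i.e.\ the defining inequalities of $\Ccal_{E_n}$ (homogeneous with pattern $T=\Phi_R(E_n)=R$). The $\GL_2$-factors of $P$ indexed by $R$ never enter, because none of them survive in $P_0$. You should also treat separately the degenerate case $R=E_n$, where $P=G$, $P_0=G$, $X$ is zero-dimensional, and all four cones equal $X^*_+(T)$; your chain-diagram/uniqueness argument would not apply there since the flag strata sit over a point. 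Finally, your remark that ``admissibility is essentially automatic because the lowest weight section extends over the closure of the stratum'' is reasoning about the wrong thing: admissibility of an $E_n$-adapted $p$-cone is automatic simply because $\KK_{E_n}=0$ (Corollary~\ref{cor-S-adap-KC}), not because of any extension property of sections.
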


We give the proof of this theorem in section \ref{sec-zip-proof}. It requires the use of a family of mod $p$ automorphic forms attached to the lowest weight vectors of the representations $V_I(\lambda)$ (see section \ref{low-cone-sec}). We explain below a strategy to prove Conjecture \ref{conj-cones} using the intersection-sum cones (under Assumption \ref{assume-hasse}). In view of the inclusions \eqref{all-inclu-w0}, it clearly suffices to show that $\Ccal_{\zipsf}$ coincides with $\Ccal_{w_0}^{\cap,+}\cap X^*_{+,I}(T)$. We will show that this is true when $G$ is a group of $A_1$-type. One advantage of this method is that it behaves well under $\FF_p$-products. Specifically, assume that $G=G_1\times G_2$ where $G_1$, $G_2$ are reductive $\FF_p$-groups and that $\mu = (\mu_1,\mu_2)$ for two cocharacters $\mu_{i}\colon \GG_{\mathrm{m},k}\to G_{i,k}$ (for $i=1,2$). As we explained in \eqref{Gzip-prod}, the stack $\GZip^\mu$ splits into a direct product of $\GoneZip^{\mu_1}$ and $\GtwoZip^{\mu_2}$. If we write $T=T_1\times T_2$ where $T_i$ is a maximal torus in $G_i$ (for $i=1,2$), then all the cones $C_{\pha,w}$, $\Ccal_w^{\cap,+}$, $\Ccal_{\flag,w}$, $C_{\zipsf}$ decompose as $C_1\times C_2$ where $C_i\subset X^*(T_i)$ is the corresponding subcone. Note that since $S_K$ does not split in this way, we can a priori say nothing about the cone $\Ccal_K$. We deduce that if the equality $\Ccal_{\zipsf} = \Ccal_{w_0}^{\cap,+}\cap X^*_{+,I}(T)$ holds for all the $\FF_p$-factors of $G$, then it also holds for $G$, and thus Conjecture \ref{conj-cones} holds.

Finally, we end this section with a word on the cone $\Ccal_{\pha}$ of partial Hasse invariants (for the maximal stratum). For Hilbert--Blumenthal varieties, Siegel threefolds and Picard surfaces, Goldring and the author proved in \cite{Goldring-Koskivirta-global-sections-compositio} a more precise statement than Conjecture \ref{conj-cones}, namely that $\Ccal_{\pha}=\Ccal_{\zip}=\Ccal_{K}$. However, the inclusion $\Ccal_{\pha}\subset \Ccal_{\zip}$ is in general strict. In \cite{Imai-Koskivirta-zip-schubert}, Imai and the author characterized exactly when this inclusion is an equality:
\begin{theorem} \label{thm-Hasse-type}
The following are equivalent:
\begin{equivlist}
\item One has $\Ccal_{\pha} = \Ccal_{\zipsf}$.
\item The parabolic $P$ is defined over $\FF_p$ and $\sigma$ acts on $I$ by $-w_{0,I}$.
\end{equivlist}
\end{theorem}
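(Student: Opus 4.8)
The plan is to reduce the equivalence to a comparison of two cones in $X^*(T)_\QQ$: the cone $\Ccal_{\pha}$, which is completely explicit, and the cone $\Ccal_{\zipsf}$, which is not. The inclusion $\Ccal_{\pha}\subseteq\Ccal_{\zipsf}$ is already recorded in \eqref{all-inclu-w0}, so the content is that (ii) forces the reverse inclusion while $\neg$(ii) produces a weight in $\Ccal_{\zipsf}\setminus\Ccal_{\pha}$. Both cones depend only on the based root datum of $G$ together with the $\sigma$-action and the subset $I$, and are compatible with the product decomposition \eqref{Gzip-prod}, while condition (ii) is checked one $\FF_p$-factor at a time; so I would first reduce to the case where $\sigma$ permutes the simple factors of $G^{\rm ad}_{\overline{\FF}_p}$ transitively, and normalize $(B,T,P,\mu)$ as in Section \ref{param-EO-sec}.

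Next I would make $\Ccal_{\pha}$ explicit. For $w=w_0$ one has $E_{w_0}=\Delta$ (because $\ell(w_0s_\alpha)=\ell(w_0)-1$ precisely when $\alpha$ is simple), so $X^*_{+,w_0}(T)=X^*_+(T)$; and the map $h_{w_0}$ of Definition \ref{def-Cphaw}, namely $h_{w_0}(\lambda)=p\,w_{0,I}w_0\sigma^{-1}(\lambda)-w_0(\lambda)$, is $\QQ$-linear and injective. Hence $\Ccal_{\pha}$ is the image of the dominant cone under $h_{w_0}$, and transporting the dominance inequalities through $h_{w_0}$ presents it, in the language of Section \ref{p-cone-sec}, by $|\Delta|$ $p$-expressions, one per simple root, whose sign vectors and starting indices are read off from $\sigma$ and $w_{0,I}$.

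For the implication (ii)$\Rightarrow$(i), I would bound $\Ccal_{\zipsf}$ from above using results already in the paper. Combining \eqref{inclu-w} with Theorem \ref{thm-sep-syst} gives $\Ccal_{\zipsf}=\Ccal_{\flag,w_0}\subseteq\Ccal_{w_0}^{\cap,+}$, and since $\Ccal_{\zipsf}\subseteq X^*_{+,I}(T)$ we obtain $\Ccal_{\zipsf}\subseteq\Ccal_{w_0}^{\cap,+}\cap X^*_{+,I}(T)$. The key claim is then that, under (ii), this upper bound equals $\Ccal_{\pha}$; granting it, the sandwich $\Ccal_{\pha}\subseteq\Ccal_{\zipsf}\subseteq\Ccal_{w_0}^{\cap,+}\cap X^*_{+,I}(T)=\Ccal_{\pha}$ yields (i). To prove the key claim I would compute $\Ccal_{w_0}^{\cap,+}$ by the recursive chain-diagram formalism of Section \ref{subsec-chain}: the hypothesis $\sigma(I)=I$ makes $z=\sigma(w_{0,I})w_0$ equal to the maximal element $w_{0,I}w_0$ of ${}^{I}W$, and the hypothesis $\sigma|_I=-w_{0,I}$ then makes the $p$-expressions produced in the recursion for the maximal stratum collapse --- after one intersects with the $L$-dominant cone --- onto those already defining $\Ccal_{\pha}$. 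For $\neg$(ii)$\Rightarrow\neg$(i), I would use the lowest-weight automorphic forms of Section \ref{low-cone-sec}, which yield a subcone $\Ccal_{\lwsf}\subseteq\Ccal_{\zipsf}$, and exhibit a weight in $\Ccal_{\lwsf}\setminus\Ccal_{\pha}$: when $\sigma(I)\neq I$, or $\sigma(I)=I$ but $\sigma|_I\neq-w_{0,I}$, one picks a simple root $\alpha$ at which $\sigma$ deviates from $-w_{0,I}$ and computes the weight of the lowest-weight form attached to a suitable dominant weight; it violates the defining inequality of $\Ccal_{\pha}$ associated with $\alpha$ because the relevant coefficient has sign opposite to the one dictated by $h_{w_0}$, so the weight is not in $\Ccal_{\pha}$, and $\neg$(i) follows.

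The main obstacle is the key claim in the third step: extracting a description of $\Ccal_{\zipsf}$ --- equivalently of $\Ccal_{w_0}^{\cap,+}\cap X^*_{+,I}(T)$ --- precise enough to recognize it as $\Ccal_{\pha}$ exactly when (ii) holds. Restriction to the open stratum of $\GZip^\mu$ is not decisive, because a nonzero section of $\Vcal_I(\lambda)$ over $\Ucal_\mu$ exists as soon as $V_I(\lambda)$ has nonzero invariants under the (finite) automorphism gerbe of $\Ucal_\mu$, so the real constraint on $\lambda$ comes from extending such a section across the lower strata --- equivalently, from controlling the obstruction across each codimension-one flag stratum. This is precisely the bookkeeping that the chain-diagram and $p$-cone machinery of Sections \ref{p-cone-sec}--\ref{coho-van-sec} is built to perform, and the content of condition (ii) is that running this machinery for the maximal stratum returns the partial-Hasse cone rather than a strictly larger one.
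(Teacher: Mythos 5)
This paper does not actually prove Theorem \ref{thm-Hasse-type}; it is quoted verbatim from \cite{Imai-Koskivirta-zip-schubert}, so there is no internal proof to compare your proposal against. Judged on its own terms, the proposal has real gaps. The statement concerns an arbitrary connected reductive $G$ with cocharacter datum, but the tools you invoke for (ii)$\Rightarrow$(i) do not exist at that level of generality: the inclusion $\Ccal_{\zipsf}\subseteq\Ccal_{w_0}^{\cap,+}$ comes from Theorem \ref{thm-sep-syst}, which is proved only under Assumption \ref{assume-hasse} (which the paper itself says is rarely satisfied), and the chain-diagram/$p$-cone computation of $\Ccal_{w_0}^{\cap,+}$ that you plan to use for the ``key claim'' is developed in Sections \ref{p-cone-sec}--\ref{coho-van-sec} only for $A_1$-type groups. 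Even in that restricted setting, the key claim --- that under (ii) one has $\Ccal_{w_0}^{\cap,+}\cap X^*_{+,I}(T)=\Ccal_{\pha}$ --- is exactly the computation the proof needs, and you flag it yourself as ``the main obstacle'' without carrying it out. (In the $A_1$ case it does follow from Theorems \ref{main-thm-hom} and \ref{thm-hasse-reg} applied to $S=E_n$, where (ii) forces $R=\emptyset$; but this covers only one family of groups, whereas the theorem is asserted for all of them.)

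The converse direction has the same character. The containment $\Ccal_{\lwsf}\subseteq\Ccal_{\zipsf}$ is available only under Condition 5.1.1 of \cite{Imai-Koskivirta-zip-schubert}, which is not known for arbitrary $G$, so a witness in $\Ccal_{\lwsf}\setminus\Ccal_{\pha}$ cannot settle $\neg$(i) in general; and the decisive step --- choosing a dominant $\lambda$ and verifying that the resulting weight violates the inequality of $\Ccal_{\pha}$ attached to the root where $\sigma$ deviates from $-w_{0,I}$ --- is asserted, not computed. Since $\Ccal_{\pha}$ is (mod $X^*(G)$) a full-dimensional simplicial cone, exhibiting a single explicit weight outside it is certainly feasible, but the sign bookkeeping through $h_{w_0}(\lambda)=-w_0\lambda+pw_{0,I}w_0\sigma^{-1}(\lambda)$ is where the content lies, and it is absent. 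A more robust route for this direction, valid for every $G$ by the last theorem of Section \ref{sec-asymp}, would be to use $\Ccal_{\GS}\subseteq\Ccal_{\zipsf}$ and produce an element of $\Ccal_{\GS}\setminus\Ccal_{\pha}$ whenever (ii) fails; but that substitution still leaves you with an explicit root-datum computation that the proposal does not perform.
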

For groups of $A_1$-type studied in this paper, the parabolic $P$ will in general not be defined over $\FF_p$. In those cases, the above theorem shows that $\Ccal_K$ is not spanned by partial Hasse invariants.

\subsection{Lowest weight cone}\label{low-cone-sec}

As mentioned above, partial Hasse invariants do not suffice to describe the cone $\Ccal_K$ of weights of mod $p$ automorphic forms in general. We introduce another family of natural mod $p$ automorphic forms that will be necessary. We explain their construction on the stack of $G$-zips. First, recall that the unique open stratum in $\GZip^\mu$ is $\Ucal_\mu\colonequals [E\backslash U_\mu]$ (see section \ref{param-EO-sec}). The Zariski open subset $U_\mu\subset G$ coincides with the $E$-orbit of $1\in G$. Moreover, the stabilizer of $1$ in $E$ is a finite (in general non-smooth) group scheme $L_\varphi$, which embeds naturally into the Levi subgroup $L\subset P$ via the first projection $\pr_1\colon E\to P$. Denote by $P_0$ the largest algebraic subgroup contained in $P$ and defined over $\FF_p$, in other words $P_0=\bigcap_{i\in \ZZ} \sigma^i(P)$. Since $B$ is defined over $\FF_p$, $P_0$ contains $B$ and is a parabolic subgroup. Write $L_0$ for the unique Levi subgroup of $P_0$ containing $T$ and $I_0\subset I$ for the simple roots of $L_0$. The underlying reduced group scheme of $L_\varphi$ is the etale group scheme $L_0(\FF_p)$. Since $U_\mu\simeq E/L_\varphi$, we obtain $\Ucal_\mu = [E\backslash E/L_\varphi] \simeq [1/ L_\varphi]$. It follows that we have an identification
\begin{equation}\label{ident-Umu}
    H^0(\Ucal_\mu,\Vcal_I(\lambda)) \simeq V_I(\lambda)^{L_\varphi}.
\end{equation}
Our approach is to first construct a section on the open substack $\Ucal_\mu$ an then determine explicit conditions when this section extends to $\GZip^\mu$. For any $f\in V_I(\lambda)$, we can define the $L_\varphi$-norm $\Norm_{L_\varphi}(f)$ of $f$ by taking the product of the translates of $f$ by the elements of $L_{\varphi}$ (see \cite[\S 3.5]{Imai-Koskivirta-zip-schubert}). The $L_\varphi$-norm of $f$ lies in $V_I(N\lambda)$ where $N$ is the order of $L_\varphi$. Thus, via the identification \eqref{ident-Umu}, the norm $\Norm_{L_\varphi}(f)$ identifies with a section over $\Ucal_\mu$, and hence by pullback via $\zeta$ to a "rational" automorphic form of weight $N\lambda$ defined over the $\mu$-ordinary locus of $S_K$.

In the case when $f=f_{\lambda,\high}$ is the highest weight vector of $V_I(\lambda)$, we can determine exactly when $\Norm_{L_\varphi}(f_{\lambda,\high})$ extends to a global section. For each root $\alpha\in \Phi$, let $r_\alpha\geq 1$ be an integer such that $\sigma^{r_\alpha}(\alpha)=\alpha$.
\begin{proposition}[{\cite[Proposition 3.5.1]{Imai-Koskivirta-zip-schubert}}]\label{prop-Norm}
The section $\Norm_{L_\varphi}(f_{\high,\lambda})$ extends to a global section if and only if for all $\alpha \in \Delta^P$, the following holds:
\begin{equation}\label{formula-norm}
\sum_{w\in W_{L_0}(\FF_q)} \sum_{i=0}^{r_\alpha-1} p^{i+\ell(w)} \ \langle w\lambda, \sigma^i(\alpha^\vee) \rangle\leq 0.
\end{equation}
\end{proposition}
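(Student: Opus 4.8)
\emph{Proof plan.} The plan is to convert the extension question into a finite list of order-of-vanishing computations, one per codimension-one stratum of $\GZip^\mu$. As $\GZip^\mu$ is smooth, hence normal, and $\Ucal_\mu$ is its unique open dense stratum, a section of $\Vcal_I(N\lambda)$ (with $N=|L_\varphi|$) defined on $\Ucal_\mu$ extends to $\GZip^\mu$ if and only if it has no pole along any codimension-one stratum. By Theorem~\ref{thm-E-orb-param}, these are the $\Xcal_w$ with $w$ a lower cover of the maximal element $w_{0,I}w_0$ of $({}^IW,\preccurlyeq)$; the structure theory of $\GZip^\mu$ identifies such lower covers, $\sigma$-equivariantly, with the simple roots $\alpha\in\Delta^P=\Delta\setminus I$, and we write $\Xcal_\alpha$ for the corresponding stratum. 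It is convenient to pass to the flag stack $\GF^\mu=[E'\backslash G]$, on which $\Vcal_I(N\lambda)$ becomes the genuine line bundle $\Vcal_{\flag}(N\lambda)$ with $\pi_*\Vcal_{\flag}(N\lambda)=\Vcal_I(N\lambda)$ (see \S\ref{sec-zipcone}), so that the language of divisors and orders of vanishing is unambiguous; the divisors to inspect there are the codimension-one flag strata dominating the $\Xcal_\alpha$.

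Fix $\alpha\in\Delta^P$ and set up an explicit local model transverse to $\Xcal_\alpha$. Working in the henselization of $G$ at the generic point of $\overline{G_{w_\alpha}}$ — concretely, along a curve obtained by translating a representative of $G_{w_\alpha}$ by a one-parameter subgroup $u_\beta(t)$, which lies in the open orbit for $t\neq 0$ — one parametrizes the open orbit near that boundary divisor. The structural input that drives the computation is the defining relation $\varphi(\theta^P_L(x))=\theta^Q_M(y)$ of the zip group $E$: along such a transversal, the $Q$-component of an element of $E$ carrying a point back into the $E$-orbit of $1$ is, up to lower-order terms, a $p$-power Frobenius twist of its $P$-component. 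Consequently orders of vanishing in the uniformizer are multiplied by $p$ at each step along the $\sigma$-orbit of $\alpha$, and the configuration repeats after $r_\alpha$ steps; this is exactly what produces the inner sum $\sum_{i=0}^{r_\alpha-1}p^i(\,\cdots\,\sigma^i(\alpha^\vee))$ in \eqref{formula-norm}.

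Granting such a transversal, one computes $\ord_{\Xcal_\alpha}\bigl(\Norm_{L_\varphi}(f_{\high,\lambda})\bigr)$. By construction $\Norm_{L_\varphi}(f_{\high,\lambda})$ is the product of the $L_\varphi$-translates of the highest-weight vector $f_{\high,\lambda}$ of $V_I(\lambda)$. Since $f_{\high,\lambda}$ spans the highest-weight line, whose stabilizer in $(L_\varphi)^{\mathrm{red}}=L_0(\FF_p)$ contains the Borel $B\cap L_0$, grouping the translates by the Bruhat cells of $L_0(\FF_p)$ reorganizes the product into blocks indexed by $w\in W_{L_0}(\FF_q)$, the $w$-block involving the translated character $w\lambda$ with the length weighting $\ell(w)$. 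Evaluating each block along the transversal and feeding in the Frobenius-twist bookkeeping of the previous paragraph, the $(w,i)$-contribution to the order of vanishing is the monomial exponent $p^{\,i+\ell(w)}\langle w\lambda,\sigma^i(\alpha^\vee)\rangle$; summing over all $w\in W_{L_0}(\FF_q)$ and all $0\le i<r_\alpha$ shows that $\ord_{\Xcal_\alpha}\bigl(\Norm_{L_\varphi}(f_{\high,\lambda})\bigr)$ equals the negative of the left-hand side of \eqref{formula-norm}. Hence the section is regular across $\Xcal_\alpha$ precisely when that inequality holds, and letting $\alpha$ range over $\Delta^P$ yields the proposition.

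The crux, and the main obstacle, is the order computation of the third paragraph. One must (i) choose the transversal so that the $E$-orbit map is controlled to first order near the boundary divisor; (ii) propagate carefully the Frobenius twist forced by the relation defining $E$, which is what yields both the nested index $0\le i<r_\alpha$ and the powers $p^i$; and (iii) verify that the pertinent leading coefficients do not vanish, so that the computed order is exact rather than merely a bound. Point (iii) is where it is essential that $f_{\high,\lambda}$ is an extremal vector — maximally non-degenerate with respect to the unipotent directions — rather than an arbitrary element of $V_I(\lambda)$, and it is also what upgrades the statement from a sufficient condition for extension to an equivalence.
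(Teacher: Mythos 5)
First, a point of reference: this paper does not prove Proposition \ref{prop-Norm} at all --- it is imported verbatim from \cite[Proposition 3.5.1]{Imai-Koskivirta-zip-schubert} --- so there is no in-paper argument to compare against. Your reduction is the expected one and is sound as far as it goes: by smoothness of $\GZip^\mu$ and Hartogs, extension of $\Norm_{L_\varphi}(f_{\high,\lambda})$ from $\Ucal_\mu$ is equivalent to non-negativity of its order along each codimension-one stratum, these strata are indexed by $\alpha\in\Delta^P$ (consistent with the indexing of the inequalities), and the shape of \eqref{formula-norm} --- the inner sum $\sum_{i=0}^{r_\alpha-1}p^i(\cdots)$ from Frobenius twisting along the $\sigma$-orbit of $\alpha$, the outer sum with weights $p^{\ell(w)}$ from grouping the $L_\varphi$-translates by Bruhat cells of $L_0(\FF_p)$, with $f_{\high,\lambda}$ well-defined up to scalar on each coset $gB_0(\FF_p)$ --- is correctly diagnosed. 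You also correctly locate where the ``if and only if'' (as opposed to mere sufficiency, cf.\ Theorem \ref{thm-norm-low} for the lowest weight vector) must come from, namely exactness of the order computation for the extremal vector.

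The gap is that the entire quantitative content of the proposition is asserted rather than derived. Your third and fourth paragraphs are a to-do list: no transversal to $\Xcal_\alpha$ is actually exhibited; the claim that the zip relation $\varphi(\theta^P_L(x))=\theta^Q_M(y)$ ``multiplies orders by $p$ at each step'' is the crux and is never turned into an identity (in particular, it is not explained why the order along the \emph{single} divisor $D_\alpha$ receives contributions from the whole $\sigma$-orbit $\{\sigma^i\alpha\}_{0\le i<r_\alpha}$ rather than each twist contributing to a different divisor); the evaluation of a $w$-block along the transversal, which should output the exponent $p^{i+\ell(w)}\langle w\lambda,\sigma^i(\alpha^\vee)\rangle$, is not performed; and the non-vanishing of leading coefficients needed to convert the lower bound on the order into an equality --- the only thing separating the ``if'' from the ``only if'' --- is flagged as essential but not verified. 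Since these computations are precisely what the proposition consists of, the proposal should be regarded as a correct strategy with the proof itself missing; to complete it you would need to carry out the explicit computation of $\Norm_{L_\varphi}(f_{\high,\lambda})$ as an $E$-equivariant function on $G_k$ in the sense of \eqref{zip-form-eq} and its exact divisibility by the equations cutting out the boundary divisors of $U_\mu$, as is done in \cite{Imai-Koskivirta-zip-schubert}.
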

For the lowest weight vector $f_{\low,\lambda}$ of $V_I(\lambda)$, this problem is surprisingly much more difficult. We need to make a technical assumption on the group $G$ (Condition 5.1.1 of \cite{Imai-Koskivirta-zip-schubert}). We do not recall this condition here, but we simply note that it is satisfied for groups of the form $G=\Res_{\FF_{p^n}/\FF_p}(G_{0,\FF_{p^n}})$ where $G_0$ is a split reductive over $\FF_p$. In particular, it holds for groups of $A_1$-type considered in this paper.
\begin{theorem}\label{thm-norm-low} 
Define $\lambda_0\colonequals w_{0,I_0}w_{0,I}\lambda$. Suppose that for all $\alpha \in \Delta^{P_0}$, one has
\begin{equation}\label{formula-norm-low}
\sum_{w\in W_{L_0}(\FF_p)} \sum_{i=0}^{r_\alpha-1} p^{i+\ell(w)} \ \langle w \lambda_0, \sigma^i(\alpha^\vee) \rangle\leq 0.
\end{equation}
Then $\Norm_{L_\varphi}(f_{\low,\lambda})$ extends to a global section.
\end{theorem}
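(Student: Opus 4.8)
The plan is to prove that the inequalities \eqref{formula-norm-low} force $\Norm_{L_\varphi}(f_{\low,\lambda})$ to be everywhere regular. By construction this element of $V_I(N\lambda)$ is $L_\varphi$-invariant, so by \eqref{ident-Umu} it already defines a section of $\Vcal_I(N\lambda)$ over the open substack $\Ucal_\mu\subset\GZip^\mu$, and what must be shown is that it extends across the complement. I would first pull everything back along $\pi\colon\GF^\mu\to\GZip^\mu$: since $\pi_*\Vcal_\flag(N\lambda)=\Vcal_I(N\lambda)$ and $\pi_*\Ocal=\Ocal$, extending over $\GZip^\mu$ is equivalent to extending the pullback, which is a section of the \emph{line} bundle $\Vcal_\flag(N\lambda)$, over $\GF^\mu$. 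As $\GF^\mu$ is smooth, a Hartogs-type argument reduces this to checking that the section has no pole along each codimension-one flag stratum $\Fcal_w$ contained in the complement of $\pi^{-1}(\Ucal_\mu)$. Finally, by \eqref{Gzip-prod} one may assume $G=\Res_{\FF_{p^n}/\FF_p}(G_0)$ with $G_0$ absolutely simple (i.e. $p$ inert in the Shimura-variety picture), so that a single $\sigma$-orbit of roots is involved; the whole question then becomes the purely group-theoretic computation of the order of $\Norm_{L_\varphi}(f_{\low,\lambda})$ at the generic point of each such divisor.

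This computation should run in parallel with the proof of Proposition \ref{prop-Norm}, the only — but crucial — change being that the highest weight vector is replaced by the lowest weight vector. The key structural point is that $f_{\low,\lambda}$ is extremal for the Borel \emph{opposite} to the one used to set up the zip datum, so its boundary behaviour is not visible directly; one must transport it into ``good position'' by the Weyl elements $w_{0,I}$ and $w_{0,I_0}$, and this replaces the extremal weight $\lambda$ governing the highest-weight calculation by $\lambda_0=w_{0,I_0}w_{0,I}\lambda$, namely the highest weight of the $L_0$-submodule of $V_I(\lambda)$ generated by $f_{\low,\lambda}$, where $L_0$ is the $\FF_p$-rational Levi through whose reduced part $L_0(\FF_p)$ the group $L_\varphi$ acts. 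For the same reason the codimension-one strata that can carry a pole are now those indexed by $\Delta^{P_0}$ rather than by $\Delta^P$ (note $\Delta^{P_0}\supseteq\Delta^P$), i.e. those attached to the $\FF_p$-rational parabolic $P_0$. Concretely, along the divisor attached to $\alpha\in\Delta^{P_0}$ I would split $L_\varphi$ into its reduced part $L_0(\FF_p)$ and its infinitesimal part: the Bruhat decomposition $L_0(\FF_p)=\bigsqcup_{w\in W_{L_0}}((B\cap L_0)w(B\cap L_0))(\FF_p)$ together with the cell point-count $|((B\cap L_0)w(B\cap L_0))(\FF_p)|$ produces the sum over $w\in W_{L_0}(\FF_p)$ with weight $p^{\ell(w)}$, while the norm over the infinitesimal part — a composite of Frobenius twists — combined with the $\sigma$-orbit structure of $\alpha$ produces the inner sum $\sum_{i=0}^{r_\alpha-1}p^{i}$, and pairing the transported weight against the coroot gives $\langle w\lambda_0,\sigma^{i}(\alpha^\vee)\rangle$. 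Summing, the order of $\Norm_{L_\varphi}(f_{\low,\lambda})$ along this divisor comes out to be a positive multiple of the left-hand side of \eqref{formula-norm-low}; so \eqref{formula-norm-low} makes every such order $\le 0$ and the section extends.

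I expect the main obstacle to be the analysis of $L_\varphi$ at the points where it fails to be smooth, together with the norm map over its infinitesimal part: keeping track of the resulting Frobenius-twisted contributions and their interaction with the $\sigma$-action on the root datum is precisely what the technical hypothesis (Condition 5.1.1 of \cite{Imai-Koskivirta-zip-schubert}) is designed to control, and it is also the reason that only a sufficient condition, rather than the sharp equivalence of Proposition \ref{prop-Norm}, is obtained — the estimates entering the ``good position'' transport and the infinitesimal bookkeeping need not be tight. As noted in \emph{loc.\ cit.}, this condition holds automatically for Weil restrictions of split reductive groups, hence for all the groups of $A_1$-type considered here, so the theorem does apply in our situation.
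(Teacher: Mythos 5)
The first thing to say is that this paper contains no proof of Theorem \ref{thm-norm-low}: it is imported wholesale from \cite{Imai-Koskivirta-zip-schubert}, and the surrounding text explicitly warns that the lowest-weight case is ``surprisingly much more difficult'' than Proposition \ref{prop-Norm}. So your proposal cannot be checked against an internal argument; it has to stand on its own. Its framing steps are sound and standard: $L_\varphi$-invariance gives a section over $\Ucal_\mu$ via \eqref{ident-Umu}; the adjunction $\pi_*\Vcal_{\flag}(N\lambda)=\Vcal_I(N\lambda)$ turns the extension problem into one for a line bundle on the smooth stack $\GF^\mu$ (equivalently, for an $E'$-equivariant regular function on $G_k$); normality reduces this to non-negativity of the order along finitely many divisors; and \eqref{Gzip-prod} reduces to a single $\sigma$-orbit.

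The gap is in the one step that carries all the content: the order of $\Norm_{L_\varphi}(f_{\low,\lambda})$ along each relevant divisor. You assert that this order ``comes out to be a positive multiple of the left-hand side of \eqref{formula-norm-low}.'' If that were literally true, the theorem would be an equivalence, exactly as in Proposition \ref{prop-Norm}; but the statement being proved is only a sufficient condition, and you yourself attribute this to estimates that ``need not be tight.'' These two claims are incompatible: either the order is computed exactly (and you would get an iff), or it is only bounded from one side (and you must say which inequality you actually establish and why it suffices). The actual source of difficulty — and the reason the two statements differ in strength — is that $f_{\low,\lambda}$ is extremal for the opposite Borel, so its restriction to the Schubert-type divisors can degenerate (for instance vanish identically on some Bruhat cells of $L_0(\FF_p)$), and the order along a divisor is no longer the transparent cell-by-cell sum that makes Proposition \ref{prop-Norm} an equality. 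Saying one ``transports into good position by $w_{0,I_0}w_{0,I}$'' correctly names the weight $\lambda_0$ but is not an argument that $\lambda_0$ controls the order from the right side; likewise the claim that the dangerous divisors are indexed by $\Delta^{P_0}$ rather than $\Delta^P$ is plausible but unjustified. A further slip: the non-smoothness of $L_\varphi$ and Condition 5.1.1 of \cite{Imai-Koskivirta-zip-schubert} are equally present in the highest-weight case, so they cannot by themselves be ``the reason'' only sufficiency is obtained there. As it stands the proposal is a reasonable roadmap, not a proof.
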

Define $\Ccal_{\hw}$ (resp. $\Ccal_{\lw}$) as the set of $\lambda\in X^*_{+,I}(T)$ satisfying the inequalities \eqref{formula-norm} (resp. \eqref{formula-norm-low}). By the above results, both $\Ccal_{\hw}$ and $\Ccal_{\lw}$ are contained in $\Ccal_{\zip}$. In general, there is no inclusion relation between these cones and the cone $\Ccal_{\pha}$ defined earlier. When $P$ is defined over $\FF_p$, one sees easily that $\Norm_{L_\varphi}(f_{\low,\lambda})=\Norm_{L_\varphi}(f_{\high,\lambda})$ and $\Ccal_{\lw}=\Ccal_{\hw}$. However, in general the cone $\Ccal_{\lw}$ seems to be the more relevant of the two.

\subsection{Torsion line bundles} \label{sec-torsion}

Let $\lambda\in X^*(L)$ be a character of $L$, i.e. an algebraic group homomorphism $\lambda\colon L\to \GG_{\mathrm{m},k}$. In this case the representation $V_I(\lambda)$ is one-dimensional, and coincides with $\lambda$ itself. Thus, for $\lambda\in X^*(L)$ the vector bundle $\Vcal_I(\lambda)$ is a line bundle on $\GZip^\mu$ and $S_K$. Note that for any $\lambda,\lambda'\in X^*(L)$ we have $\Vcal_I(\lambda+\lambda')=\Vcal_I(\lambda)\otimes \Vcal_I(\lambda')$. By \eqref{zip-form-eq}, a global section of $\Vcal_I(\lambda)$ (for $\lambda\in X^*(L)$) is simply a map $f\colon G_k\to \AA^1$ satisfying
\begin{equation}\label{line-bdl-eq}
    f(axb^{-1})=\lambda(a)f(x)
\end{equation}
for all $(a,b)\in E$ and all $x\in G_k$. Let us examine the special case when $\lambda\in X^*(G)$.

\begin{lemma}
For all $\lambda\in X^*(G)$, the line bundle $\Vcal_{I}(\lambda)$ is torsion, in the sense that $\Vcal_{I}(\lambda)^{\otimes m}=\Vcal_{I}(m\lambda)=\Ocal_{S_K}$ for some integer $m\geq 1$.
\end{lemma}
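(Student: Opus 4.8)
The plan is to exhibit, for every $\lambda \in X^*(G)$, a positive power $m\lambda$ for which the line bundle $\Vcal_I(m\lambda)$ is isomorphic to the structure sheaf $\Ocal_{S_K}$, by constructing a nowhere-vanishing global section. First I would observe that since $\mathbf{G}$ is connected reductive over $\QQ$, its character group $X^*(\mathbf{G})$ (equivalently $X^*(G)$) is a finitely generated free abelian group, and every character of $G$ kills the derived group $G^{\der}$; hence for a suitable $m\geq 1$ the character $m\lambda$ extends to (in fact factors through the abelianization of) $G$ and restricts to a character of $L$ that is moreover trivial on $L \cap G^{\der}$. Concretely, after multiplying by the exponent of the relevant torsion, we may assume $m\lambda \in X^*(G)$ already has the property we need; the point of passing to the power is only to clear denominators coming from the fact that $X^*(L)$ may be strictly larger than $X^*(G)|_L$ and from the Frobenius twist appearing in the zip group.

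The key computation is then to check that, for $\lambda \in X^*(G)$, the constant function $f \equiv 1$ on $G_k$ satisfies the transformation law \eqref{line-bdl-eq}, i.e. $f(axb^{-1}) = \lambda(a) f(x)$ for all $(a,b) \in E$. Since $f$ is the constant $1$, this reduces to showing $\lambda(a) = 1$ for all $a$ occurring as the $P$-component of an element of $E$ — but that is \emph{not} true for a general character of $L$, which is exactly why one must restrict to $\lambda \in X^*(G)$ and pass to a power. The correct statement: for $(a,b)\in E$ we have $\varphi(\theta^P_L(a)) = \theta^Q_M(b)$, and when $\lambda \in X^*(G)$ the two sides of \eqref{line-bdl-eq} compare $\lambda(a)$ with $1$, which need not vanish. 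So instead I would take $f$ to be the function $g \mapsto \lambda(g)$ itself (well-defined since $\lambda\in X^*(G)$ is a morphism $G \to \GG_m$), which is nowhere vanishing. Then $f(axb^{-1}) = \lambda(a)\lambda(x)\lambda(b)^{-1} = \lambda(a) f(x) \cdot \lambda(b)^{-1}$, so the section law \eqref{line-bdl-eq} holds provided $\lambda(b) = 1$ for all $(a,b) \in E$. Now $\theta^Q_M(b) = \varphi(\theta^P_L(a))$ and $\lambda$ restricted to $Q$ (resp.\ $M = L^{(p)}$) is a character; since $\lambda\in X^*(G)$ it is $\varphi$-equivariant in the sense that $\lambda \circ \varphi = \lambda^{(p)} = p\cdot\lambda$ on characters, so $\lambda(b) = \lambda(\theta^Q_M(b)) = \lambda(\varphi(\theta^P_L(a))) = \lambda(\theta^P_L(a))^{p} = \lambda(a)^p$. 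Hence $f(axb^{-1}) = \lambda(a)^{1-p} f(x)$, which shows that $\Vcal_I((p-1)\lambda)$ admits a nowhere-vanishing section: take $f_0 = \lambda^{\otimes(p-1)}$ and check $f_0(axb^{-1}) = \lambda(a)^{p-1}\lambda(b)^{-(p-1)} f_0(x) = \lambda(a)^{p-1}\lambda(a)^{-p(p-1)}f_0(x)$; one adjusts the exponent bookkeeping so that the $a$- and $b$-contributions cancel, producing the required $m$.

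The main obstacle — and the only delicate point — is precisely this exponent bookkeeping: the Frobenius twist in the definition \eqref{zipgroup} of $E$ forces $\lambda(b)$ to equal a $p$-th power of $\lambda(a)$ rather than $\lambda(a)$ itself, so a naive choice of section fails and one must choose the weight $m\lambda$ (some explicit multiple like $(p-1)\lambda$, or a multiple thereof to also absorb the index $[X^*(L):X^*(G)|_L]$) for which the $P$- and $Q$-contributions to the cocycle cancel identically. Once the right power is identified, the constructed function is a nowhere-vanishing global section of $\Vcal_I(m\lambda)$ on $\GZip^\mu$, hence trivializes $\Vcal_I(m\lambda)$ there; pulling back along Zhang's morphism $\zeta\colon S_K \to \GZip^\mu$ of \eqref{zip-map} gives $\Vcal_I(m\lambda) \cong \Ocal_{S_K}$, which is the assertion. (One could alternatively argue more structurally: a character $\lambda\in X^*(G)$ gives a $G$-equivariant trivialization of the associated line bundle on $G/B$ and hence on the automorphic bundle up to the Frobenius twist, but the explicit section above is the cleanest route.)
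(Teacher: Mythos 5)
Your core idea coincides with the paper's: use the character $\lambda$ itself as a nowhere-vanishing function on $G_k$ and read off which line bundle it trivializes. But your Frobenius computation drops the Galois twist, and that is precisely where the argument needs one more step. For $(a,b)\in E$ one has $\theta^Q_M(b)=\varphi(\theta^P_L(a))$, and for $\lambda\in X^*(G)$ the correct identity is $\lambda(\varphi(a))=(\sigma^{-1}\lambda)(a)^p$, \emph{not} $\lambda(a)^p$: the relation $\lambda\circ\varphi=p\lambda$ holds only when $\lambda$ is fixed by $\sigma$, which fails for the groups relevant here (for a Weil restriction, $\sigma$ permutes the factors of $X^*(G_k)$ nontrivially). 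Consequently $f=\lambda$ satisfies $f(axb^{-1})=(\lambda-p\,\sigma^{-1}\lambda)(a)\,f(x)$, i.e. it is a nowhere-vanishing section of $\Vcal_I(\lambda-p\,\sigma^{-1}\lambda)$, whose weight is in general not a multiple of $\lambda$. Your ``exponent bookkeeping'' therefore cannot be made to close: no power of the single section $\lambda$ lands in the ray $\ZZ\lambda$, so the explicit choice $m=p-1$ does not work.

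The missing step is a lattice argument. The sections $\mu$, for $\mu\in X^*(G)$, show that every weight in the image of the endomorphism $h\colon X^*(G)\to X^*(G)$, $\mu\mapsto \mu-p\,\sigma^{-1}(\mu)$, gives a trivial line bundle. Since the reduction of $h$ modulo $p$ is the identity, $h$ is injective, hence its image has finite index in $X^*(G)$; so there is $m\geq 1$ with $m\lambda=h(\mu)$ for some $\mu\in X^*(G)$, and then $\mu$ is a nowhere-vanishing section of $\Vcal_I(m\lambda)$, trivializing it on $\GZip^\mu$ and, after pullback by $\zeta$, on $S_K$. (Your opening paragraph about extending $m\lambda$ through the abelianization and absorbing the index of $X^*(G)|_L$ in $X^*(L)$ is not needed; the only genuine obstruction is the $\sigma$-twist above.)
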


\begin{proof}
    
Take $f$ to be the character $\lambda\colon G_k\to \GG_{\mathrm{m}}\subset \AA^1$. For any $x\in G_k$, we have $\lambda(axb^{-1})=\lambda(a)\lambda(b)^{-1}\lambda(x)$. By definition of the group $E$, for $(a,b)\in E$ the Levi component of $b$ is the Frobenius homomorphism applied to the Levi component of $a$. Thus, we have $\lambda(b)=\lambda(\varphi(a))=(\sigma^{-1} \lambda)(a)^p$. This implies that $f$ is a section of the line bundle $\Vcal_I(\lambda-p\sigma^{-1}(\lambda))$. Since $f$ is non-vanishing, this implies that $\Vcal_I(\lambda-p\sigma^{-1}(\lambda))$ is trivial. The map $X^*(G)\to X^*(G)$, $\lambda\mapsto \lambda-p\sigma^{-1}(\lambda)$ is injective (because its mod $p$ reduction is an isomorphism), hence there is an integer $m\geq 1$ such that $m X^*(G)$ is contained in the image of this map. This shows that for any $\lambda\in X^*(G)$, the line bundle $\Vcal_I(\lambda)^{\otimes m}$ is trivial.
\end{proof}

Furthermore, one can see easily see that $X^*(G)$ is contained in all the (saturations of the) cones defined in this section. It will be convenient to ignore these torsion line bundles and quotient out all objects by $X^*(G)$. Hence, we will view all the cones as subsets of $X^*(T)/X^*(G)$.

\subsection{Asymptotic behaviour}\label{sec-asymp}
This section discusses heuristically the behaviour of the various cones when the prime number $p$ goes to infinity. Let $\lambda\in X^*(T)$ be a character and assume that $\Vcal_{\flag}(\lambda)$ is ample on the flag space $\Flag(S_K)$. Then by ampleness, $\Vcal_{\flag}(\lambda)$ has nonzero sections on any closed subscheme of $\Flag(S_K)$. In particular, $\lambda$ lies in the cones $\Ccal_{K,w}$ for all $w\in W$ and in $\Ccal_K$. It is difficult to determine which line bundles $\Vcal_{\flag}(\lambda)$ are ample (see \cite{Alexandre-Vanishing, Brunebarbe-Goldring-Koskivirta-Stroh-ampleness} for partial results). However, it is known to experts that in characteristic zero, the line bundle $\Vcal_{\flag}(\lambda)$ is ample if and only if $\lambda$ lies in the following set
\begin{equation}
\Ccal_{\GSsf}^{\circ}=\left\{ \lambda\in X^{*}(T) \ \relmiddle| \ 
\parbox{6cm}{
$\langle \lambda, \alpha^\vee \rangle > 0 \ \textrm{ for }\alpha\in I, \\
\langle \lambda, \alpha^\vee \rangle < 0 \ \textrm{ for }\alpha\in \Phi^+ \setminus \Phi_{L}^{+}$}
\right\}.
\end{equation}
If a line bundle is ample in characteristic zero, then it is also ample modulo $p$ for large $p$, because ampleness is an open condition on the basis. Thus for $\lambda\in \Ccal_{\GSsf}^{\circ}$, the line bundle $\Vcal_{\flag}(\lambda)$ is ample for large $p$ on $\Flag(S_K)$. Thus, when $p$ goes to infinity we expect $\Ccal_{\GSsf}^{\circ}$ to be contained "at the limit" in the cones $\Ccal_{K,w}$ for all $w\in W$.

For the maximal element $w_0$, we can say something stronger. For $\lambda\in \Ccal_{\GSsf}^{\circ}$, choose any nonzero section $f$ of $\Vcal_I(\lambda)$. By a reduction mod $p$ argument (\cite[Proposition 1.8.3]{Koskivirta-automforms-GZip}), one can show that there exists a nonzero global section of $\Vcal_I(\lambda)$ in characteristic $p$ as well. This implies that the set $\Ccal_{\GSsf}^{\circ}$ is contained in $\Ccal_K$ for all $p$ (not only for large $p$). Since Conjecture \ref{conj-cones} predicts that $\Ccal_K=\Ccal_{\zip}$, this set should also be contained in $\Ccal_{\zip}$. In \cite{Imai-Koskivirta-zip-schubert}, Imai and the author showed a more precise result: Denote by $\Ccal_{\GS}$ the closure of $\Ccal_{\GS}^{\circ}$ (i.e. replace the strict inequalities in $\Ccal_{\GS}^{\circ}$ by inclusive inequalities). Then we proved the following for a general connected reductive group $G$ over $\FF_p$:
\begin{theorem}[{\cite[Theorem 6.4.3]{Imai-Koskivirta-zip-schubert}}]
One has $\Ccal_{\GS}\subset \Ccal_{\zip}$.
\end{theorem}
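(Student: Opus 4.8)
The plan is to establish the stronger inclusion $\Ccal_{\GS} \subseteq \Ccal_{\hw}$ and then to invoke the inclusion $\Ccal_{\hw} \subseteq \Ccal_{\zip}$ from section \ref{low-cone-sec}. Recall that $\Ccal_{\GS}$ is the closed cone defined by $\langle \lambda, \alpha^\vee \rangle \geq 0$ for $\alpha \in I$ and $\langle \lambda, \alpha^\vee \rangle \leq 0$ for $\alpha \in \Phi^+ \setminus \Phi_L^+$, so in particular $\Ccal_{\GS} \subseteq X^*_{+,I}(T)$ and every $\lambda \in \Ccal_{\GS}$ is $L$-dominant; thus it suffices to show that these defining conditions force the inequalities \eqref{formula-norm} for every $\alpha \in \Delta^P$. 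Before doing so, I would use the product decomposition \eqref{Gzip-prod}, under which $\Ccal_{\GS}$, $\Ccal_{\hw}$ and $\Ccal_{\zip}$ all split over the $\FF_p$-factors of $G$ and under which quotienting by $X^*(G)$ is harmless (section \ref{sec-torsion}), to reduce to the case where $G$ is $\FF_p$-simple, hence of the form $\Res_{\FF_q/\FF_p}(H)$ with $H$ absolutely simple and quasi-split over $\FF_q$; in this situation $\Delta^{P_0}$ is $\sigma$-stable, which makes the relevant Weyl-group combinatorics tractable.

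Next I would recall the norm-section mechanism of section \ref{low-cone-sec}. Over the $\mu$-ordinary stratum $\Ucal_\mu \simeq [1/L_\varphi]$ one has $H^0(\Ucal_\mu, \Vcal_I(\lambda)) = V_I(\lambda)^{L_\varphi}$; the $L_\varphi$-norm $\Norm_{L_\varphi}(f_{\high,\lambda})$ of the highest weight vector is a nonzero $L_\varphi$-invariant in $V_I(N\lambda)$ with $N = |L_\varphi|$, and by Proposition \ref{prop-Norm} it extends to a nonzero global section on $\GZip^\mu$ precisely when \eqref{formula-norm} holds. Hence any weight satisfying \eqref{formula-norm} lies in $\Ccal_{\zip}$, and the entire theorem is reduced to the combinatorial assertion that, for $\lambda \in \Ccal_{\GS}$ and $\alpha \in \Delta^P$,
\[
\sum_{w \in W_{L_0}} \sum_{i=0}^{r_\alpha - 1} p^{i+\ell(w)} \, \langle w\lambda, \sigma^i(\alpha^\vee) \rangle \leq 0 .
\]

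Proving this inequality is the main obstacle. Put $\Lambda \colonequals \sum_{w \in W_{L_0}} p^{\ell(w)} w\lambda$; by linearity the left-hand side equals $\sum_{i=0}^{r_\alpha-1} p^i \langle \Lambda, \sigma^i(\alpha^\vee) \rangle$. A pairing argument over cosets $\{ w, s_\beta w \}$ inside $W_{L_0}$ (using $p > 1$ together with the $L_0$-dominance of $\lambda$) shows that $\Lambda$ is $L_0$-antidominant, and combined with the $\Ccal_{\GS}$-conditions one gets $\langle \Lambda, \gamma^\vee \rangle \leq 0$ for every simple root $\gamma \notin I$. The genuine difficulty, and the reason one cannot argue term by term, is that when $P$ is not defined over $\FF_p$ --- the typical case for the groups considered here --- the $\sigma$-orbit $\{ \sigma^i(\alpha) \}_{0 \leq i < r_\alpha}$ of $\alpha \in \Delta^P$ stays inside the $\sigma$-stable set $\Delta^{P_0}$ but may pass through $I \setminus I_0$, where $\langle \Lambda, \sigma^i(\alpha^\vee) \rangle$ can be strictly positive. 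The plan to handle this is to exploit the geometric weighting $1, p, p^2, \dots$ along the cyclic orbit: group each maximal run of ``bad'' indices $i$ (those with $\sigma^i(\alpha) \in I$) together with the adjacent ``good'' index of larger $p$-weight --- such an index exists since $\sigma^{r_\alpha}(\alpha) = \alpha \in \Delta^P$ --- and show that each block sums to $\leq 0$, the key point being that the top coefficient of the block dominates the sum of the remaining ones. Equivalently, I expect one can repackage the whole computation through $\lambda_0 \colonequals w_{0,I_0} w_{0,I} \lambda$ and $\Delta^{P_0}$, in the spirit of Theorem \ref{thm-norm-low} (legitimately and unconditionally here, since we have reduced to groups of $\Res_{\FF_q/\FF_p}$-type), and then check that the resulting inequalities are implied by membership in $\Ccal_{\GS}$. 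Because \eqref{formula-norm} is a system of non-strict inequalities, this argument covers the whole closed cone $\Ccal_{\GS}$, and chaining $\Ccal_{\GS} \subseteq \Ccal_{\hw} \subseteq \Ccal_{\zip}$ yields the theorem.
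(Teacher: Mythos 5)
The first and decisive step of your plan --- the inclusion $\Ccal_{\GS}\subseteq \Ccal_{\hw}$ --- is false, and no block-grouping along the $\sigma$-orbit can repair it, because the full sum in \eqref{formula-norm} can be strictly positive on $\Ccal_{\GS}$. Concretely, take $n=2$, $G=\Res_{\FF_{p^2}/\FF_p}(\GL_{2,\FF_{p^2}})$ and parabolic type $R=\{2\}$, so that $P_0=B$, $W_{L_0}=\{1\}$, $\Delta^P=\{\alpha_1\}$ and $r_{\alpha_1}=2$. For $\lambda=(x_1,x_2)$ the inequality \eqref{formula-norm} reads $x_1+p\,x_2\leq 0$, while $\Ccal_{\GS}$ is cut out by $x_1\leq 0$ and $x_2\geq 0$. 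The weight $(0,1)$ lies in $\Ccal_{\GS}$, and also in $\Ccal_{\zip}=\Ccal_{E_2}\cap X^*_{+,I}(T)$ by Theorem \ref{thm-cone-conj}, yet $x_1+p\,x_2=p>0$. This is precisely the phenomenon you flag as the ``genuine difficulty'': when $P$ is not defined over $\FF_p$, the orbit of $\alpha\in\Delta^P$ passes through $I$, where the $\Ccal_{\GS}$-conditions force $\langle\lambda,\sigma^i(\alpha^\vee)\rangle\geq 0$, and that positive contribution can carry the dominant power of $p$. A false inequality cannot be rescued by regrouping its terms, so the route through $\Norm_{L_\varphi}(f_{\high,\lambda})$ does not work.

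The viable norm-theoretic route is the one you mention only in passing at the end: use the lowest weight vector. Replacing $\lambda$ by $\lambda_0=w_{0,I_0}w_{0,I}\lambda$ in \eqref{formula-norm-low} introduces exactly the signs $\delta_R$; in the $A_1$-type case the inequalities become $\sum_{i=0}^{n-1}p^i\delta_R^{(i+j)}x_{i+j}\leq 0$, and on $\Ccal_{\GS}$ one has $\delta_R^{(m)}x_m\leq 0$ for every $m$, so each summand is already nonpositive --- no pairing or grouping is needed. This gives $\Ccal_{\GS}\subseteq\Ccal_{\lw}\subseteq\Ccal_{\zip}$ for the groups relevant to this paper. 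Two caveats remain: the theorem as quoted holds for an arbitrary connected reductive group over $\FF_p$, whereas Theorem \ref{thm-norm-low} is conditional on the technical hypothesis of \cite{Imai-Koskivirta-zip-schubert}, so even the corrected argument only recovers the special case used here; and the present paper does not prove the statement at all but imports it from \emph{loc.\ cit.}, where the general case is established by a different method.
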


\section{\texorpdfstring{$p$-cones}{}}\label{p-cone-sec}

In this section, we introduce the theory of $p$-cones. This theory is for the most part independent of group-theoretical considerations, but it is tailored to the case of a Weyl restriction $\Res_{\FF_p^n/\FF_n}(\GL_{2,\FF_{p^n}})$ for a fixed integer $n\geq 1$. As we explained in section \ref{Shim-A1-type-sec} and \ref{sec-zipcone}, we can reduce the proofs of our results to this fundamental case.

\subsection{Preliminaries}
\label{thegroup}
Let $n\geq 1$ be an integer and let $G$ be the group
\begin{equation}
    G\colonequals \Res_{\FF_p^n/\FF_n}(\GL_{2,\FF_{p^n}}).
\end{equation}
Over the algebraic closure $k$, the group $G_k$ decomposes as a direct product of $n$ copies of $\GL_{2,k}$. We order the factors so that the action of the Frobenius $\sigma\in \Gal(k/\FF_p)$ is given on $G(k)$ by
\begin{equation}\label{gal-inert}
{}^\sigma (x_1, \dots ,x_n) \colonequals (\sigma(x_{2}), \dots ,\sigma(x_{n}),\sigma(x_1))
\end{equation}
where $\sigma(x)$ is the usual Frobenius action on $x\in \GL_2(k)$. Let $T$ (resp. $B$) denote the maximal torus (resp. Borel subgroup) consisting of $n$-tuples of diagonal (resp. lower-triangular) matrices. It is clear that the Borel pair $(B,T)$ is defined over $\FF_p$. The characters of $T$ form a $\ZZ$-module of rank $2n$. For a tuple $\mathbf{k}=(k_1,\dots,k_n)$, identify $\mathbf{k}$ with the character
\begin{equation}
  \chi_{\mathbf{k}} \colon T\to \GG_{\mathrm{m}}, \quad \left( \left(\begin{matrix}
        x_1 & \\ & y_1
    \end{matrix} \right), \dots , \left(\begin{matrix}
        x_n & \\ & y_n
    \end{matrix} \right)  \right) \mapsto \prod_{i=1}^n x_i^{k_i}.
\end{equation}
Set $\Lambda$ for the $\ZZ$-submodule of $X^*(T)$ consisting of characters of this form. Then we have a decomposition
\begin{equation}
    X^*(T) = X^*(G) \oplus \Lambda.
\end{equation}
As explained in section \ref{sec-torsion}, we ignore the torsion line bundles corresponding to characters in $X^*(G)$. Note that if $\Ccal\subset X^*(T)$ is a subcone containing $X^*(G)$, then we can write $\Ccal=X^*(G)\oplus (\Ccal\cap \Lambda)$. Thus, for simplicity we may intersect all subcones of $X^*(T)$ defined in the previous section with the submodule $\Lambda$. We identify $\Lambda$ with $\ZZ^n$ via the map $\mathbf{k}\mapsto \chi_{\mathbf{k}}$. For these reasons, we will define in the next section the notion of $p$-cone as a certain subcone of $\ZZ^n$.

We identify the Weyl group $W\colonequals W(G,T)$ with $W=\{\pm 1\}^n$. The maximal element $w_0\in W$ corresponds to $(-1,\dots,-1)$. An element $\varepsilon = (\varepsilon_1,\dots,\varepsilon_n)\in W$ acts on $\Lambda=\ZZ^n$ by 
\begin{equation}
    \varepsilon \cdot \mathbf{k} = (\varepsilon_1 k_1, \dots , \varepsilon_n k_n)
\end{equation}
for all $\mathbf{k}=(k_1, \dots , k_n)\in \ZZ^n$. The Frobenius element $\sigma\in \Gal(k/\FF_p)$ acts on $\Lambda=\ZZ^n$ by the rule
\begin{equation}
    \sigma \cdot \mathbf{k} = (k_n, k_1, \dots ,k_{n-1}).
\end{equation}
Write $e_1,\dots,e_n$ for the standard basis of $\ZZ^n$. The above formula shows that $\sigma e_i = e_{i+1}$ (where $e_{n+1}=e_1$). We put
\begin{equation}
    E_n\colonequals \{1,\dots,n\}.
\end{equation}
For any subset $S\subset E_n$ and any $i\in E_n$, set
\begin{equation}
    \delta_S^{(i)}\colonequals \begin{cases}
        -1 & \textrm{if }i\in S \\
        1 & \textrm{if }i\notin S.
    \end{cases}
\end{equation}
Define an element $\delta_S\in \{\pm 1\}^n$ by $\delta_S=(\delta_S^{(1)},\dots,\delta_S^{(n)})$. Write $\Pcal(E_n)$ for the powerset of $E_n$. The map $S\mapsto \delta_S$ is a bijection
\begin{equation}\label{delta-bij}
   \delta \colon \Pcal(E_n) \to W.
\end{equation}
The inverse of $\delta$ is the map that takes $\varepsilon\in \{\pm 1\}^n$ to the subset $\{i\in E_n \ | \ \varepsilon_i=-1\}$. 

We fix a subset $R\subset E_n$ and call it a parabolic type. The choice of $R$ gives rise to a parabolic subgroup $P=P_R$ of $G_k$ defined as $P=P_1\times \dots \times P_n$ where $P_i$ is the lower-triangular Borel of $\GL_{2,k}$ if $i\notin R$ and $P_i=\GL_{2,k}$ for $i\in R$. In particular, $P_{\emptyset} = B$ and $P_{E_n}=G$. It is the parabolic subgroup attached to the cocharacter $\mu_R\colon \GG_{\mathrm{m},k}\to G_k$ which is trivial on the factors $i\in R$ and is given by
\begin{equation}
    z\mapsto \left( \begin{matrix}
        z & \\ & 1
    \end{matrix}\right)
\end{equation}
on the factors $i\notin R$. The pair $(G,\mu)$ gives rise to a stack of $G$-zips as explained in section \ref{EO-Gzip-sec}. The element $w_{0,I}\in W_I$ corresponds to $\delta_R\in \{\pm 1\}^n$.

Independently of the theory of Shimura varieties, all the group-theoretical objects $\Ccal_{\pha,w}$, $\Ccal_{\flag,w}$, $\Ccal_{\zip}$, $\Ccal_w^{\cap,+}$ are defined as in the previous section and they are subcones of $X^*(T)$ containing $X^*(G)$. For applications to Shimura varieties, we need only remember that the group $G$ was slightly modified. In this section, we will replace all indices $w$ by the subset $S\subset E_n$ that $w$ corresponds to via the bijection \eqref{delta-bij}. Therefore, we write $\Ccal_{\pha,S}$, $\Ccal_{\flag,S}$, $\Ccal_S^{\cap,+}$ for the corresponding cones. To reiterate, we implicitly consider these subcones of $X^*(T)$ as subsets of $\ZZ^n$ by intersecting them with $\Lambda$ and identifying $\Lambda=\ZZ^n$. By abuse of language, a subset $S\subset E_n$ will sometimes be referred to as a stratum.

Even though this section is completely group-theoretical, we will sometimes mention the connection to Shimura varieties. Since the letter $S$ will always denote a subset of $E_n$, we will denote by $X$ the special fiber of a Hodge-type Shimura variety of $A_1$-type and rank $n$. To make the choice of the parabolic type explicit, we sometimes write $X=X_R$. The flag strata are denoted by $\Flag(X_R)_S$ for $S\subset E_n$. The cone $\Ccal_{K,w}$ and $\Ccal_K$ will be denoted by $\Ccal_{X,S}$ and $\Ccal_X$ respectively. Furthermore, to lighten the notation, we write $\Vcal(\mathbf{k})$ for the automorphic vector bundle $\Vcal_I(\lambda)$ where $\lambda=\chi_{\mathbf{k}}$. Similarly, we write $\Lcal(\mathbf{k})$ instead of $\Vcal_{\flag}(\lambda)$.

\subsection{Definition} \label{subsec-pcones}
For a tuple $(x_1,\dots,x_n)$ and any integer $m\in \ZZ$, we define $x_m$ as the element $x_r$ where $r$ is the unique integer satisfying $r\equiv m \pmod{n}$ and $1\leq r \leq n$ (we say simply that the index $m$ is taken modulo $n$). Let $d\in E_n$ be an integer and $\varepsilon =(\varepsilon_1,\dots,\varepsilon_{n})\in \{\pm 1\}^n$. For all $x=(x_1,\dots,x_n)\in \ZZ^n$, we set:
\begin{equation}
    F^{(d)}_{\varepsilon}(x)\colonequals \sum_{i=0}^{n-1} p^{i} \varepsilon_{i+d}  x_{d+i}
\end{equation}
where the index of $d+i$ is taken modulo $n$. We call $F^{(d)}_{\varepsilon}(x)$ the $p$-expression with starting index $d$ and signs $\varepsilon_1,\dots,\varepsilon_{n}$. Similarly, if $T\subset E_n$ is a subset and $\varepsilon=\delta_T$ is the element of $W$ corresponding to $T$, we write $F^{(d)}_T(x)$ instead of $F^{(d)}_{\varepsilon}(x)$.

\begin{definition}
We say that a cone $C\subset \ZZ^n$ is a $p$-cone if it can be defined by finitely many inequalities of the type 
\begin{equation}
    F^{(d)}_{\varepsilon}(x)\leq 0, \qquad x\in \ZZ^n
\end{equation}
for $d\in E_n$ and $\varepsilon\in \{\pm 1\}^n$.
\end{definition}

To motivate this definition, we consider the cone of partial Hasse invariants $\Ccal_{\pha,S}$ (Definition \ref{def-Cphaw}) and show that it is a $p$-cone. We start with some notation. Let $(x_1,\dots, x_n)$ be a sequence of elements in a ring. For any $i,j\in \ZZ$, we define an element $J_{i, j}(x)$ as follows: If $i\equiv j+1 \pmod{n}$, then we set $J_{i, j}(x)=1$. Otherwise, let $j'$ denote the smallest element $\geq i$ such that $j'\equiv j \pmod{n}$. Put:
\begin{equation}
    J_{i, j}(x) \colonequals \prod_{k=i}^{j'} x_k.
\end{equation}
We then have the following lemma, which is an immediate computation left to the reader:

\begin{lemma}\label{lem-Adj}
For tuples $a=(a_1,\dots,a_n)$ and $b=(b_1,\dots, b_n)$ of elements of a ring, let $M$ be the following matrix
\begin{equation}
  M(a,b) :=  \left(
\begin{matrix}
    a_1 & -b_1 & & \\
    &\ddots&\ddots& \\
    &&\ddots& -b_{n-1} \\
    -b_n&&&a_n 
\end{matrix}
    \right).
\end{equation}
Then, for all $1\leq i,j\leq n$, the $(i,j)$-coefficient of the adjoint matrix $\Adj(M)$ is given by $J_{j+1,i-1}(a) J_{i,j-1}(b)$. Furthermore, the determinant of $M$ is given by
\begin{equation}
    \det(M)=\prod_{i=1}^n a_i - \prod_{i=1}^n b_i.
\end{equation}
\end{lemma}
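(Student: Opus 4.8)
\emph{Strategy.} The plan is to read off both formulas from the Leibniz (permutation) expansion of a determinant, exploiting the sparsity of $M:=M(a,b)$: in row $k$ its only nonzero entries lie in columns $k$ and $k+1\bmod n$, equal to $a_k$ and $-b_k$ respectively. For the determinant itself I would observe that a nonzero summand $\sgn(\sigma)\prod_k M_{k,\sigma(k)}$ of $\det M$ forces $\sigma(k)\in\{k,\,k+1\bmod n\}$ for every $k$, and then use a propagation principle: if $\sigma(k)\equiv k+1$ for some $k$, then column $k+1$ is already occupied, forcing $\sigma(k+1)\equiv k+2$, and iterating around the cycle shows $\sigma$ is the $n$-cycle $k\mapsto k+1$. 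Hence only $\sigma=\mathrm{id}$ and $\sigma=(1\,2\,\cdots\,n)$ survive, contributing $\prod_i a_i$ and $(-1)^{n-1}\prod_i(-b_i)=-\prod_i b_i$; their sum is the claimed value of $\det M$.

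For the adjoint I would begin from the cofactor-sum identity $\Adj(M)_{ij}=(-1)^{i+j}\det\bigl(M^{(j,i)}\bigr)=\sum_{\sigma:\,\sigma(j)=i}\sgn(\sigma)\prod_{k\neq j}M_{k,\sigma(k)}$, the sum taken over permutations $\sigma$ of $E_n$ with $\sigma(j)=i$ (where $M^{(j,i)}$ denotes $M$ with row $j$ and column $i$ deleted, so that the factor $M_{j,i}$ is absent from the product). Running the sparsity analysis over the rows $k\neq j$, the propagation principle again applies, except that a run of forced off-diagonal choices can now terminate only at the missing row $j$. Therefore a contributing $\sigma$ is the identity on a cyclic arc $\{j+1,\dots,j+t\}$ and the shift $k\mapsto k+1$ on the complementary arc $\{j+t+1,\dots,j+n-1\}$; the unique free column it leaves must be the deleted one, which forces $j+t+1\equiv i\pmod n$. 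This determines $t\in\{0,\dots,n-1\}$ uniquely, so exactly one $\sigma$ contributes.

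It then remains to evaluate that one term. On the complementary arc together with $\{j\}$ the permutation $\sigma$ is a single $(n-t)$-cycle, so $\sgn(\sigma)=(-1)^{n-t-1}$; the product over $k\neq j$ supplies the factors $a_k$ for $k\in\{j+1,\dots,j+t\}$ and $-b_k$ for the $n-1-t$ indices $k\in\{j+t+1,\dots,j+n-1\}$, i.e. a sign $(-1)^{n-1-t}$ times products of $a$'s and $b$'s. Using $i-1\equiv j+t$ and $i\equiv j+t+1\pmod n$, together with the convention $J_{a,b}(x)=1$ when $a\equiv b+1\pmod n$ — which absorbs the empty-arc cases $t=0$ ($a$-arc empty) and $t=n-1$ ($b$-arc empty) — the $a$-factors multiply to $J_{j+1,i-1}(a)$ and the unsigned $b$-factors to $J_{i,j-1}(b)$. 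Finally $(-1)^{n-t-1}(-1)^{n-1-t}=(-1)^{2(n-t-1)}=1$, so the signs cancel and $\Adj(M)_{ij}=J_{j+1,i-1}(a)\,J_{i,j-1}(b)$.

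The only actual work is the cyclic index bookkeeping in the last step; the one point that genuinely requires care is keeping the reductions modulo $n$ consistent and checking that the convention $J_{a,b}=1$ for $a\equiv b+1$ correctly handles the two degenerate boundary cases (which are exactly $i\equiv j$, giving the diagonal entry, and $i\equiv j+1$, giving the all-shift term). There is no conceptual obstacle, and carrying out the argument once for $n=2$ already displays the sign cancellation and the clean disappearance of the factor $(-1)^{i+j}$ — which is no doubt why the statement is left to the reader.
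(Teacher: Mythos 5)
Your proof is correct. The paper explicitly leaves this lemma as ``an immediate computation left to the reader,'' so there is no argument in the paper to compare against; the Leibniz-expansion route you take is the natural one. The key steps all check out: sparsity of $M(a,b)$ does force $\sigma(k)\in\{k,k+1\bmod n\}$; the forward propagation (column $k+1$ occupied forces $\sigma(k+1)=k+2$) combined with the backward propagation (column $k_1$ occupied forces $\sigma(k_1-1)=k_1-1$) shows that the diagonal indices form a cyclic prefix arc starting at $j+1$ and the superdiagonal indices form a cyclic suffix arc ending at $j-1$, and the one unused column pins down where the arcs split, so exactly one $\sigma$ contributes; the $(n-t)$-cycle sign $(-1)^{n-t-1}$ cancels against the sign $(-1)^{n-1-t}$ from the $n-1-t$ factors $-b_k$; and the convention $J_{i,j}(x)=1$ when $i\equiv j+1\pmod n$ handles both degenerate arcs. (As a sanity check, for $n=2$ your formula reproduces $\Adj(M)=\left(\begin{smallmatrix}a_2 & b_1\\ b_2 & a_1\end{smallmatrix}\right)$, consistent with the convention $\Adj(M)_{ij}=\mathrm{cof}_{ji}(M)$ used implicitly in the paper.)
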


For $w\in W$, recall that $\Ccal_{\pha,w}$ is defined as the saturation of the cone $h_w(X^*_{+,w}(T))$ where $h_w\colon X^*(T)\to X^*(T)$, $\lambda \mapsto -w\lambda+pw_{0,I}w_0 \sigma^{-1}(\lambda)$. In our case, writing $S$ for the subset corresponding to $w$, we find that the matrix of the map $h_w$ (restricted to $\Lambda=\ZZ^n$) in the standard basis of $\ZZ^n$ is given by
\begin{equation}
  M :=  \left(
\begin{matrix}
    -\delta_S^{(1)} & -p\delta_R^{(1)} & & \\
    &\ddots&\ddots& \\
    &&\ddots& -p \delta_R^{(n-1)} \\
    -p\delta_R^{(n)}&&&-\delta_S^{(n)} 
\end{matrix}
    \right) = M(-\delta_S,\delta_R).
\end{equation}
From Lemma \ref{lem-Adj} above, we deduce the following proposition:
\begin{proposition}\label{prop-pha-cone}
The cone $\Ccal_{\pha,S}$ is a $p$-cone. Specifically, it is given by the following inequalities:
\begin{equation}
(-1)^{|R|} \left(\sum_{1\leq j\leq i-1} J_{j+i,i-1}(-\delta_{S}) J_{i,j-1}(\delta_R) p^{n+j-i} x_j + \sum_{i\leq j \leq  n} J_{j+i,i-1}(-\delta_{S}) J_{i,j-1}(\delta_R) p^{j-i} x_{j}\right) \leq 0
\end{equation}
for all $i\in S$.
\end{proposition}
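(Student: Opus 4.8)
The plan is to unwind Definition~\ref{def-Cphaw} for the group $G = \Res_{\FF_{p^n}/\FF_p}(\GL_{2,\FF_{p^n}})$: the cone $\Ccal_{\pha,S}$ will be the saturation of the image, under the linear map $h_w$, of a coordinate half-space cone, and inverting the matrix $M$ by means of Lemma~\ref{lem-Adj} will produce the asserted inequalities.

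First I would identify $E_w$ and $X^*_{+,w}(T)$ in type $A_1^n$. Here $\Phi^{+} = \{\alpha_1,\dots,\alpha_n\}$ consists of the $n$ simple roots, one per $\GL_2$-factor; the Weyl group is $\{\pm 1\}^n$; and $s_{\alpha_i}$ is the element with a single $-1$ in position $i$. Consequently, for $w = \delta_S$ the conditions $w s_{\alpha_i} < w$ and $\ell(w s_{\alpha_i}) = \ell(w)-1$ hold precisely when $\delta_S^{(i)} = -1$, that is, when $i\in S$; hence $E_w = \{\alpha_i \mid i\in S\}$. Since $\langle \chi_{\mathbf{k}},\alpha_i^{\vee}\rangle = k_i$ and every coroot annihilates $X^*(G)$, we obtain $X^*_{+,w}(T) = X^*(G)\oplus\{\mathbf{k}\in\Lambda \mid k_i\geq 0 \text{ for all }i\in S\}$. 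Working modulo $X^*(G)$ as in \S\ref{sec-torsion}, and recalling that $h_w$ acts on $\Lambda = \ZZ^n$ by the matrix $M$ displayed above (which in the notation of Lemma~\ref{lem-Adj} is $M(-\delta_S,p\delta_R)$), I conclude that $\Ccal_{\pha,S}$ is the $\QQ_{\geq 0}$-saturation inside $\ZZ^n$ of $M\!\left(\{\mathbf{k}\in\ZZ^n \mid k_i\geq 0,\ i\in S\}\right)$.

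Next I would transport the defining inequalities through $M^{-1}$. The cone $\{\mathbf{k}\mid k_i\geq 0,\ i\in S\}$ is a full-dimensional rational cone spanned over $\QQ_{\geq 0}$ by its lattice points, and $M$ is invertible over $\QQ$ (one computes $\det M = (-1)^{n+|S|} - (-1)^{|R|}p^n \neq 0$ from Lemma~\ref{lem-Adj}); hence the saturation in question equals $\{y\in\ZZ^n \mid (M^{-1}y)_i\geq 0 \text{ for all }i\in S\}$. Writing $M^{-1} = (\det M)^{-1}\Adj(M)$ and noting that $\sgn(\det M) = (-1)^{|R|+1}$ (the term $-(-1)^{|R|}p^n$ dominates), the inequality $(M^{-1}y)_i\geq 0$ is equivalent to $(-1)^{|R|}(\Adj(M)\,y)_i\leq 0$, which accounts for the overall sign $(-1)^{|R|}$ in the statement. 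Plugging in the adjoint entries from Lemma~\ref{lem-Adj} with $a = -\delta_S$ and $b = p\delta_R$, namely $(\Adj M)_{ij} = J_{j+1,i-1}(-\delta_S)\,J_{i,j-1}(p\delta_R)$, splitting $\sum_j (\Adj M)_{ij}\,y_j$ according to $i\leq j\leq n$ and $1\leq j\leq i-1$, and extracting from $J_{i,j-1}(p\delta_R)$ its explicit power of $p$ — equal to $p^{\,j-i}$ in the first range and $p^{\,n+j-i}$ in the second — then reproduces the two sums appearing in the Proposition, one inequality for each $i\in S$.

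Finally I would check that each of these inequalities is a $p$-expression in the sense of~\eqref{pexpr}. For a fixed $i\in S$, as $j$ ranges over $E_n$ the exponent of $p$ attached to $x_j$ equals $j-i$ (for $j\geq i$) or $n+j-i$ (for $j<i$); these values run bijectively over $\{0,1,\dots,n-1\}$ with $x_i$ carrying $p^0$, and each coefficient of $x_j$ is $\pm$ a power of $p$, the sign being the product of the $\pm 1$'s contributed by $J_{j+1,i-1}(-\delta_S)$, $J_{i,j-1}(\delta_R)$ and $(-1)^{|R|}$. Thus the $i$-th inequality has the form~\eqref{pexpr} with starting index $d = i$, and since there are exactly $|S|$ of them, $\Ccal_{\pha,S}$ is a $p$-cone (indeed an $S$-adapted one). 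The only genuinely delicate point is bookkeeping rather than conceptual: pinning down the sign $(-1)^{|R|}$ from $\sgn(\det M)$, matching the wrap-around case in the definition of $J_{i,j-1}$ to the second of the two sums, and confirming that the $p$-exponents exhaust $\{0,\dots,n-1\}$ exactly once; one must also be slightly careful that forming saturations commutes with the invertible map $M$ and with the passage from $X^*(T)$ to $\Lambda$, which is exactly where \S\ref{sec-torsion} and the non-vanishing of $\det M$ enter.
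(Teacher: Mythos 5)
Your proposal is correct and follows exactly the route the paper intends: unwind Definition~\ref{def-Cphaw} in type $A_1^n$ to get $E_w=\{\alpha_i: i\in S\}$, observe that $h_w$ acts on $\Lambda$ by $M(-\delta_S,p\delta_R)$, and invert via Lemma~\ref{lem-Adj}; the paper states this as an immediate deduction, and you have merely supplied the bookkeeping (sign of $\det M$, transport of the saturation through the invertible map, extraction of the $p$-powers from $J_{i,j-1}(p\delta_R)$), all of which checks out. Note that your $J_{j+1,i-1}(-\delta_S)$ is the correct reading of the factor printed as $J_{j+i,i-1}(-\delta_S)$ in the Proposition, which is a typo for the adjoint entry given in Lemma~\ref{lem-Adj}.
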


\subsection{\texorpdfstring{$S$-adapted $p$-cones}{}}

\begin{definition}
Let $S\subset E_n$ be a subset of cardinality $|S|=s$. We say that a $p$-cone $\Ccal$ is $S$-adapted if it is defined by exactly $s$ inequalities whose starting indices are the elements of $S$. In other words, we have
\begin{equation}
    \Ccal = \{ x\in \ZZ^n \ | \ F^{(i)}_{\varepsilon^{(i)}}(x)\leq 0, \ i\in S \}
\end{equation}
for certain elements $\varepsilon^{(i)}\in \{\pm 1 \}^n$ (where $i\in S$).
\end{definition}

It is easy to see from Proposition \ref{prop-pha-cone} that $\Ccal_{\pha,S}$ is an $S$-adapted $p$-cone. For a given subset $S\subset E_n$, in order to define an $S$-adapted $p$-cone, it suffices to give the elements $\varepsilon^{(i)}\in \{\pm 1 \}^n$ for each $i\in S$. Write $\varepsilon^{(i)}=\delta_{T^{(i)}}$ for a certain subset $T^{(i)}\subset E_n$. Thus, an $S$-adapted $p$-cone is uniquely determined by a function
\begin{equation}
    \gamma_{\Ccal}\colon S\to \Pcal(E_n), \quad i\mapsto T^{(i)}.
\end{equation}
This shows that there are exactly $2^{ns}$ such cones.

\begin{definition}
We say that an $S$-adapted $p$-cone $\Ccal$ is homogeneous if $\gamma_{\Ccal}$ is a constant function.
\end{definition}

Therefore, for any subset $S$, the $S$-adapted homogeneous $p$-cones are parametrized by subsets $T\subset E_n$. Specifically, for any subset $T\subset E_n$, there is a unique $S$-adapted homogeneous $p$-cone $\Ccal$ such that $\rho_\Ccal$ is the constant function with value $T$. In general, the partial Hasse invariant cones $\Ccal_{\pha,S}$ are not homogeneous.


\subsection{Chain diagrams} \label{subsec-chain}


In this section, we define certain diagrams as a visual aid in order to represent $p$-cones. We define the standard chain diagram $\Gamma_n$ (of length $n$) as a circular diagram of $n$ vertices numbered from $1$ to $n$, with connecting arcs between vertex $i$ and vertex $i+1$ (where $n+1$ is the same vertex as $1$). More generally, for given subsets $R\subset E_n$ (the parabolic type) and $S\subset E_n$, we define a diagram $\Gamma_n(R,S)$ as follows:
\medskip
\begin{bulletlist}
    \item We consider a $n$ vertices numbered from $1$ to $n$.
    \item We connect $i$ and $i+1$ with a dotted line whenever $i+1\notin S$ and $i\notin R$.
    \item When $i+1\notin S$ and $i\in R$, we connect $i$ and $i+1$ with a plain line.
\end{bulletlist}
\medskip
\noindent When $i+1\in S$, the points $i$ and $i+1$ are never connected. To help visualize the sets $S,R$, we draw a circle (resp. a square) around the vertices corresponding to $S$ (resp. $R$). We call the diagram $\Gamma_n(R,S)$ thus constructed a chain diagram of length $n$, parabolic type $R$ and stratum $S$. This diagram is simply a useful visual way to encode the datum of two subsets $S,R$ of $E_n$. For example, the figure below is the case $n=8$, $R=\{1,3\}$ and $S=\{3,6\}$.

\begin{figure}[H]
    \centering
\includegraphics[width=6.5cm]{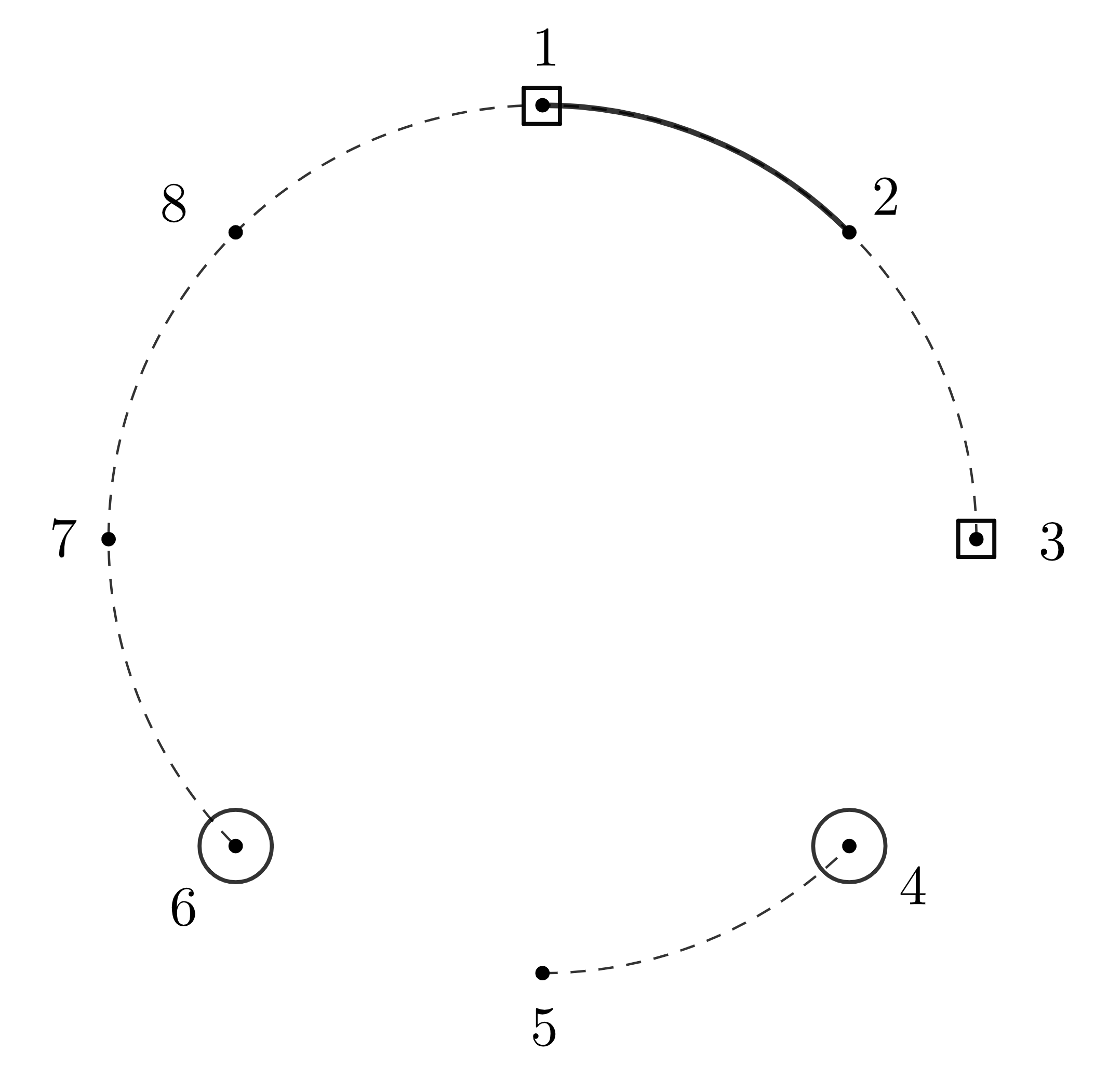}
    \caption{The chain diagram of parabolic type $\{1,3\}$ for the stratum $\{4,6\}$}
    \label{figdiag}
\end{figure}

Eventually, we will show that the vanishing of the $0$th cohomology groups on the flag stratum $\Flag(X_R)_S$ is controlled by the diagram $\Gamma_{n}(R,S)$.

\begin{definition}
Let $\Gamma=\Gamma_n(R,S)$ be the chain diagram of parabolic type $R$ and stratum $S$. We define the following:
\begin{definitionlist}
\item We say that a subset $C \subset E_n$ is connected in $\Gamma$ if any two points of $C$ are connected by a finite sequence of (dotted or plain) lines.
\item We say that $C$ is a connected component of $\Gamma$ if it is maximal among the connected subsets of $\Gamma$.
\item Suppose that $\Gamma$ is disconnected and let $C$ be a connected component of $\Gamma$. We define the head (resp. the tail) of $C$ as the unique element $x\in C$ such that $x+1\notin C$ (resp. $x-1\notin C$).
\end{definitionlist}
\end{definition}
Note that by definition, the tail of a connected component $C$ is the unique element of $S$ that lies in $C$. For example, the connected components of the diagram $\Gamma$ in Figure \ref{figdiag} are $C_1=\{4,5\}$ and $C_2=\{6,7,8,1,2,3\}$. The heads of $C_1$, $C_2$ are respectively $5$ and $3$. The tails of $C_1$, $C_2$ are respectively $4$ and $6$.

\subsection{\texorpdfstring{Admissible $p$-cones}{}} \label{subsec-admiss}

We continue our classification of $p$-cones by defining admissible $p$-cones below. The condition defining this notion is related to the theory of partial Hasse invariants explained in section \ref{part-Hasse-sec}. Again, fix a parabolic type $R\subset E_n$ and a stratum $S\subset E_n$. We explained in Remark \ref{rmk-triv-S} that certain line bundles $\Lcal(\mathbf{k})$ are trivial on the Zariski closure $\overline{\Flag}(X_R)_S$. To be specific, for $i\in E_n$ define
\begin{equation}\label{ha-def}
   \ha_{R,S}^{(i)} = \begin{cases}
        -e_i-p\delta^{(i-1)}_R e_{i-1} & \textrm{if }i\in S \\ 
        e_i-p\delta^{(i-1)}_R e_{i-1} & \textrm{if }i\notin S. 
    \end{cases}
\end{equation}
In short, we may write $\ha_{R,S}^{(i)} = \delta_S^{(i)} e_i-p\delta^{(i-1)}_R e_{i-1}$ for any $i\in E_n$. The cone $\Ccal_{\pha,S}$ is the saturation of the cone
\begin{equation}\label{CphaS-eq}
    C_{\pha,S} = \sum_{i\in S} \NN \ \ha_{R,S}^{(i)} \ + \ \sum_{i\notin S} \ZZ \ \ha_{R,S}^{(i)}.
\end{equation}
As we noted in Remark \ref{rmk-triv-S}, the line bundle $\Lcal(\mathbf{k})$ is trivial for any $\mathbf{k}$ in the $\ZZ$-module spanned by the weights $\ha_{R,S}^{(i)}$ for $i\in E_n\setminus S$. Write $\KK_{R,S}$ for the saturation of this $\ZZ$-module inside $\ZZ^n$. It is also a submodule of $\ZZ^n$. It coincides with the set of weights $\mathbf{k}\in \ZZ^n$ where all the inequalities defining $\Ccal_{\pha,S}$ (see Proposition \ref{prop-pha-cone}) vanish. We call the subgroup $\KK_{R,S}$ the kernel of the stratum $S$. When the choice of $R$ is clear, we simply write $\KK_S$ for this subgroup. For any $\mathbf{k}\in \KK_{R,S}$, the line bundle $\Lcal(\mathbf{k})$ is torsion on the Zariski closure $\overline{\Flag}(X_R)_S$. We may rewrite equation \eqref{CphaS-eq} as follows:
\begin{equation}\label{CphaS-eq2}
    C_{\pha,S} = \sum_{i\in S} \NN \ \ha_{R,S}^{(i)} \ + \KK_{R,S}.
\end{equation}

Since we want to study the various cones of mod $p$ automorphic forms on the various strata, it is natural to impose the condition that the kernel of $S$ is contained in the cone that we consider, since elements of $\KK_S$ correspond to torsion line bundles. This leads to the definition of admissibility of an $S$-adapted $p$-cone that we explain below.

\begin{definition}\label{def-admis}
Let $\Gamma_n(R,S)$ be the chain diagram of parabolic type $R$ and stratum $S$, and let $T\subset E_n$ be a subset.
\begin{definitionlist}
\item We say that $T$ is admissible (or $\Gamma$-admissible) if it satisfies the following:
\begin{bulletlist}
    \item If two vertices are connected by a dotted line, then one of them lies in $T$ and the other one lies in $E_n\setminus T$.
    \item If two vertices are connected by a plain line, then either both vertices lie in $T$ or both vertices lie in $E_n\setminus T$.
\end{bulletlist}
We denote by $\Adm(\Gamma)\subset \Pcal(E_n)$ the set of all $\Gamma$-admissible subsets.
\item Let $\Ccal\subset \ZZ^n$ be an $S$-adapted $p$-cone, and let $\gamma_{\Ccal}\colon S\to  \Pcal(E_n)$ be the corresponding function. We say that $\Ccal$ is admissible if $\gamma_{\Ccal}$ takes values in the subset $\Adm(\Gamma)$.
\end{definitionlist}
\end{definition}

For a chain diagram $\Gamma$, write $\pi_0(\Gamma)$ for the set of connected components. Then an admissible subset $T\subset E_n$ is entirely determined by the knowledge of the intersection $T\cap S$. Indeed, consider the tail $x$ of a connected component $C\subset \Gamma$. Then, if we know whether or not $x\in T$, then we can determine inductively which elements of $C$ lie in $T$ by using Definition \ref{def-admis}(a). We deduce that there are exactly $2^{|S|}$ admissible subsets in $E_n$. Similarly, an admissible $p$-cone is determined by a function $\gamma_{\Ccal}\colon S\to \Adm(\Gamma)$, so there are exactly $2^{|S|\times |S|}$ such cones. Finally, the number of homogeneous admissible $S$-adapted $p$-cones is also $2^{|S|}$, since $\gamma_{\Ccal}$ is constant in that case. We end this section by making the link between the above discussion regarding the kernel $\KK_S$ and the definition of admissibility:

\begin{lemma}\label{admiss-lem}
Let $\Ccal$ be an $S$-adapted $p$-cone. Then $\Ccal$ is admissible if and only if it satisfies the containment $\KK_{S}\subset \Ccal$.
\end{lemma}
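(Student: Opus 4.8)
The plan is to turn the containment $\KK_{S}\subset\Ccal$ into a linear‑algebra vanishing statement for the $p$‑expressions cutting out $\Ccal$, and then to recognise that statement as precisely the combinatorial constraint defining $\Gamma$‑admissibility.

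\emph{Step 1: reduce to vanishing on generators.} Write $\Ccal=\{x\in\ZZ^{n}\mid F^{(i)}_{\varepsilon^{(i)}}(x)\le 0,\ i\in S\}$ with $\varepsilon^{(i)}=\delta_{T^{(i)}}$ and $T^{(i)}=\gamma_{\Ccal}(i)$. Since $\KK_{S}$ is a subgroup of $\ZZ^{n}$ and each $F^{(i)}_{\varepsilon^{(i)}}$ is a linear form, ``$F^{(i)}_{\varepsilon^{(i)}}\le 0$ on $\KK_{S}$'' forces ``$F^{(i)}_{\varepsilon^{(i)}}\equiv 0$ on $\KK_{S}$'' (apply it to $v$ and to $-v$). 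Hence $\KK_{S}\subset\Ccal$ is equivalent to: $F^{(i)}_{\varepsilon^{(i)}}$ vanishes on $\KK_{S}$ for every $i\in S$. Because $\KK_{S}$ is by definition the saturation of $\sum_{j\in E_{n}\setminus S}\ZZ\,\ha_{R,S}^{(j)}$, and a linear form vanishes on the saturation of a subgroup iff it vanishes on that subgroup iff it vanishes on a generating set, this is in turn equivalent to
\[
F^{(i)}_{\varepsilon^{(i)}}\!\bigl(\ha_{R,S}^{(j)}\bigr)=0 \qquad\text{for all }i\in S,\ j\in E_{n}\setminus S .
\]

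\emph{Step 2: the pointwise computation.} Fix $i\in S$ and $j\in E_{n}\setminus S$; note that then $j\ne i$. The weight $\ha_{R,S}^{(j)}$ involves only the two basis vectors $e_{j}$ and $e_{j-1}$, and in $F^{(i)}_{\varepsilon^{(i)}}(x)=\sum_{l=0}^{n-1}p^{l}\varepsilon^{(i)}_{i+l}x_{i+l}$ the coefficients of $x_{j}$ and $x_{j-1}$ are $p^{\ell}\varepsilon^{(i)}_{j}$ and $p^{\ell-1}\varepsilon^{(i)}_{j-1}$, where $\ell\equiv j-i\pmod n$, $1\le\ell\le n-1$ (so they are consecutive powers of $p$, precisely because $j\ne i$). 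Substituting, only these two terms survive, and a one‑line computation gives
\[
F^{(i)}_{\varepsilon^{(i)}}\!\bigl(\ha_{R,S}^{(j)}\bigr)=\pm\,p^{\ell}\bigl(\varepsilon^{(i)}_{j}+\delta_{R}^{(j-1)}\varepsilon^{(i)}_{j-1}\bigr),
\]
so that $F^{(i)}_{\varepsilon^{(i)}}\!\bigl(\ha_{R,S}^{(j)}\bigr)=0$ if and only if $\varepsilon^{(i)}_{j}=-\delta_{R}^{(j-1)}\varepsilon^{(i)}_{j-1}$.

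\emph{Step 3: translation to the chain diagram.} Using $\varepsilon^{(i)}_{m}=\delta_{T^{(i)}}^{(m)}$, the relation $\varepsilon^{(i)}_{j}=-\delta_{R}^{(j-1)}\varepsilon^{(i)}_{j-1}$ reads: if $j-1\notin R$ — equivalently, $j-1$ and $j$ are joined by a dotted line in $\Gamma_{n}(R,S)$, which is the case here since $j\notin S$ — then exactly one of $j-1,j$ lies in $T^{(i)}$; and if $j-1\in R$ (plain line) then $j-1$ and $j$ are simultaneously in or out of $T^{(i)}$. As $j$ ranges over $E_{n}\setminus S$ the pairs $(j-1,j)$ range over exactly the edges of $\Gamma_{n}(R,S)$, so these are precisely the two bullet conditions of Definition~\ref{def-admis}(a). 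Thus $F^{(i)}_{\varepsilon^{(i)}}$ vanishes on all $\ha_{R,S}^{(j)}$ ($j\notin S$) iff $T^{(i)}\in\Adm(\Gamma_{n}(R,S))$; ranging over $i\in S$ and combining with Step~1, we conclude that $\KK_{S}\subset\Ccal$ iff $\gamma_{\Ccal}$ takes values in $\Adm(\Gamma_{n}(R,S))$, i.e. iff $\Ccal$ is admissible.

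I do not expect a genuine obstacle: the only delicate point is Step~2, namely getting the relative sign inside the parenthesis right so that it matches the dotted/plain dichotomy of Definition~\ref{def-admis} (and, correspondingly, keeping straight the sign conventions of \eqref{ha-def} and of the $\delta_S,\delta_R$ notation). Once one observes that $j\ne i$ forces the two relevant powers of $p$ in $F^{(i)}_{\varepsilon^{(i)}}$ to be adjacent, the computation collapses to a two‑term identity, and all the rest is a dictionary between $\pm 1$‑vectors and subsets of $E_{n}$.
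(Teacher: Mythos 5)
Your proof is correct and follows the same skeleton as the paper's: the paper likewise relies on the $\pm x$ observation that ``$\le 0$ on a subgroup'' forces ``$=0$ on the subgroup'' (your Step~1), and the remaining content --- that vanishing of the forms $F_{\Ccal}^{(d)}$ on $\KK_S$ is equivalent to the combinatorial admissibility of Definition~\ref{def-admis} --- is exactly what the paper dismisses with the phrase ``simply means'' and what your Steps~2--3 actually prove, by reducing to the generators $\ha_{R,S}^{(j)}$ and matching the resulting sign condition to the dotted/plain dichotomy. One point worth making explicit: the paper states two conflicting formulas for $\ha_{R,S}^{(j)}$, namely \eqref{ha-def} versus the formula appearing just after \eqref{CphaS-eq}, and only the latter (which is the one that agrees with the computation of $h_w(e_j)$ underlying Proposition~\ref{prop-pha-cone} and with the introduction) yields the ``$+$'' sign in your parenthesis and lines up with Definition~\ref{def-admis}; you implicitly used this corrected version, so your conclusion is right, but you should say which formula for $\ha_{R,S}^{(j)}$ you are working from.
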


\begin{proof}
For $d\in S$, denote simply by $F_{\Ccal}^{(d)}(x)\leq 0$ the $p$-expression with starting index $d$ corresponding to the subset $\gamma_{\Ccal}(d)$, i.e. the inequality $F_{\gamma_\Ccal(d)}^{(d)}(x)\leq 0$. Then, the condition that $\Ccal$ is admissible simply means that for all $d\in S$ and all $x\in \KK_S$, one has $F_{\Ccal}^{(d)}(x)=0$. Hence, when $\Ccal$ is admissible it contains $\KK_{S}$. Conversely, assume $\KK_S\subset \Ccal$ and let $x\in \KK_S$. Since $\KK_S$ is a subgroup, $-x$ also lies in $\KK_S$, which implies that for any $d\in S$ both $F_{\Ccal}^{(d)}(x)$ and $F_{\Ccal}^{(d)}(-x)$ are non-positive. Hence $F_{\Ccal}^{(d)}(x)=0$ and we deduce that $\Ccal$ is admissible.
\end{proof}

\subsection{Positivity}
Next, we consider another natural condition on $p$-cones that stems from the geometry of Shimura varieties. Recall (section \ref{sec-asymp}) that when $p$ goes to infinity, ampleness considerations imply that the open Griffiths--Schmid cone $\Ccal_{\GS}^{\circ}$ is contained "at the limit" in all the cones $\Ccal_{K,w}$. This imposes a "positive direction" for characters. In our case, the cone $\Ccal_{\GS}^{\circ}$ (intersected with $\Lambda$) is given by

\begin{equation}
\Ccal_{\GSsf}^{\circ}=\left\{ (x_1,\dots, x_n)\in \ZZ^n \ \relmiddle| \ 
\parbox{4cm}{
$x_i > 0 \ \textrm{ for }i\in R, \\
x_i < 0 \ \textrm{ for } i\in E_n\setminus R$}
\right\}.
\end{equation}
Taking inspiration from this discussion, we introduce the following positivity condition for $S$-adapted $p$-cones. Let $\Gamma$ be the chain diagram of parabolic type $R$ and stratum $S$. Let $\Ccal\subset \ZZ^n$ be an $S$-adapted $p$-cone and write $\gamma_{\Ccal}\colon S\to \Pcal(E_n)$ for the associated function.

\begin{definition} \label{def-posit} \ 
    \begin{definitionlist}
        \item Let $i\in E_n$ be an element. We say that a subset $T\subset E_n$ is $i$-positive if it satisfies
        \begin{equation}\label{cond-pos}
    i-1\in T \ \Longleftrightarrow \ i-1\in R.
\end{equation}
\item We say that $\Ccal$ is positive if for all $i\in S$, the set $\gamma_{\Ccal}(i)$ is $i$-positive.
    \end{definitionlist}
\end{definition}
Let $\Ccal\subset \ZZ^n$ be any $p$-cone (not necessarily $S$-adapted), defined by some $p$-expression $F^{(d_k)}_{\varepsilon^{(k)}}(x)\leq 0$ (for $k=1,\dots,m$) where $d_k\in E_n$ and $\varepsilon^{(k)}\in \{\pm 1\}^n$. We define the limit cone $\Ccal_{\infty}$ as the "limit" of the cone $\Ccal$ when we let $p$ go to infinity. It is the cone obtained by retaining the dominant coefficients of $p$ in each $p$-expression  $F^{(d_k)}_{\varepsilon^{(k)}}(x)$. Specifically, we write $\varepsilon^{(k)}=(\varepsilon^{(k)}_1,\dots , \varepsilon^{(k)}_n)$ for each $k$ and we put
\begin{equation}
    \Ccal_{\infty} = \{ x\in \ZZ^n \ | \ \varepsilon^{(k)}_{d_k-1} x_{d_k-1} \leq 0, \ k=1,\dots, m \}.
\end{equation}

\begin{proposition}
Let $\Ccal$ be an $S$-adapted $p$-cone. Then $\Ccal$ is positive if and only if it satisfies the containment $\Ccal_{\GS}^{\circ}\subset \Ccal_{\infty}$.
\end{proposition}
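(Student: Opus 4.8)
The plan is to reduce both sides of the equivalence to statements about a single coordinate. First I would make the limit cone $\Ccal_{\infty}$ explicit in the $S$-adapted case. By definition $\Ccal$ is cut out by the inequalities $F^{(i)}_{\varepsilon^{(i)}}(x)\leq 0$ for $i\in S$, with $\varepsilon^{(i)}=\delta_{\gamma_{\Ccal}(i)}$. In the $p$-expression $F^{(i)}_{\varepsilon^{(i)}}(x)=\sum_{j=0}^{n-1}p^{j}\varepsilon^{(i)}_{i+j}x_{i+j}$, the dominant power of $p$ is $p^{n-1}$, reached at $j=n-1$, and the index $i+(n-1)$ is congruent to $i-1$ modulo $n$. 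Hence $\Ccal_{\infty}=\{x\in\ZZ^n \mid \varepsilon^{(i)}_{i-1}x_{i-1}\leq 0,\ i\in S\}$. Because of the sign convention $\delta_T^{(j)}=-1\iff j\in T$, the $i$-th limit inequality reads $x_{i-1}\geq 0$ when $i-1\in\gamma_{\Ccal}(i)$ and $x_{i-1}\leq 0$ when $i-1\notin\gamma_{\Ccal}(i)$.

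Next I would translate positivity into the same language. By Definition~\ref{def-posit}, $\Ccal$ is positive precisely when, for every $i\in S$, the memberships $i-1\in\gamma_{\Ccal}(i)$ and $i-1\in R$ agree; equivalently, the $i$-th limit inequality is $x_{i-1}\geq 0$ when $i-1\in R$ and $x_{i-1}\leq 0$ when $i-1\notin R$. On the other side, $\Ccal_{\GS}^{\circ}=\{x\mid x_j>0\text{ for }j\in R,\ x_j<0\text{ for }j\in E_n\setminus R\}$ is manifestly nonempty (take $x_j=1$ for $j\in R$ and $x_j=-1$ otherwise), and this nonemptiness is what makes the converse implication go through.

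For the forward direction, assume $\Ccal$ positive and take $x\in\Ccal_{\GS}^{\circ}$. Fix $i\in S$: if $i-1\in R$ then $x_{i-1}>0$ and the $i$-th limit inequality is $x_{i-1}\geq 0$, which holds; if $i-1\notin R$ then $x_{i-1}<0$ and the inequality is $x_{i-1}\leq 0$, which again holds. Thus $x\in\Ccal_{\infty}$, giving $\Ccal_{\GS}^{\circ}\subset\Ccal_{\infty}$. For the converse, suppose $\Ccal$ is not positive, so there is $i\in S$ at which $i-1\in\gamma_{\Ccal}(i)$ and $i-1\in R$ disagree. Choose any $x\in\Ccal_{\GS}^{\circ}$. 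If $i-1\in\gamma_{\Ccal}(i)$ and $i-1\notin R$, then the $i$-th limit inequality is $x_{i-1}\geq 0$ while $x_{i-1}<0$; in the opposite mismatch it is $x_{i-1}\leq 0$ while $x_{i-1}>0$. Either way $x\notin\Ccal_{\infty}$, so $\Ccal_{\GS}^{\circ}\not\subset\Ccal_{\infty}$.

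I do not expect a genuine obstacle here: the argument is pure book-keeping once $\Ccal_{\infty}$ is written down. The only point that needs a moment of care is correctly identifying that, in the $p$-expression with starting index $i$, it is the coordinate $x_{i-1}$ (not some other one) that carries the leading power $p^{n-1}$, together with keeping the $\delta_T$-sign convention consistent between the definitions of positivity, of $\Ccal_{\GS}^{\circ}$, and of the limit cone; once these are aligned, both implications are immediate.
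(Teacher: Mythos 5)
Your proof is correct and takes essentially the same route as the paper's: both reduce $\Ccal_{\GS}^{\circ}\subset\Ccal_{\infty}$ to a per-index comparison between the sign of the coefficient of $p^{n-1}$ in $F^{(i)}_{T^{(i)}}$ (governed by whether $i-1\in\gamma_{\Ccal}(i)$) and the membership of $i-1$ in $R$. The paper states the last step as ``the result follows easily''; you spell it out, including the (needed) observation that $\Ccal_{\GS}^{\circ}$ is nonempty for the converse direction.
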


\begin{proof}
For each $i\in S$, write $\Ccal_i$ for the cone defined by the inequality $F^{(i)}_{T^{(i)}}(x)\leq 0$ where $T^{(i)}=\gamma_\Ccal(i)$. Thus, $\Ccal$ is the intersection of the cones $\Ccal_i$ for $i\in S$. Similarly, $\Ccal_{\infty}$ is the intersection of the cones $\Ccal_{i,\infty}$ for $i\in S$. Therefore, the inclusion $\Ccal_{\GS}^{\circ}\subset \Ccal_{\infty}$ is equivalent to the condition that for any $i\in S$, the cone $\Ccal_{\GS}^{\circ}$ is contained in $\Ccal_{i,\infty}$ for all $i\in S$. The coefficient in front of $p^{n-1}$ in the $p$-expression $F^{(i)}_{T^{(i)}}(x)$ is $-1$ when $i-1\in T^{(i)}$ and is $1$ when $i-1\notin T^{(i)}$. The result follows easily.
\end{proof}

For example, one can check easily that the $S$-adapted $p$-cone $\Ccal_{\pha,S}$ is positive.

\subsection{\texorpdfstring{Partial Hasse invariants and $p$-cones}{}}

As we saw in section \ref{subsec-admiss}, admissibility characterizes when an $S$-adapted $p$-cone $\Ccal$ contains the kernel $\KK_S$ of a stratum $S$. We explain here a stronger notion of admissibility which corresponds to the stronger containment $\Ccal_{\pha,S}\subset \Ccal$ where $\Ccal_{\pha,S}$ is the cone of partial Hasse invariants for the stratum $S$.

We say that a subset $T\subset E_n$ is Hasse-admissible if it is admissible and for all $i\in S \setminus T$, the following condition holds:
\begin{equation}\label{cond-strong}
    i-1\in T \ \Longleftrightarrow \ i-1\in R.
\end{equation}
Using the terminology of Definition \ref{def-posit}, the above condition means that $T$ is $i$-positive with respect to all the elements in $S\setminus T$. Let $\Ccal$ be an $S$-adapted $p$-cone. We say that $\Ccal$ is Hasse-admissible if the corresponding function $\gamma_{\Ccal}\colon S\to \Pcal(E_n)$ takes values in Hasse-admissible subsets. Hence, if $\Ccal$ is Hasse-admissible and positive, the set $\rho_\Ccal(i)$ is $j$-positive for all $j\in (S\setminus T)\cup\{i\}$.

\begin{proposition}
Let $\Ccal$ be an $S$-adapted $p$-cone. The containment $\Ccal_{\pha,S}\subset \Ccal$ holds if and only if $\Ccal$ is Hasse-admissible and positive.
\end{proposition}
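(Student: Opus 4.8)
The plan is to reduce the containment $\Ccal_{\pha,S}\subset\Ccal$ to a finite list of scalar inequalities on the generators of $C_{\pha,S}$ and then match these against the combinatorial conditions. Since $\Ccal$ is cut out by inequalities with integer (hence rational) coefficients it is saturated, so $\Ccal_{\pha,S}\subset\Ccal$ if and only if $C_{\pha,S}\subset\Ccal$. By the description $C_{\pha,S}=\sum_{i\in S}\NN\,\ha_{R,S}^{(i)}+\KK_{R,S}$ from \eqref{CphaS-eq2} together with the fact that $\Ccal$ is an additive monoid, this is in turn equivalent to the conjunction of (a) $\KK_{R,S}\subset\Ccal$ and (b) $\ha_{R,S}^{(i)}\in\Ccal$ for every $i\in S$. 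By Lemma~\ref{admiss-lem}, (a) is exactly the admissibility of $\Ccal$. Since Hasse-admissibility of $\Ccal$ contains admissibility by definition, it therefore suffices to prove the pointwise statement: \emph{for an admissible $S$-adapted $p$-cone $\Ccal$, condition (b) holds if and only if $\Ccal$ is Hasse-admissible and positive.}

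First I would expand (b). Write $T^{(j)}=\gamma_{\Ccal}(j)$ and $\varepsilon^{(j)}=\delta_{T^{(j)}}$; then $\ha_{R,S}^{(i)}\in\Ccal$ unwinds to the family of inequalities $F^{(j)}_{T^{(j)}}(\ha_{R,S}^{(i)})\le 0$ for all $j\in S$. Using $\ha_{R,S}^{(i)}=\delta_S^{(i)}e_i-p\delta_R^{(i-1)}e_{i-1}$ and the fact that $F^{(j)}_{T^{(j)}}(e_m)$ equals $\varepsilon^{(j)}_m$ times the power of $p$ given by the residue of $m-j$ modulo $n$, each such expression is a sum of two signed powers of $p$. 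There are two regimes. When $j=i$, the two monomials of $\ha_{R,S}^{(i)}$ occur in $p$-degrees $0$ and $n$; as $p^{n}>1$ the top-degree term dominates, so the inequality holds precisely when its leading sign is non-positive, which one checks is exactly the condition that $T^{(i)}$ be $i$-positive. Summing over $i\in S$, the diagonal inequalities hold for all $i$ if and only if $\Ccal$ is positive. When $j\ne i$, the two monomials of $\ha_{R,S}^{(i)}$ land in the \emph{same} $p$-degree, so $F^{(j)}_{T^{(j)}}(\ha_{R,S}^{(i)})$ is that power of $p$ times a sum of two signs, and it is $\le 0$ exactly when these two signs are not both unfavourable; this is a constraint tying the values of $\varepsilon^{(j)}$ at the indices $i$ and $i-1$ to $\delta_R^{(i-1)}$.

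To identify this off-diagonal constraint with the extra clause of Hasse-admissibility I would feed in the structure of admissible subsets: a $\Gamma_n(R,S)$-admissible set $T$ is determined on each connected component of $\Gamma_n(R,S)$ by whether the tail of that component — the unique element of $S$ it contains — lies in $T$, the membership propagating by switching along dotted edges and copying along plain edges. In particular, for $i\in S$ the predecessor $i-1$ is the head of the preceding component, so whether $i-1\in T^{(j)}$ is governed by whether the tail of that preceding component lies in $T^{(j)}$. Feeding this into the inequalities $F^{(j)}_{T^{(j)}}(\ha_{R,S}^{(i)})\le 0$ for $i\in S\setminus\{j\}$ and combining with the diagonal inequalities already shown to encode positivity, one sees that the whole system for a fixed $j$ is equivalent to requiring that $T^{(j)}$ be $i$-positive for every $i\in(S\setminus T^{(j)})\cup\{j\}$ — precisely the assertion that $T^{(j)}$ is Hasse-admissible and $j$-positive, as recorded in the paragraph preceding the proposition. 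Running this over all $j\in S$ and recombining with (a) yields the proposition.

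I expect the real work to lie in the off-diagonal bookkeeping of the previous paragraph: one must locate, for each ordered pair $(i,j)$ of elements of $S$, exactly which vertex of which connected component of $\Gamma_n(R,S)$ the index $i-1$ occupies, deal with the degenerate overlaps where $i\equiv j\pm 1\pmod n$ (and the small-$n$ cases), and verify that the implication-type inequalities produced by the two-sign analysis assemble, together with positivity, into the \emph{unconditional} $i$-positivity requirements defining Hasse-admissibility — with no gap in either implication. The trivial boundary cases $S=\emptyset$ (where $\Ccal=\ZZ^n$, $\KK_{R,S}=\ZZ^n$, and both sides hold vacuously) and $S=E_n$ are best disposed of at the outset.
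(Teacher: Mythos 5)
Your overall plan coincides with the paper's: reduce the containment to the two conditions (a) $\KK_{R,S}\subset\Ccal$ and (b) $\ha_{R,S}^{(i)}\in\Ccal$ for all $i\in S$, dispose of (a) by Lemma~\ref{admiss-lem}, and unwind (b) into the scalar inequalities $F^{(j)}_{T^{(j)}}(\ha_{R,S}^{(i)})\le 0$, treating separately the diagonal case $j=i$ (where the two monomials land in $p$-degrees $0$ and $n$ and the leading term forces $i$-positivity of $T^{(i)}$) and the off-diagonal case $j\ne i$ (where both monomials share a $p$-degree). This is exactly the paper's argument, which fixes a starting index $i$, assumes $T$ admissible, and computes $F_T^{(i)}(\ha_{R,S}^{(j)})$ for each $j\in S$. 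Two remarks on where your write-up deviates.

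First, your middle paragraph invoking the connected-component structure of $\Gamma_n(R,S)$ and propagation along dotted and plain edges is a detour you do not need, and neither are the degenerate-overlap cases $i\equiv j\pm 1\pmod n$ or small-$n$ cases you flag at the end. The off-diagonal inequality, once written out, reads $p^m(\delta_{S}^{(i)}\delta_{T}^{(i)}-\delta_R^{(i-1)}\delta_T^{(i-1)})\le 0$ for a single power $p^m$, $m\geq 1$; since the two signs are $\pm 1$ this is a bare boolean implication on $(i-1,i)$, and it \emph{is} (literally, with no combinatorics) the extra clause of Hasse-admissibility for $i\in S\setminus T^{(j)}$. Nothing about heads, tails or connected components enters.

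Second, be careful with the formula you quote for $\ha_{R,S}^{(i)}$. Equation \eqref{ha-def} writes $\ha_{R,S}^{(i)}=\delta_S^{(i)}e_i-p\delta_R^{(i-1)}e_{i-1}$, but this is inconsistent with the generator $h_w(e_i)=-\delta_S^{(i)}e_i-p\delta_R^{(i-1)}e_{i-1}$ coming from Definition~\ref{def-Cphaw}, with the introduction's conventions, and with the formula $\ha_{R,S}^{(j)}=e_j-p\delta_R^{(j-1)}e_{j-1}$ ($j\in S$) that the paper actually uses in this very proof. If you carry out the off-diagonal computation literally with the sign from \eqref{ha-def}, the resulting implication becomes the $i$-positivity condition for $i\in S\cap T^{(j)}$ rather than $i\in S\setminus T^{(j)}$, which is not the clause appearing in the definition of Hasse-admissibility. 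Your stated conclusion ($i$-positivity for $i\in(S\setminus T^{(j)})\cup\{j\}$) is correct, but it follows only after fixing that sign; with the formula as quoted the derivation would land on the wrong set.
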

\begin{proof}
Let $T\subset E_n$ be an admissible subset. Let $i\in S$ and write $\Ccal^{(i)}_T$ for the cone in $\ZZ^n$ defined by the inequality $F_T^{(i)}(x)\leq 0$. We will show the following: the inclusion $\Ccal_{\pha,S}\subset \Ccal^{(i)}_T$ holds if and only if $T$ is $j$-positive for all $j\in (S\setminus T)\cup\{i\}$.

Write $\delta_T=(\delta^{(1)}_T,\dots,\delta^{(n)}_T)$ for the characteristic function of $T$, i.e. $\delta^{(j)}_T = -1$ for $j\in T$ and $\delta^{(j)}_T = 1$ for $j\notin T$. Recall that we defined the weights $\ha_{R,S}^{(j)}$ for all $j\in E_n$ in \eqref{ha-def}. For $j\in S$ we have $\ha_{R,S}^{(j)} = e_j-\delta^{(j-1)}_R p e_{j-1}$. Since $T$ is admissible, we already know that $\KK_{R,S}\subset \Ccal^{(i)}_T$. It follows that the inclusion $\Ccal_{\pha,S}\subset \Ccal^{(i)}_T$ is satisfied if and only if $F_T^{(i)}(\ha_{R,S}^{(j)})\leq 0$ for all $j\in S$. For $j\neq i$, this inequality amounts to
\begin{equation}
    \delta_{R}^{(j-1)} \delta_{T}^{(j-1)} + \delta_{T}^{(j)} \leq 0,
\end{equation}
which is equivalent to the condition: If $j\notin T$, then $j-1\in R \ \Longleftrightarrow \ j-1 \in T$. On the other hand, when $j=i$, the inequality $F_T^{(i)}(\ha_{R,S}^{(i)})\leq 0$ amounts to
\begin{equation}\label{eq-lead}
    -p^n \delta_{R}^{(i-1)} \delta_{T}^{(i-1)}  + \delta_{T}^{(i)} \leq 0,
\end{equation}
which is equivalent to $\delta_{R}^{(i-1)} \delta_{T}^{(i-1)}=1$, thus to $i-1\in R \ \Longleftrightarrow \ i-1 \in T$. We have proved that $\Ccal_{\pha,S}\subset \Ccal^{(i)}_T$ holds if and only if $T$ is $j$-positive for all $j\in (S\setminus T)\cup\{i\}$. The result follows immediately.
\end{proof}

We know that $\Ccal_{\pha,S}$ is a positive admissible $S$-adapted $p$-cone. In particular, it is given by a certain function
\begin{equation}\label{rho-pha}
    \rho_{\pha,S}\colon S\to \Pcal(E_n).
\end{equation}
Concretely, this function is given as follows: Write $S^c$ for the complement of $S$ in $E_n$. For any $i\in S$, the set $\rho_{\pha,S}(i)$ is the set of elements $j\in E_n$ satisfying: the number of elements in the sets $R\cap [j,i-1[$ and $S^c\cap ]j,i[$ have different parity. This is simply a reformulation of Proposition \ref{prop-pha-cone}.

\section{Cohomology vanishing}\label{coho-van-sec}

The key observation of this paper is that intersection-sum cones of Shimura varieties of $A_1$-type follow a very clear pattern. For general Hodge-type Shimura varieties, these cones are combinatorially difficult to describe. However, the results of this paper seem to indicate that they have a deeper significance and admit a more conceptual definition.

\subsection{\texorpdfstring{The function $\Phi$}{}}

We fix a parabolic type $R\subset E_n$. We start with the following proposition:

\begin{proposition}\label{prop-unique}
Let $S\subset E_n$ be a subset. There exists a unique positive admissible homogeneous $S$-adapted $p$-cone.
\end{proposition}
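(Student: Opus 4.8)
The plan is to recast everything in terms of the chain diagram $\Gamma = \Gamma_n(R,S)$ and to reduce both existence and uniqueness to a short propagation argument along its connected components. First I would dispose of the degenerate case $S = \emptyset$: then an $S$-adapted $p$-cone is cut out by no inequalities, hence equals $\ZZ^n$, which is vacuously homogeneous, admissible and positive, so the statement is trivial. So assume $S \neq \emptyset$, and recall that a homogeneous $S$-adapted $p$-cone is precisely the datum of a single subset $T \subset E_n$ (the constant value of $\gamma_{\Ccal}$), the cone being $\{x \in \ZZ^n : F^{(i)}_T(x) \leq 0 \text{ for all } i \in S\}$. Unwinding the definitions, such a cone is admissible iff $T \in \Adm(\Gamma)$, and positive iff $T$ is $i$-positive for every $i \in S$, that is, iff $i-1 \in T \Leftrightarrow i-1 \in R$ for all $i \in S$ (indices taken modulo $n$). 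Thus I must show there is exactly one subset $T \subset E_n$ which is $\Gamma$-admissible and satisfies these $|S|$ positivity relations.

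Next I would describe $\Gamma$ explicitly. Since $S \neq \emptyset$, for every $s \in S$ there is no edge between $s-1$ and $s$, whereas every other consecutive pair carries an edge; hence $\Gamma$ is a disjoint union of arcs, one for each $s \in S$, the arc with tail $s$ consisting of $s$ together with all consecutive vertices up to but not including the next element of $S$, and its head $h$ being precisely that predecessor of an element of $S$ (recalling that the tail of a component is the unique element of $S$ it contains). In particular, as $C$ ranges over the components of $\Gamma$, the heads $h$ range bijectively over $\{i-1 : i \in S\}$. On a single arc $C$, all vertices other than the tail lie outside $S$, so admissibility (the dotted/plain rule) flips membership in $T$ across each dotted edge and preserves it across each plain edge; consequently the membership of the head $h$ in $T$ is determined by that of the tail $s$ via the parity of the number of dotted edges of $C$, this assignment is a bijection between the two possibilities, and fixing either value propagates by admissibility to a well-defined labelling of all of $C$.

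I would then combine these ingredients. The positivity relation attached to $i = h+1 \in S$ reads $h \in T \Leftrightarrow h \in R$, so it prescribes the membership of the head of each arc; by the bijectivity just noted this forces the membership of the tail of the same arc, and admissibility then forces $T$ on the whole arc. Since the arcs partition $E_n$, the subset $T$ is uniquely determined, giving uniqueness. For existence I would run the construction in the other direction: define $T$ arc by arc by declaring $h \in T$ iff $h \in R$ at each head and extending over the arc by the flip/preserve rule; then $T \in \Adm(\Gamma)$ by construction, and $T$ satisfies $h \in T \Leftrightarrow h \in R$ at every head $h$, i.e. all $|S|$ positivity relations hold. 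Hence the homogeneous $S$-adapted $p$-cone with constant value $T$ is positive and admissible, as required.

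The content here is purely combinatorial, so I do not expect a genuine obstacle; the only point requiring care is the bookkeeping with the cyclic indices modulo $n$ and the precise edge conventions of $\Gamma_n(R,S)$ — in particular verifying that the tail of a component is the unique element of $S$ it contains, that the heads are exactly the predecessors of elements of $S$, and that on each arc the flip/preserve parity depends only on $R$ and $S$. I would also note in passing that the argument determines $T$ explicitly (it matches the function $\Phi$ defined in the next subsection in the Hilbert--Blumenthal case $R = \emptyset$), and that admissibility of $T$ is exactly the containment $\KK_{R,S} \subset \Ccal$, so the cone produced automatically contains the kernel of the stratum $S$.
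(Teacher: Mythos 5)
Your proof is correct and follows essentially the same route as the paper's: parametrize the homogeneous cone by its constant value $T$, observe that positivity pins down $T$ on the heads $\{i-1 : i\in S\}$ of the connected components of $\Gamma_n(R,S)$, and then propagate along each component by the admissibility (flip/preserve) rule. Your version is merely more explicit about the degenerate case $S=\emptyset$ and about the head-to-tail bookkeeping, but there is no substantive difference.
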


\begin{proof}
Recall that a homogeneous $S$-adapted $p$-cone $\Ccal$ can be parametrized by a subset $T\subset E_n$ such that $\gamma_{\Ccal}$ is the constant function with value $T$. The positivity condition implies that for all elements $i\in S$, the subset $T$ is $i$-positive, which means that
\begin{equation}
    i-1\in T \ \Longleftrightarrow \ i-1\in R.
\end{equation}
The heads of the connected components are exactly the elements $i-1$ for $i\in S$. Therefore, the above equivalence determines completely which heads lie in $T$. But then, the admissibility condition implies that we know $T$ completely on each connected component. The result follows.
\end{proof}
Note that the proof of Proposition \ref{prop-unique} gives a way to construct the subset $T$: We first determine which heads of connected component are in $T$, and then we extend $T$ by admissibility. 
\begin{exa}\label{exa-T-compute}
Let us give an example for illustration. Let $n=8$, $R=\{1,3\}$, $S=\{4,6\}$ as in Figure \ref{figdiag}. The associated chain diagram has two connected components, whose heads are respectively $3$ and $5$. Of these two elements, only $3$ lies in $R$. Hence $3\in T$ and $5\notin T$. Using the admissibility, we deduce the rest of elements of $T$. We find in the end:
\begin{equation}
    T=\{3,4,6,8\}.
\end{equation}
Therefore, the unique positive admissible homogeneous $S$-adapted $p$-cone is defined by the inequalities:
\begin{align*}
 &p^5 x_1 + p^6 x_2 - p^7 x_3 - x_4 + p x_5 - p^2 x_6 + p^3 x_7 - p^4 x_8 \leq 0 \\
    &p^3x_1 + p^4x_2 - p^5 x_3 - p^6 x_4 + p^7 x_5 - x_6 + p x_7 - p^2 x_8 \leq 0.
\end{align*}
Recall that the starting indices are imposed by the indices in $S$, and the minus signs are assigned by the indices in $T$.
\end{exa}

For any subset $S\subset E_n$, we write $\Phi_R(S)$ for the subset $T\subset E_n$ corresponding to the unique positive admissible homogeneous $S$-adapted $p$-cone (for simplicity, we omit $n$ from the notation). For instance, in Example \ref{exa-T-compute} above, one has $\Phi_R(\{4,6\})=\{3,4,6,8\}$ for $n=8$ and $R=\{1,3\}$. This defines a function
\begin{equation}
    \Phi_R\colon \Pcal(E_n) \to \Pcal(E_n).
\end{equation}
We write $\Ccal_{R,S}$ for the unique positive, admissible $S$-adapted homogeneous $p$-cone. In other words, it is the $S$-adapted $p$-cone that corresponds to the constant function $\rho\colon S\to \Pcal(E_n)$ with value $\Phi_R(S)$. When the choice of $R$ is clear, we simply write $\Phi(S)$ for the set $\Phi_R(S)$ and $\Ccal_S$ for the cone $\Ccal_{R,S}$.

\subsection{Main result}

Fix a parabolic type $R\subset E_n$. Our main technical result is the following theorem.

\begin{theorem}\label{main-thm-hom}
Let $S\subset E_n$ be a subset. The intersection-sum cone $\Ccal^{\cap,+}_S$ is a positive, admissible $S$-adapted homogeneous $p$-cone. Hence, it coincides with the unique such cone $\Ccal_{R,S}$ afforded by Proposition \ref{prop-unique}. 
\end{theorem}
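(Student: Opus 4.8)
The plan is to prove Theorem \ref{main-thm-hom} by induction on $|S|$, using the recursive definition $\Ccal_S^{\cap,+} = \Ccal_{\pha,S} +_{\rm sat} \bigcap_{\alpha \in E_w} \Ccal_{Ss_\alpha}^{\cap,+}$ together with the explicit combinatorial description of all the relevant cones as $p$-cones. The base case $|S|=1$ is immediate: when $|S|=1$ the intersection-sum cone is by definition $\Ccal_{\pha,S}$, and Proposition \ref{prop-pha-cone} shows this is an $S$-adapted $p$-cone; one checks directly from the description of $\rho_{\pha,S}$ that for $|S|=1$ the partial Hasse cone is already homogeneous, positive and admissible (for a single starting index there is nothing to compare, so homogeneity is automatic, and positivity/admissibility of $\Ccal_{\pha,S}$ were observed in section \ref{subsec-admiss} and \ref{subsec-admiss}). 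So the base case reduces to $\Ccal_{\pha,S} = \Ccal_{R,S}$ when $|S|=1$, which follows by comparing the function $\rho_{\pha,S}$ with the construction of $\Phi_R(S)$ via heads of connected components.

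\textbf{Inductive step.} Assume the result for all strata of cardinality $<|S|$. For each $s \in S$ (equivalently each $\alpha \in E_w$, via the bijection between lower neighbors of $w$ and elements of $S$ for groups of $A_1$-type), the induction hypothesis gives that $\Ccal_{S\setminus\{s\}}^{\cap,+} = \Ccal_{R,S\setminus\{s\}}$ is the homogeneous $(S\setminus\{s\})$-adapted $p$-cone with constant value $\Phi_R(S\setminus\{s\})$. Thus $\Ccal_{S\setminus\{s\}}^{\cap,+}$ is cut out by $|S|-1$ inequalities $F^{(i)}_{\Phi_R(S\setminus\{s\})}(x) \leq 0$ for $i \in S\setminus\{s\}$. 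Taking the intersection over all $s \in S$, the cone $\bigcap_{s\in S} \Ccal_{S\setminus\{s\}}^{\cap,+}$ is cut out by all the inequalities $F^{(i)}_{\Phi_R(S\setminus\{s\})}(x) \leq 0$ as $i$ ranges over $S\setminus\{s\}$ and $s$ ranges over $S$; for each fixed starting index $i \in S$ this produces (a priori) $|S|-1$ distinct inequalities, one for each $s \neq i$. The crux of the argument is a \emph{redundancy / collapse} lemma: for a fixed starting index $i \in S$, among the inequalities $F^{(i)}_{\Phi_R(S\setminus\{s\})}(x)\leq 0$ with $s \in S\setminus\{i\}$, all but the ``best'' one are implied by the others together with the partial Hasse inequalities, and moreover the surviving one has sign pattern exactly $\Phi_R(S)$ at starting index $i$. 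I would prove this by a careful analysis of chain diagrams: removing $s$ from $S$ merges the connected component of $\Gamma_n(R,S)$ whose tail is $s$ with its neighbor, and the admissibility-determined sign pattern on that merged component is pinned down by the head; comparing $\Phi_R(S\setminus\{s\})$ for varying $s$ against $\Phi_R(S)$ restricted to the component containing $i$ reduces everything to the single component of $i$, where $\Phi_R(S\setminus\{s\})$ and $\Phi_R(S)$ already agree except possibly near $s$, and the discrepancy is absorbed by adding a nonnegative multiple of the partial Hasse weight $\ha_{R,S}^{(s)}$ (which is legitimate since $s\in S$, so $\NN\,\ha_{R,S}^{(s)} \subset \Ccal_{\pha,S}$). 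Taking the saturated sum with $\Ccal_{\pha,S}$ is then exactly what converts the intersection into the homogeneous cone $\Ccal_{R,S}$: the partial Hasse cone supplies the positive-direction corrections, and the result is the $p$-cone with constant value $\Phi_R(S)$.

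\textbf{Finishing.} Once one shows $\Ccal_S^{\cap,+}$ is cut out by the $|S|$ inequalities $F^{(i)}_{\Phi_R(S)}(x)\leq 0$, $i\in S$, it is by construction an $S$-adapted homogeneous $p$-cone; it is positive and admissible because $\Phi_R(S)$ was \emph{defined} to be the sign set of the unique positive admissible homogeneous $S$-adapted $p$-cone (Proposition \ref{prop-unique}), so the uniqueness statement identifies it with $\Ccal_{R,S}$ automatically. Two technical points need care. First, one must verify that the recursion $\Ccal_S^{\cap,+} = \Ccal_{\pha,S} +_{\rm sat}\bigcap \Ccal_{S\setminus\{s\}}^{\cap,+}$ does use precisely the lower neighbors indexed by $S$ and not some larger set $E_w$; for $A_1$-type groups with the product structure of section \ref{thegroup} this is exactly the statement that the lower neighbors of $\delta_S$ in the Weyl group $\{\pm 1\}^n$ are the $\delta_{S\setminus\{s\}}$ for $s\in S$ with $\ell$ dropping by one, which holds since $\ell(\delta_S) = |S|$. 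Second, one must confirm that the saturated sum of a $p$-cone with $\Ccal_{\pha,S}$ stays a $p$-cone of the predicted shape rather than acquiring extra facets; this is where I expect the main obstacle, since sums of polyhedral cones are generically not describable by a bounded number of the original-type inequalities — the argument must exploit that $\Ccal_{\pha,S}$ and the intersection share the same lattice $\KK_{R,S}$ on which all inequalities vanish (Lemma \ref{admiss-lem}), so that everything happens in the rank-$|S|$ quotient $\ZZ^n/\KK_{R,S}$ where the combinatorics of the chain diagram becomes rigid enough to force the collapse. I would organize section 5 so that this collapse lemma is isolated and proved first via the explicit formulas of Lemma \ref{lem-Adj} and the chain-diagram bookkeeping, after which Theorem \ref{main-thm-hom}, and hence Theorems B, C, D, follow formally.
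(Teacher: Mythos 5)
Your high-level scaffolding (induction on $|S|$, identifying lower neighbours of $\delta_S$ with the subsets $S\setminus\{s\}$, using the common kernel $\KK_{R,S}$, the positivity/admissibility bookkeeping) is consonant with the paper's. But the proposed ``collapse lemma'' is not what the paper does, and as stated it is false in the case the paper explicitly singles out as hard: when $S$ is \emph{irreducible}, meaning no $s\in S$ is removable. You claim that among the inequalities $F^{(i)}_{\Phi_R(S\setminus\{s\})}(x)\le 0$ at a fixed starting index $i$, one of them already has ``sign pattern exactly $\Phi_R(S)$.'' That is precisely the assertion $\Phi_R(S\setminus\{s^*\})=\Phi_R(S)$ for some $s^*$, i.e.\ $s^*$ is removable. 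When no element is removable, \emph{none} of the sign sets $\Phi_R(S\setminus\{s\})$ coincides with $\Phi_R(S)$, so there is no single ``surviving'' inequality with the right sign pattern to which the others can be reduced. Moreover, the paper's proof does not work ``per starting index'' at all. The key identity it proves is
\[
\lambda\, F^{(s_1)}_{\Phi_R(S)}(x)\;=\;\sum_{k=1}^r\lambda_k\,F^{(s_k)}_{\Phi_R(S\setminus\{s_{k+1}\})}(x),
\]
with explicit positive integers $\lambda,\lambda_1,\dots,\lambda_r$; it mixes inequalities with \emph{all} $r$ starting indices $s_1,\dots,s_r$ (one from each lower neighbour, chosen with the appropriate shifted index), and the verification is a nontrivial geometric-sum computation. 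Restricting attention to the inequalities at a single starting index $i$ plus $\Ccal_{\pha,S}$ does not supply enough linear forms to derive $F^{(i)}_{\Phi_R(S)}$, so your proposed collapse cannot be local to one starting index.

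There is a second gap: even granting some version of your collapse lemma, you would only obtain the inclusion $\Ccal^{\cap,+}_S\subset\Ccal_{R,S}$. The equality requires the reverse inclusion as well, which the paper establishes separately by proving that the modulo-kernel generators $\gen^{(t)}_{R,S}=\delta^{(t)}_{T}e_t-p\,\delta^{(t-1)}_R e_{t-1}$ of $\Ccal_{R,S}$ lie in $\Ccal_{\pha,S}$ when $t\notin\Phi_R(S)$ and in $\bigcap_{j}\Ccal_{S\setminus\{j\}}$ when $t\in\Phi_R(S)$ (Proposition~\ref{gen-prop}). Your sketch never addresses why $\Ccal_{R,S}$ sits \emph{inside} the saturated sum. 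So the proposal is missing the central positive-combination identity (with its two subcases, $s_1$ removable or not, and the reduction via Lemma~\ref{lemma-remove} when some other $s_k$ is removable) as well as the entire reverse-inclusion argument.
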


Note that in general a convex hull of $p$-cones is not a $p$-cone, thus it is not a priori obvious that $\Ccal^{\cap,+}_S$ is defined by $p$-expressions, let alone that it is $S$-adapted and homogeneous. However, once we know that it is an $S$-adapted $p$-cone, it is clear that it must be positive and admissible because it contains $\Ccal_{\pha,S}$ by definition. Theorem \ref{main-thm-hom} shows that the intersection-sum cone is a natural and meaningful construction. It is possible to define these cones for any Shimura varieties (and even independently of Shimura varieties), thus one can hope for a generalization of Theorem \ref{main-thm-hom} to arbitrary reductive groups. Howeveer it is unclear to us what form this generalization would take.

As a consequence of the above theorem, we obtain the following vanishing result. Let $X=X_R$ be a Hodge-type Shimura variety of $A_1$-type and parahoric type $R$.

\begin{theorem}\label{vanish-thm}
Let $S\subset E_n$ be a subset. For any $\mathbf{k}\in \ZZ^n$ such that $\mathbf{k}\notin \Ccal_{R,S}$, we have
\begin{equation}
    H^0(\overline{\Flag}(X_R)_S, \Lcal(\mathbf{k})) = 0.
\end{equation}
\end{theorem}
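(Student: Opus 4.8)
The plan is to deduce Theorem \ref{vanish-thm} formally from the two principal inputs already at our disposal, so that the only real content needed here is imported from elsewhere. The first input is the general vanishing estimate for intersection-sum cones, Theorem \ref{thm-sep-syst}, which (under Assumption \ref{assume-hasse}) gives for every $w\in W$ the inclusion $\Ccal_{K,w}\subset\Ccal^{\cap,+}_w$ of cones in $X^*(T)$. The second is Theorem \ref{main-thm-hom}, the technical heart of the paper, which identifies $\Ccal^{\cap,+}_S$ with the unique positive admissible homogeneous $S$-adapted $p$-cone $\Ccal_{R,S}$ furnished by Proposition \ref{prop-unique}. Granting these two statements, the proof of Theorem \ref{vanish-thm} is essentially a diagram chase.

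Concretely, I would proceed as follows. First, recall from section \ref{thegroup} the dictionary: writing $w=\delta_S\in W=\{\pm1\}^n$ for the Weyl element attached to $S$ via \eqref{delta-bij}, the closed flag stratum $\overline{\Flag}(X_R)_S$ is $\overline{\Flag}(S_K)_w$, the line bundle $\Lcal(\mathbf{k})$ is $\Vcal_{\flag}(\chi_{\mathbf{k}})$, and the cone $\Ccal_{X,S}$ is the trace on $\Lambda=\ZZ^n$ of $\Ccal_{K,w}$; in particular $\mathbf{k}\in\ZZ^n$ lies in $C_{X,S}$ precisely when $H^0(\overline{\Flag}(X_R)_S,\Lcal(\mathbf{k}))\neq0$. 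Second, I would note that Assumption \ref{assume-hasse} is in force here: for $G=\Res_{\FF_{p^n}/\FF_p}(\GL_{2,\FF_{p^n}})$ and any $w\in W$, the coroots $\alpha^\vee$ with $\alpha\in E_w$ all belong to distinct $\GL_2$-factors and are hence linearly independent in $X_*(T)\otimes_{\ZZ}\QQ$, which is exactly the $A_1$-case singled out after Assumption \ref{assume-hasse}. Thus Theorem \ref{thm-sep-syst} yields $\Ccal_{X,S}\subset\Ccal^{\cap,+}_S$, while Theorem \ref{main-thm-hom} gives $\Ccal^{\cap,+}_S=\Ccal_{R,S}$. Chaining, $\Ccal_{X,S}\subset\Ccal_{R,S}$, so $\mathbf{k}\notin\Ccal_{R,S}$ forces $\mathbf{k}\notin\Ccal_{X,S}$, hence $\mathbf{k}\notin C_{X,S}$ since $C_{X,S}$ is contained in its saturation $\Ccal_{X,S}$, which is precisely the asserted vanishing. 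Finally, for a Shimura variety of $A_1$-type whose attached $\FF_p$-group is a genuine product $\prod_i\Res_{\kappa_i/\FF_p}\GL_{2,\kappa_i}$ rather than a single Weyl restriction, I would reduce to the above by the product decomposition \eqref{Gzip-prod}: the stack of $G$-zips, the flag strata, and all the cones $C_{\pha,w}$ and $\Ccal^{\cap,+}_w$ factor through the $\sigma$-orbits (section \ref{sec-zipcone}), so $\Ccal^{\cap,+}_S=\prod_i\Ccal_{R_i,S_i}$, which is the definition of $\Ccal_{R,S}$ in that case, and the argument goes through verbatim factor by factor.

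The point to emphasize is that this proof carries no real difficulty of its own: every nontrivial ingredient is imported. All the work --- showing that the a priori merely convex cone $\Ccal^{\cap,+}_S$ is actually cut out by finitely many $p$-expressions, that it is $S$-adapted, and that it is homogeneous --- is concentrated in Theorem \ref{main-thm-hom}, which is established by an explicit analysis via the chain diagrams of section \ref{subsec-chain}. In other words, the main obstacle has already been dealt with by the time one reaches Theorem \ref{vanish-thm}; what remains is bookkeeping with the identifications of section \ref{thegroup} and the (routine) verification of Assumption \ref{assume-hasse} in the $A_1$-case.
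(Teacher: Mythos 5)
Your proof is correct and follows exactly the route the paper takes: identify $\Ccal^{\cap,+}_S$ with $\Ccal_{R,S}$ via Theorem \ref{main-thm-hom} and then apply Theorem \ref{thm-sep-syst}. The paper's actual proof is the same two-line deduction; your added remarks on verifying Assumption \ref{assume-hasse} in the $A_1$-case and on the $\FF_p$-product decomposition are correct expansions of what the paper leaves implicit.
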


\begin{proof}
Since $\Ccal_S^{+,\cap} = \Ccal_{R,S}$ by Theorem \ref{main-thm-hom}, this follows immediately from Theorem \ref{thm-sep-syst}.
\end{proof}

The proof of Theorem \ref{main-thm-hom} will occupy a large portion of the remainder of this section. We make a few preparatory remarks in the next paragraphs below.

\subsection{Ordering}\label{order-sec}

We make some natural definitions about the ordering of integers modulo $n$. Let $S\subset E_n$ be a subset. For elements $x,y\in S$, say that $y$ follows $x$ in $S$ if $y$ is the first element of $S$ that appears in the sequence
\begin{equation}\label{seq-x}
    x+1, x+2, x+3, \dots
\end{equation}
(recall that an integer $k$ is identified with the unique element of $E_n$ congruent to $k$ modulo $n$). For three vertices $x,y,z\in E_n$, we say that $z$ is between $x$ and $y$ if $z$ appears before $y$ (or at the same time as $y$) in the sequence \eqref{seq-x}. Write $[x,y]$ for the set of elements that are between $x$ and $y$. Similarly, define $]x,y] \colonequals [x,y] \setminus\{x\}$, $[x,y[ \  \colonequals [x,y] \setminus \{y\}$ and $]x,y[ \ \colonequals [x,y] \setminus\{x,y\}$.

In certain cases we will write $S=\{s_1,\dots, s_r\}$ and define cyclically $s_{i+mr}\colonequals s_i$ for any $m\in \ZZ$ (for example, $s_{r+1}=s_1$). By re-numbering the elements, we may assume that for each $1\leq d\leq r$, the element $s_{d+1}$ follows $s_d$ in the set $S$. Note that we are free to choose which element of $S$ is $s_1$, and this choice then determines uniquely the numbering of all elements.

\subsection{Galois translation}

Recall that the Frobenius $\sigma$ acts on $\ZZ^n$ as follows:
\begin{equation}
    \sigma \cdot (k_1,\dots, k_n) = (k_n, k_1, \dots ,k_{n-1}).
\end{equation}
In particular $\sigma e_i=e_{i+1}$ where $(e_1,\dots, e_n)$ is the standard basis of $\ZZ^n$. Similarly, for $i\in E_n$ define $\sigma(i)\colonequals i+1$ (recall that $n+1$ is identified with $1$). When we consider a subset $S=\{s_1,\dots, s_r\}$ (ordered as in section \ref{order-sec} above), this translation operator $\sigma$ will be useful to reduce to the case $s_1=1$. This will make notation more convenient in certain computations. Note that the operator $\sigma$ acts naturally on all objects that we have defined so far. For example, one sees easily that for any subset $T\subset \{1,\dots,n\}$ and $i\in T$, one has
\begin{equation}
    F^{(i)}_T(x) =  F^{(\sigma i)}_{\sigma T}(\sigma x)
\end{equation}
for all $x=(x_1,\dots, x_n)\in \ZZ^n$. Similarly, we have
\begin{equation}
    \Ccal_{\sigma R, \sigma S} = \sigma(\Ccal_{R,S})
\end{equation}
where $\Ccal_{R,S}$ is the unique positive admissible homogeneous $S$-adapted $p$-cone. This follows from the relation $\Phi_{\sigma R}(\sigma S) = \sigma(\Phi_R(S))$ that is clear from the construction of $\Phi_R(S)$. Similarly, one has $\Ccal_{\pha,\sigma S}=\sigma(\Ccal_{\pha,S})$ and $\Ccal_{\sigma S}^{\cap , +} = \sigma (\Ccal_{ S}^{\cap, +})$ (caution: here the cones $\Ccal_{\pha,\sigma S}$ and $\Ccal_{\sigma S}^{\cap , +}$ must be taken with respect to the parabolic type $\sigma R$).

Note that these natural manipulations involving the action of $\sigma$ would be impossible to carry out on the Shimura variety $X_R$. Indeed, since $P_R$ is not defined over $\FF_p$, the various cones attached to $R$ and $\sigma R$ correspond to different Shimura varieties altogether and hence are a priori unrelated. This freedom is one of the advantages of our group-theoretical approach.

\subsection{Removable elements} \label{sec-remove}

Fix a parabolic type $R$. We omit $R$ from the notation.

\begin{definition} \ 
\begin{definitionlist}
    \item For a subset $S\subset E_n$ and an element $j\in S$, we say that $j$ is removable from $S$ if 
\[\Phi(S)=\Phi(S\setminus\{j\}).\]
\item We say that $S$ is irreducible if none of the elements of $S$ is removable from $S$.
\end{definitionlist}
\end{definition}

One has the obvious but useful lemma below:
\begin{lemma}\label{lemma-remove}
Assume that $j\in S$ is a removable element of $S$ and write $T\colonequals \Phi(S)$. Then the cone $\Ccal_{S\setminus\{j\}}$ satisfies the inequality 
\[
F^{(i)}_{T}(x)\leq 0
\]
for any $i\in S\setminus\{j\}$.    
\end{lemma}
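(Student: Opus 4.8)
The plan is to unwind the definition of the homogeneous cone $\Ccal_{S\setminus\{j\}}$ and feed in the removability hypothesis; no computation is required.

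First I would recall, from the paragraph following Proposition \ref{prop-unique}, that for any subset $S'\subseteq E_n$ the cone $\Ccal_{S'}$ is \emph{by definition} the $S'$-adapted homogeneous $p$-cone attached to the constant function $\rho\colon S'\to \Pcal(E_n)$ with value $\Phi(S')$; explicitly,
\[
\Ccal_{S'} = \{\, x\in \ZZ^n \ \mid \ F^{(i)}_{\Phi(S')}(x)\leq 0 \ \text{ for all } i\in S' \,\}.
\]
Applying this with $S'=S\setminus\{j\}$ gives
\[
\Ccal_{S\setminus\{j\}} = \{\, x\in \ZZ^n \ \mid \ F^{(i)}_{\Phi(S\setminus\{j\})}(x)\leq 0 \ \text{ for all } i\in S\setminus\{j\} \,\}.
\]

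Next I would invoke the removability of $j$ from $S$, which by definition means precisely $\Phi(S\setminus\{j\})=\Phi(S)=T$. Substituting this equality into the previous display shows that $\Ccal_{S\setminus\{j\}}$ is cut out exactly by the inequalities $F^{(i)}_{T}(x)\leq 0$ for $i\in S\setminus\{j\}$. In particular every $x\in \Ccal_{S\setminus\{j\}}$ satisfies $F^{(i)}_{T}(x)\leq 0$ for each $i\in S\setminus\{j\}$, which is the assertion.

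There is no genuine obstacle here: the only point to verify is that the defining data of $\Ccal_{S\setminus\{j\}}$ is indeed the homogeneous function with constant value $\Phi(S\setminus\{j\})$ — equivalently, that the $p$-expressions cutting it out have starting indices running over $S\setminus\{j\}$ with signs prescribed by that common subset — and this is immediate from the construction of $\Ccal_{S'}$ recalled above. The lemma is best seen as a bookkeeping reformulation of the removability condition, packaged in the form in which it will be used later in the section.
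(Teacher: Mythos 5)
Your argument is correct and is essentially the same as the paper's one-line proof: both unwind the definition of $\Ccal_{S\setminus\{j\}}$ as the homogeneous $(S\setminus\{j\})$-adapted $p$-cone cut out by $F^{(i)}_{\Phi(S\setminus\{j\})}(x)\leq 0$ for $i\in S\setminus\{j\}$, then substitute $\Phi(S\setminus\{j\})=\Phi(S)=T$ from removability. You merely spell out the bookkeeping that the paper leaves implicit.
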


\begin{proof}
Indeed, since $\Phi(S)=\Phi(S\setminus\{j\})$ and $i\neq j$, one of the inequalities defining the cone $\Ccal_{S\setminus\{ j\}}$ is precisely $F^{(i)}_{T}(x)\leq 0$.
\end{proof}

\subsection{Invertible elements}

For a cone $\Ccal\subset \ZZ^n$, denote by $\KK(\Ccal)\subset \Ccal$ the set of invertible elements, i.e. the elements $\lambda\in \Ccal$ such that $-\lambda\in \Ccal$. The set $\KK(\Ccal)$ is the largest subgroup contained in $\Ccal$. For example, $\KK(\Ccal_{\pha,S})=\KK_S$.

\begin{definition}
Let $\Ccal\subset \ZZ^n$ be a cone, and $\Ccal'\subset \Ccal$ a subcone. We say that $\Ccal'$ is saturated in $\Ccal$ if for all $n\geq 1$ and all $x\in \Ccal$, one has $nx\in \Ccal' \Longrightarrow x\in \Ccal'$.
\end{definition}

In particular when $\Ccal=\ZZ^n$, we have the notion of a saturated subcone of $\ZZ^n$. If $\Ccal$ is saturated in $\ZZ^n$ and $\Ccal'$ is saturated in $\Ccal$, then $\Ccal'$ is also saturated in $\ZZ^n$. A subcone $\Ccal\subset \ZZ^n$ defined by linear inequalities is always saturated in $\ZZ^n$ (thus all $p$-cones are saturated in $\ZZ^n$). It is clear that $\KK(\Ccal)$ is always saturated in $\Ccal$. It satisfies even a stronger property: if $x,y\in \Ccal$ and $x+y\in \KK(\Ccal)$, then both $x,y$ are in $\KK(\Ccal)$.

\begin{lemma}\label{KC-lemma}
Let $\Ccal\subset \ZZ^n$ be a cone defined by $r$ linearly independent inequalities
\begin{equation}
    f_i(x)\leq 0 , \quad x\in \ZZ^n, \quad i=1,\dots, r.
\end{equation}
By this, we mean that the linear forms $f_i\colon \QQ^n\to \QQ$ are linearly independent over $\QQ$. Then $\KK(\Ccal)$ is a free module of rank $n-r$, and coincides with the set of $x\in \ZZ^n$ such that $f_i(x)=0$ for all $i=1,\dots,r$.
\end{lemma}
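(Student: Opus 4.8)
The plan is to analyze the cone $\Ccal$ by passing to the real (or rational) vector space it spans and using elementary linear algebra, then descending back to $\ZZ^n$ using the saturation property. First I would observe that $\KK(\Ccal)$ consists precisely of those $x \in \ZZ^n$ with $f_i(x) \leq 0$ and $f_i(-x) = -f_i(x) \leq 0$ for all $i$, i.e. $f_i(x) = 0$ for all $i = 1,\dots,r$. This already gives the description "$\KK(\Ccal) = \{x \in \ZZ^n \mid f_i(x) = 0 \ \forall i\}$" directly from the definitions, with no work beyond unwinding what "invertible element" means; the containment in both directions is immediate. So the only substantive claim is that this common kernel has rank $n - r$ as a $\ZZ$-module.

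For the rank computation I would consider the linear map $f = (f_1,\dots,f_r) \colon \QQ^n \to \QQ^r$. By hypothesis the $f_i$ are linearly independent over $\QQ$, which is exactly the statement that $f$ is surjective, so its kernel $\ker(f) \subset \QQ^n$ has dimension $n - r$ by rank–nullity. Now $\KK(\Ccal) = \ZZ^n \cap \ker(f)$, and since $\ker(f)$ is a $\QQ$-subspace of $\QQ^n$ cut out by equations with rational (indeed integer, after clearing denominators) coefficients, the intersection $\ZZ^n \cap \ker(f)$ is a saturated subgroup of $\ZZ^n$ — a point already noted in the paragraph preceding the lemma, since it is defined by the linear equalities $f_i(x) = 0$. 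A saturated subgroup of $\ZZ^n$ is free and its rank equals the dimension of the $\QQ$-subspace it spans; that span is all of $\ker(f)$ because any rational point of $\ker(f)$ becomes integral after scaling, hence lies in $\KK(\Ccal) \otimes \QQ$. Therefore $\KK(\Ccal)$ is free of rank $\dim_\QQ \ker(f) = n - r$, as claimed.

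I do not expect any real obstacle here; the lemma is essentially a bookkeeping statement combining the definition of $\KK(\Ccal)$ with rank–nullity. The only point requiring a modicum of care is the passage from "$\ZZ^n \cap \ker(f)$ spans a subspace of $\QQ^n$" to "that subspace is exactly $\ker(f)$", which is handled by the clearing-denominators remark above, together with the fact (stated earlier in the excerpt) that a subcone of $\ZZ^n$ defined by linear equalities is saturated in $\ZZ^n$ and hence a free direct summand of the appropriate rank.
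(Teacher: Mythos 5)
Your proposal is correct and follows essentially the same route as the paper: identify $\KK(\Ccal)$ with the common zero set of the $f_i$ directly from the definition of invertible elements, then compute the rank via rank–nullity applied to $(f_1,\dots,f_r)\colon \QQ^n\to\QQ^r$. The paper's proof is terser (it simply asserts the codimension-$r$ vanishing locus gives rank $n-r$), while you spell out the intermediate step that $\ZZ^n\cap\ker(f)$ is saturated and spans all of $\ker(f)$ after clearing denominators — a harmless elaboration of the same argument.
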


\begin{proof}
It is clear that $\KK(\Ccal)$ coincides with the set of $x\in \ZZ^n$ such that $f_i(x)=0$ for all $i=1,\dots, r$. Since the $f_i$ are linearly independent, they vanish on a codimension $r$ subspace of $\QQ^n$. Hence, the rank of $\KK(\Ccal)$ is $n-r$.
\end{proof}

\begin{lemma}\label{Sadap-lin-indep}
Let $S\subset E_n$ be a subset and $\Ccal$ an $S$-adapted $p$-cone. Then, the $|S|$ inequalities defining $\Ccal_S$ are linearly independent.
\end{lemma}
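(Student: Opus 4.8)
The plan is a $p$-adic valuation argument that exploits the rigid shape of the defining $p$-expressions. Recall that an $S$-adapted $p$-cone $\Ccal$ is cut out by the $|S|$ linear forms
\[
f_d(x) \colonequals F^{(d)}_{\varepsilon^{(d)}}(x) = \sum_{i=0}^{n-1} p^i\, \varepsilon^{(d)}_{d+i}\, x_{d+i}, \qquad d\in S,
\]
one for each starting index $d\in S$, where $\varepsilon^{(d)}\in\{\pm1\}^n$. The first step is the bookkeeping observation: for a fixed $d'\in S$, as $i$ runs over $0,\dots,n-1$ the indices $d'+i$ run (mod $n$) exactly once over all of $E_n$, so every variable $x_e$ occurs in $f_{d'}$ exactly once, with coefficient $\varepsilon^{(d')}_e\, p^{\nu}$ where $\nu\in\{0,\dots,n-1\}$ is the residue of $e-d'$ modulo $n$. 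In particular the coefficient of $x_{d'}$ in $f_{d'}$ is $\varepsilon^{(d')}_{d'}=\pm1$, whereas for $d\in S$ with $d\neq d'$ the coefficient of $x_d$ in $f_{d'}$ is $\varepsilon^{(d')}_d\, p^{\nu}$ with $1\le\nu\le n-1$, hence divisible by $p$.

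Next I would argue by contradiction. Suppose $\sum_{d\in S} c_d f_d = 0$ with $c_d\in\QQ$ not all zero. After clearing denominators we may assume $c_d\in\ZZ$, and after dividing through by the largest power of $p$ dividing $\gcd\{c_d : d\in S\}$ we may further assume that not all $c_d$ are divisible by $p$. Now fix $d\in S$ and read off the coefficient of $x_d$ in the relation: it equals
\[
c_d\,\varepsilon^{(d)}_d \;+\; \sum_{d'\in S,\ d'\neq d} c_{d'}\,\varepsilon^{(d')}_d\, p^{\nu(d',d)} \;=\; 0,
\]
with every $\nu(d',d)\ge 1$. Since $\varepsilon^{(d)}_d=\pm1$, this forces $p \mid c_d$. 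As $d\in S$ was arbitrary, $p$ divides all the $c_d$, contradicting the normalization; hence the $f_d$ are linearly independent over $\QQ$.

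I do not expect a real obstacle here: the only point requiring care is the bookkeeping step, namely that for each $d'\in S$ the variable $x_{d'}$ is the unique one occurring with a $p^0$ coefficient in $f_{d'}$, which is immediate from the definition of $F^{(d')}_{\varepsilon}$ together with the fact that $i\mapsto d'+i \pmod n$ is a bijection of $E_n$. In fact this shows slightly more: reduction modulo $p$ of the transition matrix expressing $(f_d)_{d\in S}$ in terms of $(x_d)_{d\in S}$ is a signed permutation matrix, hence invertible over $\FF_p$ and a fortiori the original matrix has nonzero determinant over $\QQ$; one could alternatively phrase the whole argument this way.
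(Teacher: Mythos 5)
Your argument is correct and is essentially the same one the paper uses: both observe that each $f_d$ reduces modulo $p$ to $\pm x_d$ (since the starting indices $d\in S$ are pairwise distinct), and that $\FF_p$-linear independence lifts to $\QQ$-linear independence. Your closing remark about the transition matrix reducing mod $p$ to a signed permutation matrix is precisely the paper's one-line proof, and your $p$-adic valuation contradiction is just the standard justification of that lifting step made explicit.
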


\begin{proof}
    $\Ccal$ is defined by inequalities of the form $F^{(i)}_{T^{(i)}}(x)\leq 0$ for $i\in S$ and certain subsets $T^{(i)}\subset E_n$. It is clear that the forms $F^{(i)}_{T^{(i)}}\colon \ZZ^n\to \ZZ$ are linearly independent when we reduce them modulo $p$ (because the starting indices are pairwise distinct). Hence, they are also linearly independent over $\QQ$.
\end{proof}

\begin{corollary} \label{cor-S-adap-KC}
Let $S\subset E_n$ be a subset and $\Ccal$ an $S$-adapted $p$-cone. Then $\KK(\Ccal)$ is a free module of rank $n-|S|$ which is saturated in $\ZZ^n$.
\end{corollary}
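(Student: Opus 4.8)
The plan is to deduce Corollary \ref{cor-S-adap-KC} directly from the two lemmas that immediately precede it, so the proof should be short. First I would invoke Lemma \ref{Sadap-lin-indep}, which tells us that the $|S|$ linear forms $F^{(i)}_{T^{(i)}}$ (for $i\in S$) cutting out $\Ccal$ are linearly independent over $\QQ$. This is exactly the hypothesis needed to apply Lemma \ref{KC-lemma} with $r=|S|$: the cone $\Ccal$ is defined by $|S|$ linearly independent inequalities $f_i(x)\le 0$, so $\KK(\Ccal)$ is a free $\ZZ$-module of rank $n-|S|$ and equals $\{x\in\ZZ^n \mid F^{(i)}_{T^{(i)}}(x)=0 \text{ for all } i\in S\}$.

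It remains to record that $\KK(\Ccal)$ is saturated in $\ZZ^n$. Here I would appeal to the discussion just before Lemma \ref{KC-lemma}: a subcone of $\ZZ^n$ defined by linear equalities (equivalently, the intersection of $\ZZ^n$ with a $\QQ$-subspace) is saturated in $\ZZ^n$ — indeed if $mx$ lies in the common zero locus of the $f_i$ then so does $x$, since the $f_i$ are $\QQ$-linear. Alternatively one can note that $\Ccal$ itself is a $p$-cone, hence saturated in $\ZZ^n$, and that $\KK(\Ccal)$ is always saturated in $\Ccal$; combining these (as remarked in the text, saturation is transitive) gives that $\KK(\Ccal)$ is saturated in $\ZZ^n$. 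Either route closes the argument.

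There is essentially no obstacle here: the statement is a formal consequence of Lemmas \ref{KC-lemma} and \ref{Sadap-lin-indep} together with the elementary transitivity of saturation. The only point requiring the tiniest bit of care is making sure the linear independence is over $\QQ$ (not merely over $\FF_p$); but Lemma \ref{Sadap-lin-indep} already arranges this by reducing mod $p$ and observing the forms stay independent because the starting indices of the $p$-expressions are pairwise distinct.

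\begin{proof}
By Lemma \ref{Sadap-lin-indep}, the $|S|$ linear forms $F^{(i)}_{T^{(i)}}$ ($i\in S$) defining $\Ccal$ are linearly independent over $\QQ$. Applying Lemma \ref{KC-lemma} with $r=|S|$, we conclude that $\KK(\Ccal)$ is a free $\ZZ$-module of rank $n-|S|$ and coincides with the set of $x\in\ZZ^n$ satisfying $F^{(i)}_{T^{(i)}}(x)=0$ for all $i\in S$. Since $\Ccal$ is a $p$-cone, it is defined by linear inequalities and hence saturated in $\ZZ^n$; moreover $\KK(\Ccal)$ is always saturated in $\Ccal$, and saturation is transitive, so $\KK(\Ccal)$ is saturated in $\ZZ^n$.
\end{proof}
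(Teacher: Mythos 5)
Your proof is correct and is exactly the argument the paper intends: the corollary is stated without proof as an immediate consequence of Lemmas \ref{KC-lemma} and \ref{Sadap-lin-indep}, together with the remarks preceding Lemma \ref{KC-lemma} that $p$-cones are saturated in $\ZZ^n$, that $\KK(\Ccal)$ is saturated in $\Ccal$, and that saturation is transitive. Nothing is missing.
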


We deduce that if $\Ccal'\subset \Ccal\subset \ZZ^n$ are two saturated subcones of $\ZZ^n$, then one has $\KK(\Ccal')\subset \KK(\Ccal)$ and the quotient $\KK(\Ccal)/\KK(\Ccal')$ is free of finite rank.

\begin{proposition}\label{lem-KS-homog}
Let $\Ccal\subset \ZZ^n$ be an $S$-adapted admissible $p$-cone. Then one has
\[\KK(\Ccal) = \KK_S.\]    
\end{proposition}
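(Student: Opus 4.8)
The plan is to show the two inclusions $\KK_S \subset \KK(\Ccal)$ and $\KK(\Ccal) \subset \KK_S$ separately, using the characterization of $\KK(\Ccal)$ from Lemma \ref{KC-lemma} and Lemma \ref{admiss-lem}. First, since $\Ccal$ is an $S$-adapted admissible $p$-cone, Lemma \ref{admiss-lem} gives the containment $\KK_S \subset \Ccal$; as $\KK_S$ is a subgroup, for any $x \in \KK_S$ we also have $-x \in \KK_S \subset \Ccal$, hence $x \in \KK(\Ccal)$. This establishes $\KK_S \subset \KK(\Ccal)$ without any computation.

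For the reverse inclusion, I would use the explicit description of $\KK(\Ccal)$ coming from Corollary \ref{cor-S-adap-KC} and Lemma \ref{KC-lemma}: since $\Ccal$ is $S$-adapted, it is defined by the $|S|$ linearly independent inequalities $F^{(i)}_{\gamma_\Ccal(i)}(x) \leq 0$ for $i \in S$, and $\KK(\Ccal)$ is precisely the set of $x \in \ZZ^n$ with $F^{(i)}_{\gamma_\Ccal(i)}(x) = 0$ for all $i \in S$. On the other hand, $\KK_S = \KK(\Ccal_{\pha,S})$ is, by Proposition \ref{prop-pha-cone} and Lemma \ref{KC-lemma} applied to the $|S|$ linearly independent inequalities of $\Ccal_{\pha,S}$, the common zero locus of the $p$-expressions defining $\Ccal_{\pha,S}$. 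By Corollary \ref{cor-S-adap-KC}, both $\KK(\Ccal)$ and $\KK_S$ are free of the same rank $n - |S|$ and saturated in $\ZZ^n$. So it suffices to prove $\KK(\Ccal) \subset \KK_S$, i.e. that every $x$ on which all the $p$-expressions of $\Ccal$ vanish also kills all the $p$-expressions of $\Ccal_{\pha,S}$; combined with the first paragraph and equality of ranks, this forces $\KK(\Ccal) = \KK_S$. (Alternatively, once $\KK_S \subset \KK(\Ccal)$ is known and both are free saturated of the same rank $n-|S|$, the quotient $\KK(\Ccal)/\KK_S$ is free of rank $0$, hence trivial, which already finishes the proof — this is the cleaner route and I would take it.)

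So the argument collapses to: $\KK_S \subset \KK(\Ccal)$ by admissibility (Lemma \ref{admiss-lem}), both are free saturated $\ZZ$-modules of rank $n - |S|$ by Corollary \ref{cor-S-adap-KC} (applied to $\Ccal$ and to $\Ccal_{\pha,S}$, using that $\Ccal_{\pha,S}$ is $S$-adapted by Proposition \ref{prop-pha-cone}), and a free saturated submodule of a free module of the same rank is the whole module. The only mild subtlety — the place I would be most careful — is checking that $\KK(\Ccal)/\KK_S$ being torsion-free of rank zero really implies it vanishes: this uses that $\KK_S$ is saturated in $\ZZ^n$ (hence in $\KK(\Ccal)$), as noted in the remark following Corollary \ref{cor-S-adap-KC}, so the quotient injects into $\ZZ^n/\KK(\Ccal)$ and is finitely generated torsion-free of rank zero, therefore trivial. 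No genuine computation with the $p$-expressions is needed.
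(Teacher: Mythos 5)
Your proof is correct and follows essentially the same route as the paper: $\KK_S\subset\KK(\Ccal)$ by Lemma \ref{admiss-lem}, both kernels are free saturated modules of rank $n-|S|$ by Corollary \ref{cor-S-adap-KC}, and the free rank-zero quotient forces equality. (One small slip: the quotient $\KK(\Ccal)/\KK_S$ injects into $\ZZ^n/\KK_S$, not into $\ZZ^n/\KK(\Ccal)$, but this does not affect the argument.)
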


\begin{proof}
Since $\Ccal$ is admissible, it contains $\KK_S=\KK(\Ccal_{\pha,S})$ by Lemma \ref{admiss-lem}. Hence we have $\KK(\Ccal_{\pha,S})\subset \KK(\Ccal)$. Since both $\Ccal_{\pha,S}$ and $\Ccal$ are $S$-adapted, we know by Corollary \ref{cor-S-adap-KC} that $\KK(\Ccal_{\pha,S})$ and $\KK(\Ccal)$ are free modules of rank $n-|S|$. Furthermore, since the quotient $\KK(\Ccal) / \KK(\Ccal_{\pha,S})$ is free, we deduce that $\KK(\Ccal) = \KK(\Ccal_{\pha,S})$.
\end{proof}

\subsection{System of generators}

\begin{definition}
Let $\Ccal\subset \ZZ^n$ be a subcone of $\ZZ^n$ and $\lambda_1,\dots, \lambda_r\in \Ccal$.
\begin{definitionlist}
 \item We say that $\lambda_1,\dots,\lambda_r$ is a system of $\QQ_{\geq 0}$-generators of $\Ccal$ if any element of $\Ccal$ can be written as a linear combination of the $\lambda_i$ with non-negative rational coefficients.
  \item We say that $\lambda_1,\dots,\lambda_r$ is a system of $\QQ_{\geq 0}$-generators of $\Ccal$ modulo kernel if any element of $\Ccal$ can be written as $x+y$ where $x$ is a linear combination of the $\lambda_i$ with nonnegative rational coefficients and $y\in \KK(\Ccal)$.
\end{definitionlist}
\end{definition}

Fix a parabolic type $R\subset E_n$. Let $S\subset E_n$ be a subset and let $\Ccal_{\pha,S}$ be the cone of partial Hasse invariants for $S$. Recall that $\Ccal_{\pha,S}$ is the saturation of the cone
\begin{equation}
    C_{\pha,S} = \sum_{i\in S} \NN \ \ha_{R,S}^{(i)} \ + \ \sum_{i\notin S} \ZZ \ \ha_{R,S}^{(i)}
\end{equation}
where $\ha_{R,S}^{(i)} = -\delta^{(i)}_S e_i - p \delta_R^{(i-1)} e_{i-1}$. The kernel $\KK(\Ccal_{\pha,S})=\KK_S$ is generated as a group by the elements $\ha^{(i)}_{R,S}$ for $i\notin S$. Thus, the elements $\ha_{R,S}^{(i)} =  e_i-p\delta_R^{(i-1)} e_{i-1}$ for $i\in S$ form a system of $\QQ_{\geq 0}$-generators of $\Ccal_{\pha,S}$ modulo kernel.

Let $\Ccal_{R,S}$ be the unique positive admissible homogeneous $S$-adapted $p$-cone. We seek a system of $\QQ_{\geq 0}$-generators modulo kernel for $\Ccal_{R,S}$. Write $T\colonequals \Phi_R(S)$ and define for all $i\in S$ the vector:
\begin{equation}
\gen_{R,S}^{(i)} = \delta_T^{(i)} e_i- p\delta_R^{(i-1)} e_{i-1}.
\end{equation}
Note that when $i \in S\setminus T$ we have $\delta_T^{(i)}=1$ and hence $\gen_{R,S}^{(i)} = \ha_{R,S}^{(i)}$. Also, since the cone $\Ccal_{R,S}$ is positive, we have $\delta^{(i-1)}_T=\delta^{(i-1)}_R$, so we can also write $\gen_{R,S}^{(i)} = \delta_T^{(i)} e_i-p\delta_T^{(i-1)} e_{i-1}$.

\begin{proposition}
The vectors $\gen_{R,S}^{(i)}$ (for $i\in S$) form a system of $\QQ_{\geq 0}$-generators modulo kernel of $\Ccal_{R,S}$.
\end{proposition}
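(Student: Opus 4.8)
The goal is to show that the $n-|S|+|S|=n$ vectors consisting of the $\gen_{R,S}^{(i)}$ for $i\in S$ together with a basis of $\KK_S=\KK(\Ccal_{R,S})$ span $\Ccal_{R,S}$ over $\QQ_{\geq 0}$ modulo kernel. By Corollary \ref{cor-S-adap-KC} and Proposition \ref{lem-KS-homog}, $\KK(\Ccal_{R,S})=\KK_S$ has rank $n-|S|$, and the quotient $\ZZ^n/\KK_S$ has rank $|S|$; so it suffices to show two things: first, that each $\gen_{R,S}^{(i)}$ genuinely lies in $\Ccal_{R,S}$, and second, that the images of the $\gen_{R,S}^{(i)}$ in $\ZZ^n/\KK_S$ generate the image of $\Ccal_{R,S}$ over $\QQ_{\geq 0}$. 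The first point is the analogue of the computation in the proof of the proposition identifying $\Ccal_{\pha,S}$: one simply evaluates $F^{(d)}_T(\gen_{R,S}^{(i)})$ for each $d\in S$, where $T=\Phi_R(S)$, and checks it is $\leq 0$. Writing $\gen_{R,S}^{(i)}=\delta^{(i)}_T e_i - p\delta^{(i-1)}_T e_{i-1}$ (using positivity, $\delta^{(i-1)}_R=\delta^{(i-1)}_T$), the pairing $F^{(d)}_T(\gen_{R,S}^{(i)})$ picks out the coefficients of $e_i$ and $e_{i-1}$ in the $p$-expression with starting index $d$ and sign pattern $\delta_T$. For $d\neq i$ the two contributions cancel, exactly as the admissibility relation $\delta^{(d-1)}_R\delta^{(d-1)}_T+\delta^{(d)}_T\le 0$ forced them to in the Hasse case; for $d=i$ the leading term $-p^n\delta^{(i-1)}_T\delta^{(i-1)}_R=-p^n$ dominates, giving a strictly negative value. (Here one must be careful about the cyclic indexing: $e_{i-1}$ contributes at position $i-1-d \bmod n$, which is $n-1$ when $d=i$, hence the $p^{n}$ rescaling once we account for the leading $p^{i-1}$ vs $p^{n-1}$ normalization.)

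\textbf{The span statement.} For the second, harder point, I would argue as follows. An arbitrary $x\in\Ccal_{R,S}$ satisfies $F^{(d)}_T(x)\leq 0$ for all $d\in S$. I want to write $x = \sum_{i\in S} c_i\gen_{R,S}^{(i)} + \kappa$ with $c_i\in\QQ_{\geq 0}$ and $\kappa\in\KK_S$. Since the $|S|$ linear forms $F^{(d)}_T$ ($d\in S$) are linearly independent over $\QQ$ (Lemma \ref{Sadap-lin-indep}), and $\KK_S$ is precisely their common zero locus (Lemma \ref{KC-lemma}), the quotient map $\QQ^n\to\QQ^{|S|}$, $x\mapsto (F^{(d)}_T(x))_{d\in S}$, is an isomorphism onto $\QQ^{|S|}$ with kernel $\KK_S\otimes\QQ$. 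So it is enough to verify that the images $(F^{(d)}_T(\gen_{R,S}^{(i)}))_{d\in S}$, as $i$ runs over $S$, form a basis of $\QQ^{|S|}$ with the property that any vector with all coordinates $\leq 0$ is a $\QQ_{\geq 0}$-combination of them. From the first part, $F^{(d)}_T(\gen_{R,S}^{(i)})=0$ for $d\neq i$ and $F^{(i)}_T(\gen_{R,S}^{(i)})<0$ (a negative power-of-$p$ quantity). Thus in the coordinates $(F^{(d)}_T)_{d\in S}$ the generator $\gen_{R,S}^{(i)}$ is a negative multiple of the $i$-th standard basis vector. Consequently the cone they generate over $\QQ_{\geq 0}$ is exactly the negative orthant $\{v : v_d\leq 0\ \forall d\in S\}$, which is precisely the image of $\Ccal_{R,S}$. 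This gives surjectivity modulo kernel, completing the proof.

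\textbf{Main obstacle.} I expect the only real work — and the place where an error could hide — is the sign/index bookkeeping in the evaluation $F^{(d)}_T(\gen_{R,S}^{(i)})$, specifically making sure that the cancellation for $d\neq i$ is forced by $T=\Phi_R(S)$ being admissible (this is exactly where the definition of $\Phi_R$ enters), and that the $d=i$ term comes out with the correct sign, which is where positivity of $\Ccal_{R,S}$ (the condition $\delta^{(i-1)}_T=\delta^{(i-1)}_R$) is used. Everything else is formal linear algebra over $\QQ$ once these two facts are in hand. It may be cleanest to first reduce, via the Galois-translation identities $F^{(i)}_T(x)=F^{(\sigma i)}_{\sigma T}(\sigma x)$ of the preceding subsection, to the case where the starting index $d$ under consideration equals $1$, so that the $p$-expression is literally $\sum_{j=0}^{n-1}p^j\delta^{(1+j)}_T x_{1+j}$ and the coefficient extraction is transparent. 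I would also remark at the end that, combining this proposition with the containment $\Ccal_{\pha,S}\subset\Ccal_{R,S}$ (which holds because $\Ccal_{R,S}$ is positive and admissible), one recovers that $\gen_{R,S}^{(i)}=\ha_{R,S}^{(i)}$ for $i\in S\setminus T$, consistent with the remark made just before the statement.
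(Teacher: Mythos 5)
Your proof is correct and follows essentially the same route as the paper: evaluate the $|S|$ linear forms $F^{(d)}_T$ on each $\gen_{R,S}^{(i)}$, observe the resulting matrix is diagonal with negative entries, and conclude via the linear-algebra reduction that the generators span the negative orthant in these coordinates. One small inaccuracy in your commentary: the vanishing of $F^{(d)}_T(\gen_{R,S}^{(i)})$ for $d\neq i$ is not where admissibility of $T=\Phi_R(S)$ enters — once positivity is used to rewrite $\gen_{R,S}^{(i)}=\delta_T^{(i)}e_i-p\,\delta_T^{(i-1)}e_{i-1}$, the cancellation is the trivial identity $p^{j}(\delta_T^{(i)})^2-p\cdot p^{j-1}(\delta_T^{(i-1)})^2=0$, valid for any $T\subset E_n$; admissibility and positivity are what make $\Ccal_{R,S}$ well-defined and give the rewriting in the first place, not what drives the cancellation.
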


\begin{proof}
The cone $\Ccal_{R,S}$ is defined by the $|S|$ inequalities $F^{(i)}_T(x)\leq 0$ where $i\in S$ and $T=\Phi_R(S)$. Hence, it suffices to show that $F^{(i)}_T(\gen_{R,S}^{(j)})$ is zero for distinct elements $i,j\in S$ and is negative for $i=j$. Assume first that $i\neq j$. Looking at the expression of $F^{(i)}_T(x)$ and the fact that $i\neq j$, one sees easily that $F^{(i)}_T(x)$ vanishes when $x=\delta^{(j)}_T e_j - p \delta^{(j-1)}_T e_{j-1}$, hence $F^{(i)}_T(\gen_{R,S}^{(j)})=0$. Finally, in the case $i=j$ we find that $F^{(i)}_T(\gen_{R,S}^{(i)}) = -(p^n-1)$, which terminates the proof.
\end{proof}

We deduce that $\Ccal_{R,S}$ is the saturation in $\ZZ^n$ of the following cone:
\begin{equation}\label{CS-eq}
\sum_{i\in S} \NN \ \gen_{R,S}^{(i)} \ + \KK_{R,S}.
\end{equation}

\subsection{Proof of Theorem \ref{main-thm-hom}}
This section contains the technical computations necessary to prove Theorem \ref{main-thm-hom}. We fix an integer $n\geq 1$ and a parabolic type $R\subset E_n$ throughout and omit them from all notation. In particular, we write $\Phi(S)$ instead of $\Phi_R(S)$. We will show by induction that for all strata $S\subset E_n$, the intersection-sum cone $\Ccal^{\cap,+}_S$ coincides with $\Ccal_S$ (the unique positive, admissible $S$-adapted homogeneous $p$-cone).

For a stratum such that $|S|=1$, the intersection-sum $\Ccal_{S}^{\cap,+}$ is defined to be $\Ccal_{\pha,S}$. We already know that this is an admissible, positive $S$-adapted $p$-cone. Furthermore, since $|S|=1$ it is obviously homogeneous. Therefore, we may assume $|S|\geq 2$ and prove the result by induction. Recall that $\Ccal_S$ is defined by the $|S|$ inequalities
\begin{equation}
    F^{(i)}_{T}(x)\leq 0, \qquad x\in \ZZ^n
\end{equation}
where $T=\Phi(S)$ and $i$ varies in $S$. We need to show that $\Ccal^{\cap,+}_S=\Ccal_S$. Write 
\begin{equation}\label{S-subset}
    S=\{s_1,\dots, s_r\}
\end{equation}
where $r\colonequals |S|$ and the elements are ordered as in section \ref{order-sec}. Consider all lower neighbors of $S$, namely the subsets of the form $S_k\colonequals S\setminus \{s_k\}$ for $1\leq k\leq r$. By the induction hypothesis, we know that the intersection-sum cone of $S_k$ coincides with $\Ccal_{S_k}$, the unique positive admissible homogeneous $S_k$-adapted $p$-cone (where all these notions are taken with respect to the subset $S_k$). We thus have:
\begin{equation}
    \Ccal^{\cap,+}_S = \Ccal_{\pha, S} \ +_{\rm sat} \ \bigcap_{k\in S} \Ccal_{S_k}.
\end{equation}
We need to prove the two inclusions $\Ccal^{\cap,+}_S\subset \Ccal_S$ and  $\Ccal_S\subset \Ccal^{\cap,+}_S$.

\subsubsection{\texorpdfstring{Proof of the inclusion $\Ccal^{\cap,+}_S\subset \Ccal_S$}{}}
This is the more interesting of the two inclusions, as it gives an upper bound for the cone $\Ccal^{\cap,+}_S$, hence also one for the cone $\Ccal_{K,S}$ by Theorem \ref{thm-sep-syst}. Since $\Ccal_S$ is positive and admissible, we already know that $\Ccal_{\pha,S}\subset \Ccal_S$. Thus, it suffices to show that $\bigcap_{1\leq k \leq r} \Ccal_{S_k}\subset \Ccal_S$. We write 
\[\Ccal^{\cap}\colonequals \bigcap_{1\leq k \leq r} \Ccal_{S_k}.\]
We must prove that for each $i\in S$, the cone $\Ccal^{\cap}$ satisfies the inequality $F_T^{(i)}(x)\leq 0$ for $T=\Phi(S)$. Suppose that we have shown: For any $R$ and any $S$ such that $1\in S$, the cone $\Ccal^{\cap}$ satisfies $F_T^{(1)}(x)\leq 0$. Then, let $S,R$ be arbitrary and take $i\in S$. Using the translation operator $\sigma$ and applying the assumption to the sets $\sigma^{-(i-1)} S$, $\sigma^{-(i-1)} R$, we deduce that $\Ccal^{\cap}$ satisfies $F_T^{(i)}(x)\leq 0$. Therefore, we may assume that $s_1=1$ and show that $\Ccal^{\cap}$ satisfies $F_T^{(1)}(x)\leq 0$. In this case, we can write the elements of $S$ as $1=s_1<s_2<\dots < s_r \leq n$ for $r=|S|$.

The easiest situation is when some element $s_k$ ($k\neq 1$) is removable from $S$. As we explained in Lemma \ref{lemma-remove}, in this case the cone $\Ccal_{S_k}$ (where $S_k=S\setminus\{s_k\}$) satisfies the inequality $F_T^{(s_1)}(x)\leq 0$. In particular, the cone $\Ccal^{\cap}$ also satisfies this inequality. Therefore, we are reduced to the situation when none of the elements $s_2,\dots, s_r$ is removable from $S$. We will assume that this is the case from now on. The proof then continues by considering two cases: The case when $s_1$ is removable from $S$ and the case when it is not.

Write $\Phi(S)=T$ and $\Phi(S_k)=T_k$ for $k=1,\dots, r$. We will show that there exist positive integers $\lambda, \lambda_1, \dots, \lambda_r$ satisfying
\begin{equation}\label{Fs1-relation}
   \lambda  F^{(s_1)}_T(x) = \sum_{k=1}^r \lambda_k F^{(s_{k})}_{T_{k+1}}(x)
\end{equation}
for all $x=(x_1,\dots, x_n)\in \ZZ^n$. Since $\Ccal^{\cap}$ satisfies all inequalities $F^{(s_{k})}_{T_{k+1}}(x)\leq 0$ for $k=1,\dots,r$, this will show that it also satisfies $F^{(s_1)}_T(x)\leq 0$. 

\paragraph{The case when $s_1$ is removable} In this case, we define $\lambda, \lambda_1, \dots, \lambda_r$ as follows: set $\lambda\colonequals p (p^n+1)^{r-1}$ and for $1\leq k \leq r-1$ put
\begin{equation}
    \lambda_k \colonequals 2^{k-1} (p^n-1) p^{s_k} (p^n+1)^{r-k-1}.
\end{equation}
For $k=r$, put
\begin{equation}
    \lambda_r\colonequals 2^{r-1} p^{s_r}.
\end{equation}
It is now a tedious computation to check the equality \eqref{Fs1-relation}. Since this computation is the key to the main theorem \ref{main-thm-hom}, we carry it out in details below. We can write
\begin{equation}
   F^{(s_{k})}_{T_{k+1}}(x) = \sum_{i=0}^{n-1} p^i \delta_{T_{k+1}}^{(i+s_k)} x_{i+s_k}.
\end{equation}
Let $\Pi(x)$ denote the right-hand side of \eqref{Fs1-relation}. We find:
\begin{align*}
  \Pi(x) &= (p^n-1)\left( \sum_{k=1}^{r-1} 2^{k-1} p^{s_k} (p^n+1)^{r-k-1} F_{T_{k+1}}^{(s_k)}(x)  \right) + 2^{r-1}p^{s_r} F_{T_1}^{(s_r)}(x) \\
&= (p^n-1)\left( \sum_{k=1}^{r-1} 2^{k-1} p^{s_k} (p^n+1)^{r-k-1}  \sum_{i=0}^{n-1} p^i \delta_{T_{k+1}}^{(i+s_k)} x_{i+s_k} 
 \right) + 2^{r-1}p^{s_r} \sum_{i=0}^{n-1} p^i \delta_{T_{1}}^{(i+s_r)} x_{i+s_r}.  \\  
\end{align*}
We want to find the coefficient in front of the variable $x_m$ (for $1\leq m \leq n$) in the above expression. Note that $x_m$ can appear in two different ways in this expression because one can have $i+s_k =i' +s_{k'}+n$ for different values of $i,k,i',k'$. Taking this into account, we filter $m$ with respect to the intervals $[s_d, s_{d+1}[$. Recall that when $d=r$, we have $s_{r+1}=s_1=1$ and the interval $[s_r, s_1[$ consists of the integers from $s_r$ to $n$. For $m\in [s_{d},s_{d+1}[$ and $d\leq r-1$, the coefficient in front of $x_m$ in $\Pi(x)$ is
\begin{equation*}
c_m\colonequals (p^n-1)p^m \left( \sum_{k=1}^d 2^{k-1}(p^n+1)^{r-k-1} \delta_{T_{k+1}}^{m} + p^{n} \sum_{k=d+1}^{r-1} 2^{k-1} (p^n+1)^{r-k-1} \delta_{T_{k+1}}^{m} \right) + 2^{r-1} p^{m+n} \delta_{T_{1}}^{m}.
\end{equation*}
For $d=r$, the coefficient in front of $x_m$ in $\Pi(x)$ is
\begin{equation*}
c_m\colonequals (p^n-1)p^m \left( \sum_{k=1}^{r-1} 2^{k-1}(p^n+1)^{r-k-1} \delta_{T_{k+1}}^{m}\right) + 2^{r-1} p^{m} \delta_{T_{1}}^{m}.
\end{equation*}
By assumption, none of the elements $s_k$ (for $k\neq 1$) is removable from $S$. Note that when we remove $s_k$ from $S$, only the elements of $[s_{k-1},s_k[$ are affected. Thus, $T\cap [s_{i-1},s_i[$ and $T_i\cap [s_{i-1},s_i[$ coincide for all $i\neq k$ and are complementary for $i=k$. We deduce that $\delta_{T_{k+1}}^{(m)}=\delta_{T}^{(m)}$ for $k\neq d$ and $\delta_{T_{d+1}}^{(m)}=-\delta_{T}^{(m)}$. Using this and the formula for the sum of the geometric sequence $2^{k-1}(p^n+1)^{r-k-1} = \frac{(p^n+1)^{r-1}}{2} \left(\frac{2}{p^n+1}\right)^k$, we find for $1\leq d\leq r-1$:
\begin{align*}
c_m &= (p^n-1)p^m \delta_{T}^{(m)}  \Bigg( \frac{(p^n+1)^{r-1}-2^{d-1}(p^n+1)^{r-d}}{p^n-1}
-  2^{d-1}(p^n+1)^{r-d-1} \\
& \qquad \qquad  + \frac{2^d p^n}{p^n-1} ((p^n+1)^{r-d-1}-2^{r-d-1}) \Bigg) + 2^{r-1} p^{m+n} \delta_{T}^{m}  \\
& = p^m \delta_{T}^{(m)}  \bigg((p^n+1)^{r-1} - (p^n+1)^{r-d-1} ( 2^{d-1} (p^n+1) + 2^{d-1} (p^n-1) -2^d p^n) - 2^{r-1} p^n +2^{r-1}p^n\bigg) \\
& = p^m (p^n+1)^{r-1}  \delta_{T}^{(m)} = \lambda p^{m-1}\delta^{(m)}_T.
\end{align*}
Thus, $c_m$ coincides with the coefficient of $x_m$ in $\lambda F_T^{(s_1)}(x)$. In the above computation, we have not used the assumption that $s_1$ is removable. This assumption is only necessary for the case $d=r$. In this case, we have $\delta^{(m)}_{T_1} = \delta_T^{(m)}$. Therefore, a similar computation yields:
\begin{align*}
c_m &= (p^n-1)p^m \left( \sum_{k=1}^{r-1} 2^{k-1}(p^n+1)^{r-k-1} \delta_{T_{k+1}}^{m}\right) + 2^{r-1} p^{m} \delta_{T_{1}}^{m} \\
& = p^m \delta_{T}^{(m)}  \bigg( (p^n-1) \frac{(p^n+1)^{r-1} - 2^{r-1}}{(p^n-1)} +2^{r-1} \bigg) \\
& = p^m (p^n+1)^{r-1}  \delta_{T}^{(m)} = \lambda p^{m-1}\delta^{(m)}_T.
\end{align*}
From this, we deduce \eqref{Fs1-relation} in the case when $s_1$ is removable.

\paragraph{The case when $s_1$ is not removable}
In this case, $S$ is irreducible (i.e. none of its elements is removable), hence the situation is more symmetric. Therefore, we will not need to distinguish between the cases $1\leq d\leq r-1$ and $d=r$ as above. However, we need to change the definition of $\lambda, \lambda_1, \dots, \lambda_r$ as follows. For all $1\leq k \leq r$ (including $k=r)$, we set
\begin{equation}
    \lambda_k \colonequals 2^{k-1} p^{s_k} (p^n+1)^{r-k}.
\end{equation}
To define $\lambda$, we consider the polynomial
\begin{equation}
    Q(X)=\sum_{j=1}^{r-1} 2^{r-j-1} X^j - 2^{r-1} = \frac{X^r-2^r}{X-2}-2^r.
\end{equation}
It is clear that if $t > 2$ then $Q(t) > 0$. We put
\begin{equation}
    \lambda \colonequals Q(p^n+1) = \frac{(p^n+1)^r-2^r}{p^n-1}-2^r =\frac{(p^n+1)^r-2^r p^n}{p^n-1} .
\end{equation}
Again, we check that the identity \eqref{Fs1-relation} holds. Letting again $\Pi(x)$ denote the right-hand side of \eqref{Fs1-relation}, we have
\begin{align*}
  \Pi(x) &=  \sum_{k=1}^{r} 2^{k-1} p^{s_k} (p^n+1)^{r-k} F_{T_{k+1}}^{(s_k)}(x) \\
&=  \sum_{k=1}^{r} 2^{k-1} p^{s_k} (p^n+1)^{r-k}  \sum_{i=0}^{n-1} p^i \delta_{T_{k+1}}^{(i+s_k)} x_{i+s_k}.
\end{align*}
For $m\in [s_{d},s_{d+1}[$ and $1\leq d\leq r$, the coefficient in front of $x_m$ in $\Pi(x)$ is
\begin{align*}
c_m & = p^m \left( \sum_{k=1}^d 2^{k-1}(p^n+1)^{r-k} \delta_{T_{k+1}}^{m} + p^{n} \sum_{k=d+1}^{r} 2^{k-1} (p^n+1)^{r-k} \delta_{T_{k+1}}^{m} \right) \\
&= p^m \delta_{T}^{(m)}  \Bigg( \frac{(p^n+1)^{r}-2^{d-1}(p^n+1)^{r+1-d}}{p^n-1} -  2^{d-1}(p^n+1)^{r-d}  + \frac{2^d p^n}{p^n-1} ((p^n+1)^{r-d}-2^{r-d}) \Bigg)  \\
& = p^m \delta_{T}^{(m)}  \bigg( \frac{(p^n+1)^{r} - 2^r p^n}{p^n-1}  \bigg) = \lambda p^m \delta^{(m)}_T.
\end{align*}
This terminates the proof of \eqref{Fs1-relation} in the case when $s_1$ is not removable.

\subsubsection{\texorpdfstring{Proof of the inclusion $\Ccal_S\subset \Ccal^{\cap,+}_S$:}{}}

\begin{proposition} \label{gen-prop}
Let $t\in S$ be an element.
\begin{assertionlist}
    \item If $t\notin T$, then $\gen_{R,S}^{(t)}\in \Ccal_{\pha,S}$.
    \item  If $t\in T$, then $\gen_{R,S}^{(t)}\in \bigcap_{j\in S} \Ccal_{S\setminus\{j\}}$.
\end{assertionlist}
\end{proposition}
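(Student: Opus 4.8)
The plan is to prove the two items separately; item (1) is immediate from the definitions, while item (2) reduces, after a short sign computation, to a single combinatorial fact about $\Phi_R$ under deletion of an element of $S$. For (1): if $t\notin T$ then $\delta_T^{(t)}=1$, so by the observation made just before the proposition $\gen_{R,S}^{(t)}=\ha_{R,S}^{(t)}$; since $t\in S$, the vector $\ha_{R,S}^{(t)}$ occurs with a nonnegative coefficient among the generators of $C_{\pha,S}$ in \eqref{CphaS-eq}, hence $\gen_{R,S}^{(t)}\in\Ccal_{\pha,S}$.

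For (2), write $S=\{s_1,\dots,s_r\}$ ordered as in section \ref{order-sec}, say $t=s_m$, and for $j=s_k\in S$ set $S_k\colonequals S\setminus\{s_k\}$ and $T_k\colonequals\Phi_R(S_k)$. By definition $\Ccal_{S_k}$ is cut out by the inequalities $F^{(i)}_{T_k}(x)\le 0$ for $i\in S_k$, so it suffices to evaluate each of these on $\gen_{R,S}^{(t)}$. Since $t\in T$ and $\Ccal_{R,S}$ is positive, $\gen_{R,S}^{(t)}=-e_t-p\,\delta_R^{(t-1)}e_{t-1}$, a vector supported on $\{t-1,t\}$; hence $F^{(i)}_{T_k}(\gen_{R,S}^{(t)})$ involves only $\delta_{T_k}^{(t)}$ and $\delta_{T_k}^{(t-1)}$, and an elementary computation gives $F^{(i)}_{T_k}(\gen_{R,S}^{(t)})=-p^{c}\bigl(\delta_{T_k}^{(t)}+\delta_{T_k}^{(t-1)}\delta_R^{(t-1)}\bigr)$ for $i\ne t$ (with $c\equiv t-i\pmod n$), and $F^{(t)}_{T_k}(\gen_{R,S}^{(t)})=-\delta_{T_k}^{(t)}-p^{n}\,\delta_{T_k}^{(t-1)}\delta_R^{(t-1)}$ when $t\in S_k$. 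Both are $\le 0$ as soon as $\delta_{T_k}^{(t-1)}\delta_R^{(t-1)}=1$ (recall $p^n\ge 2$); and since positivity of $\Ccal_{R,S}$ at $t\in S$ gives $\delta_T^{(t-1)}=\delta_R^{(t-1)}$, the whole of (2) reduces to the claim that $\delta_{T_k}^{(t-1)}=\delta_R^{(t-1)}$ for every $k$.

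This is proved using the explicit construction of $\Phi_R$ from Proposition \ref{prop-unique}: on every connected component of $\Gamma_n(R,S')$, the value of $\Phi_R(S')$ at the head $h$ equals $\delta_R^{(h)}$ (positivity at the element of $S'$ following $h$), and it then propagates down the chain by $\delta^{(a)}=-\delta_R^{(a)}\delta^{(a+1)}$ (admissibility). If $k\ne m$ then $t\in S_k$, so $t-1$ is still the head of its component in $\Gamma_n(R,S_k)$ and $\delta_{T_k}^{(t-1)}=\delta_R^{(t-1)}$ follows at once. If $k=m$ (i.e. $j=t$), deleting $t=s_m$ merges the components $[s_{m-1},s_m[$ and $[s_m,s_{m+1}[$ of the diagram into $[s_{m-1},s_{m+1}[$, whose head is still $s_{m+1}-1$; hence $T_m$ agrees with $T$ on $[t,s_{m+1}[$, in particular $\delta_{T_m}^{(t)}=\delta_T^{(t)}=-1$, and therefore $\delta_{T_m}^{(t-1)}=-\delta_R^{(t-1)}\delta_{T_m}^{(t)}=\delta_R^{(t-1)}=\delta_T^{(t-1)}$, after which the two propagations coincide down to $s_{m-1}$; thus $T_m=T$ (so, incidentally, $t\in T$ forces $t$ to be removable from $S$). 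In both cases $\delta_{T_k}^{(t-1)}=\delta_R^{(t-1)}$, which finishes (2).

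The one delicate point is this last paragraph: one must see that deleting an element of $S$ alters the chain diagram, and hence $\Phi_R$, only on a single connected component, keep track of the head of the enlarged component, and verify that the propagation rule is untouched elsewhere. Everything else is a routine sign count. The case $j=t$ — which is exactly the assertion that an element of $T\cap S$ is removable — is where this bookkeeping is actually needed.
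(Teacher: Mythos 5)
Your proof is correct and follows essentially the same route as the paper: part (1) is the identity $\gen_{R,S}^{(t)}=\ha_{R,S}^{(t)}$, and part (2) reduces each inequality $F^{(i)}_{T_j}(\gen_{R,S}^{(t)})\le 0$ to the sign condition $\delta_{T_j}^{(t-1)}\delta_R^{(t-1)}=1$, exactly as the paper does. The one place you go beyond the paper's write-up is the case $j=t$, where $t\notin S\setminus\{j\}$ and positivity of $\Ccal_{S\setminus\{j\}}$ cannot be invoked directly at $t$: your merged-component analysis of $\Phi_R(S\setminus\{t\})$ (with the byproduct that $t\in T\cap S$ forces $t$ to be removable) supplies precisely the step that the paper's proof treats tersely, so there are no gaps.
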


\begin{proof}
When $t\notin S$ then $\gen_{R,S}^{(t)}=\ha_{R,S}^{(t)}\in \Ccal_{\pha,S}$. To prove (2), write again $S=\{s_1,\dots, s_r\}$, where for all $i$, $s_{i+1}$ follows $s_i$ in $S$ (see section \ref{order-sec}). We write $t=s_k$ for some $1\leq k\leq r$ and assume that $t\in T$. For each $1\leq j \leq r$, we need to prove that the vector $x=\gen_{R,S}^{(t)}$ lies in $\Ccal_{S\setminus\{s_j\}}$, i.e. that it satisfies the inequality $F^{(s_d)}_{T_j}(x)\leq 0$ for all $d\neq j$, where $T_j=\Phi(S\setminus\{s_j\})$. Since $t\in T$, we have $x= - e_t-\delta_R^{(t-1)} p e_{t-1}$. 

First, assume that $s_d\neq t$. In this case, the coefficient of $x_{t-1}$ and $x_t$ in the $p$-expression $ F^{(s_d)}_{T_j}(x)$ are $\delta_{T_j}^{(t-1)} p^m$ and $\delta_{T_j}^{(t)} p^{m+1}$ respectively (for a certain integer $m$). Hence, for all $d\neq j$, $1\leq d \leq r$, the sign of $F^{(s_d)}_{T_j}(x)$ when $x=\gen_{R,S}^{(t)}$ is the same as the sign of the expression
\begin{equation}
    -\delta_{T_j}^{(t)} - \delta_{T_j}^{(t-1)} \delta_{R}^{(t-1)}.
\end{equation}
Thus, it suffices to show that not both of $\delta_{T_j}^{(t)}$ and $\delta_{T_j}^{(t-1)} \delta_{R}^{(t-1)}$ are equal to $-1$. Thus we may assume $t\notin T_j$. Since $t\in T$ by assumption, we are reduced to the case $j=k+1$ (in all other cases one would have $t\in T_j$). But in this case, we have $t\in S\setminus \{s_j\}$ so the admissibility condition for $\Ccal_{S\setminus\{s_j\}}$ implies that $\delta_{T_j}^{(t-1)} \delta_{R}^{(t-1)}=1$.

Finally, we consider the case when $s_d=t$. In particular we have $t\in S\setminus \{s_j\}$.
In this case, the coefficients of $x_{t-1}$ and $x_t$ in the $p$-expression $F^{(t)}_{T_j}(x)$ are $\delta_{T_j}^{(t-1)} p^{n-1}$ and $\delta_{T_j}^{(t)}$ respectively. Hence, the sign of $F^{(t)}_{T_j}(x)$ when $x=\gen_{R,S}^{(t)}$ is the same as the sign of
\begin{equation}
    -\delta_{T_j}^{(t)} - p^n \delta_{T_j}^{(t-1)} \delta_{R}^{(t-1)}.
\end{equation}
Again, since $t\in S\setminus \{s_j\}$ in this case we deduce by admissibility that $\delta_{T_j}^{(t-1)} \delta_{R}^{(t-1)}=1$, and the result follows.
\end{proof}

Finally, we complete the proof of the inclusion $\Ccal_S\subset \Ccal^{\cap,+}_S$. Recall that the kernel $\KK(\Ccal_S)$ coincides with $\KK(\Ccal_{\pha,S})$. Moreover, $\Ccal_S$ is $\QQ_{\geq 0}$-generated modulo kernel by the elements $\gen_{R,S}^{(t)}$ for $t\in S$. By Proposition \ref{gen-prop}, we have $\gen_{R,S}^{(t)}\in \Ccal^{\cap,+}_S$ for all $t\in S$. It follows that for any $x\in \Ccal_S$, there is an integer $N\geq 1$ such $Nx\in \Ccal^{\cap,+}_S$. By definition the cone $\Ccal^{\cap,+}_S$ is saturated, so we have $x\in \Ccal^{\cap,+}_S$. This terminates the proof of Theorem \ref{main-thm-hom}.

\subsection{Hasse-regularity}

Let $X=X_R$ be a Shimura variety of $A_1$-type and parabolic type $R$. Recall by \eqref{inclu-w} that for any subset $S\subset E_n$ we have inclusions
\begin{equation}
    \Ccal_{\pha,S} \ \subset \ \Ccal_{X,S} \ \subset \ \Ccal_S
\end{equation}
where we used the equality $\Ccal_S^{\cap,+}=\Ccal_S$ (Theorem \ref{main-thm-hom}) and we omitted the cone $\Ccal_{\flag, S}$ that we will not use in what follows.

\begin{definition}
We say that $S$ is Hasse-regular if $\Ccal_{\pha,S} \ = \ \Ccal_{X,S}$.
\end{definition}

In particular, if $\Ccal_{S} = \Ccal_{\pha,S}$ then $S$ is obviously Hasse-regular. The notion of Hasse-regularity can be defined for arbitrary Hodge-type Shimura varieties. In the case of unitary Shimura varieties of signature $(n-1,1)$ at a split prime, Goldring and the author have shown in \cite{Goldring-Koskivirta-GS-cone} that the stratum $\Flag(X)_w$ is Hasse-regular for all elements $w\in W$ in the closure of the element $w_{0,I}w_0$ (the longest element of ${}^I W$). In the theorem below, we give a characterization of the strata $S$ which satisfy $\Ccal_{S}=\Ccal_{\pha,S}$. When $|S|=1$ this equality holds by definition. Let $\Gamma$ be the chain diagram of parabolic type $R$ and stratum $S$. Write $S^c$ for the complement of $S$ in $E_n$. Write $\# \ F$ to denote the number of element of a finite set $F$.

\begin{theorem} \label{thm-hasse-reg}
Assume $|S|\geq 2$. Then the following are equivalent:
\begin{equivlist}
    \item One has $\Ccal_S = \Ccal_{\pha,S}$.
        \item $\Ccal_{\pha,S}$ is homogeneous.
    \item For any connected component $C\subset \Gamma$, the numbers $\# \ C\cap R$ and $\# \ C$ have different parity.
\end{equivlist}
\end{theorem}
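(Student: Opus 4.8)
My plan is to prove the chain (i)~$\Leftrightarrow$~(ii)~$\Leftrightarrow$~(iii), where the first equivalence is essentially formal and the second carries the content.

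\emph{(i)~$\Leftrightarrow$~(ii).} Both $\Ccal_S=\Ccal_{R,S}$ and $\Ccal_{\pha,S}$ are positive, admissible, $S$-adapted $p$-cones: the first by Proposition~\ref{prop-unique}, the second because it is an $S$-adapted $p$-cone by Proposition~\ref{prop-pha-cone}, admissible by Lemma~\ref{admiss-lem} (it contains $\KK_S$ by construction), and positive as recorded after Definition~\ref{def-posit}. The first point to establish is that the defining function $\gamma_\Ccal\colon S\to\Pcal(E_n)$ is an invariant of an $S$-adapted $p$-cone: by Lemma~\ref{Sadap-lin-indep} the $|S|$ defining inequalities are linearly independent, hence are exactly the facets of $\Ccal$, and from each facet one recovers its $p$-expression up to a positive scalar, which is then pinned down by normalising the coefficient of smallest absolute value to $\pm1$ (this recovers the starting index and all the signs $\varepsilon_j$). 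Therefore ``$\Ccal_{\pha,S}$ is homogeneous'' means precisely ``$\rho_{\pha,S}$ is constant''; and if that holds, $\Ccal_{\pha,S}$ is a positive admissible \emph{homogeneous} $S$-adapted $p$-cone, hence equals $\Ccal_S$ by the uniqueness in Proposition~\ref{prop-unique}, while conversely $\Ccal_{\pha,S}=\Ccal_S$ forces homogeneity since $\Ccal_S$ is homogeneous.

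\emph{A formula for $\rho_{\pha,S}$.} Write $S=\{s_1,\dots,s_r\}$ cyclically, let $C_c$ be the connected component of $\Gamma=\Gamma_n(R,S)$ with tail $s_c$, and put $\epsilon_c:=1$ if $\#(C_c\cap R)\equiv\#C_c\pmod2$ and $\epsilon_c:=0$ otherwise, so that (iii) reads ``$\epsilon_c=0$ for all $c$''. Using Proposition~\ref{prop-pha-cone} together with the adjugate computation of Lemma~\ref{lem-Adj}, the sign of the $x_j$-coefficient of the $p$-expression of $\Ccal_{\pha,S}$ with starting index $i=s_d$ equals $(-1)^{|R|}$ times $(-1)^{\#(S^c\cap A)+\#(R\cap A')}$, where $A$ is the cyclic arc from $j+1$ to $i-1$ and $A'$ the cyclic arc from $i$ to $j-1$; hence $j\in\rho_{\pha,S}(i)$ iff $\#(S^c\cap A)+\#(R\cap A')\equiv|R|+1\pmod2$. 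Taking $j=s_e$, using $S^c\cap C_c=C_c\setminus\{s_c\}$, observing that $A$ and $A'$ together exhaust all components, and reducing $|C_c|-1-\#(R\cap C_c)\equiv\epsilon_c\pmod2$, one should arrive at
\[
s_e\in\rho_{\pha,S}(s_d)\iff \sum_{c=e}^{d-1}\epsilon_c\ \text{is odd}\qquad(e\ne d),
\]
the sum being cyclic, while for $j=s_d$ the two degenerate arcs give $s_d\in\rho_{\pha,S}(s_d)$ iff $\sum_{\text{all }c}\epsilon_c$ is odd; the case $j=i-1$ should reproduce the positivity relation $i-1\in\rho_{\pha,S}(i)\Leftrightarrow i-1\in R$ as a consistency check. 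I expect this arc-and-parity bookkeeping — in particular handling singleton components and the degenerate arcs that would otherwise ``wrap around'' the whole cycle — to be the main obstacle; everything else is short. (The same count also yields $s_c\in\Phi_R(S)$ iff $\epsilon_c=1$, so (iii)~$\Leftrightarrow$~$S\cap\Phi_R(S)=\emptyset$; this gives a quicker route to (iii)~$\Rightarrow$~(i) through Proposition~\ref{gen-prop}(1), since then every generator $\gen^{(t)}_{R,S}$ coincides with the partial Hasse vector.)

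\emph{(ii)~$\Leftrightarrow$~(iii).} Granting the formula, this is a short case analysis. Each $\rho_{\pha,S}(s_d)$ is admissible, hence determined by its trace $\rho_{\pha,S}(s_d)\cap S$ (the discussion after Definition~\ref{def-admis}), so $\rho_{\pha,S}$ is constant iff all these traces coincide. If (iii) holds, every $\epsilon_c=0$ and the formulas make each trace empty, so $\rho_{\pha,S}$ is constant. If (iii) fails, pick $c_0$ with $\epsilon_{c_0}=1$; then $s_{c_0}\in\rho_{\pha,S}(s_{c_0+1})$. If $\sum_{\text{all }c}\epsilon_c$ is even, then $s_{c_0}\notin\rho_{\pha,S}(s_{c_0})$, so the traces at $s_{c_0+1}$ and $s_{c_0}$ already differ at $s_{c_0}$; if $\sum_{\text{all }c}\epsilon_c$ is odd, then $s_{c_0+1}\in\rho_{\pha,S}(s_{c_0+1})$ but $s_{c_0+1}\notin\rho_{\pha,S}(s_{c_0})$ (the relevant cyclic sum being $\sum_{\text{all }c}\epsilon_c-\epsilon_{c_0}$, which is even), so the traces differ at $s_{c_0+1}$. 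In either case $\rho_{\pha,S}$ is not constant — here the hypothesis $|S|\ge2$ is used, to ensure $s_{c_0}\ne s_{c_0+1}$ — which closes the chain of equivalences.
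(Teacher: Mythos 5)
Your proposal is correct and follows essentially the same route as the paper: (i)$\Leftrightarrow$(ii) via the uniqueness statement of Proposition~\ref{prop-unique}, and (ii)$\Leftrightarrow$(iii) by extracting the signs of $\rho_{\pha,S}$ from Proposition~\ref{prop-pha-cone} and reducing constancy to a parity condition on the connected components. The only organizational difference is that the paper tracks how the relevant parity changes when the starting index $i$ moves to its successor (picking up $\#(C\cap R)+\#(C\cap S^c)$ for the component $C$ in between), whereas you derive closed formulas for the traces $\rho_{\pha,S}(s_d)\cap S$ in terms of the component parities $\epsilon_c$ and use admissibility to reduce to these traces — the same computation, packaged slightly differently.
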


\begin{proof}
Since $\Ccal_{\pha,S}$ is a positive, admissible, $S$-adapted $p$-cone, (i) and (ii) are obviously equivalent. Using the description of the function $\rho_{\pha,S}$ given in \eqref{rho-pha}, we see that $\rho_{\pha,S}$ is a constant function if and only if for any $j\in E_n$, the parity of the number
\begin{equation}
   \# \ [j,i-1[ \ \cap \ R \ \ + \ \ \# \  ]j,i-1[ \ \cap \ S^c 
\end{equation}
is independent of $i\in S$. If we change $i$ to the element $i^+$ that follows $i$ in $S$, the above number changes by the quantity $\# \ C\cap R \ + \ \# \ C\cap S^c$ where $C$ is the connected component of $i^+$. Since $i$ can be chosen arbitrarily in $S$ and $|S|\geq 2$, we deduce that condition (ii) is equivalent to $\# \ C\cap R \ \equiv \ \# \ C\cap S^c \pmod{2}$ for all connected components $C$ in $\Gamma$. Since each connected component $C$ has a unique element in $S$ by definition of $\Gamma$, one has $\# \ C\cap S^c \ = \ (\# \ C) -1$. The result follows.

\end{proof}

In particular, when the equivalent conditions of Theorem \ref{thm-hasse-reg} are satisfied, we have equalities
$\Ccal_{\pha,S} \ = \ \Ccal_{X,S} \ = \ \Ccal_S$. Therefore, one could say as a slogan that for those strata, "the weight of any nonzero mod $p$ automorphic form on $\overline{\Flag}(X)_S$ is spanned by the weights of partial Hasse invariants of the stratum".

\subsection{The general case}
So far we have only considered the case when $G$ is a Weyl restriction $G=\Res_{\FF_{p^n}/\FF_p}(\GL_{2,\FF_{p^n}})$. In the context of Shimura varieties of $A_1$-type explained in \ref{Shim-A1-type-sec}, this group corresponds to the case when $p$ is inert in the totally real field $\mathbf{F}$. If $p$ follows a more general ramification pattern, we need to consider a group of the form
\begin{equation}
    G=\Res_{\FF_{p^{m_1}}/\FF_p}(\GL_{2,\FF_{p^{m_1}}}) \times \dots \times \Res_{\FF_{p^{m_r}}/\FF_p}(\GL_{2,\FF_{p^{m_r}}})
\end{equation}
where $n=m_1+\dots + m_r$ is a partition of $n$ by positive integers. In this case, one can apply to each $\FF_p$-factor $G_i=\Res_{\FF_{p^{m_i}}/\FF_p}(\GL_{2,\FF_{p^{m_i}}})$ all of the theory of $p$-cones carried out in sections \ref{p-cone-sec} and \ref{coho-van-sec}. As we explained before, all group-theoretical objects decompose with respect to the $\FF_p$-factors. For any subset $S\subset E_n$, we may decompose $S=S_1\sqcup \dots \sqcup S_r$ for subsets $S_i$ corresponding to the $\FF_p$-factors and similarly $R=R_1\sqcup \dots \sqcup R_r$ for the parabolic type. Then, define $\Ccal_{R,S}$ as the direct product of the $\Ccal_{R_i,S_i}$. With this definition, Theorem \ref{vanish-thm} on the vanishing of cohomology holds without modification. Similarly, a version of Theorem \ref{thm-hasse-reg} can easily be formulated in this more general setting as well.

\subsection{The maximal stratum}

We return to the case $G=\Res_{\FF_{p^n}/\FF_p}(\GL_{2,\FF_{p^n}})$. We now restrict ourselves to the case of the maximal stratum, parametrized by the longest element $w_0\in W$. For groups of $A_1$-type, it corresponds to the subset $S=E_n$. In this case, note that we have
\begin{equation}
    \Phi_R(E_n)=R.
\end{equation}
By Theorem \ref{main-thm-hom}, we know that the cone $\Ccal_X$ is contained in the cone $\Ccal_{E_n}$, which is defined by the $n$ inequalities
\begin{equation}
    F^{(i)}_{R}(x)\leq 0, \quad x\in \ZZ^n, \quad i=1,\dots,n.
\end{equation}
The method used to prove this result does not use the fact that the map $\zeta_{\flag}\colon \Flag(X) \to \GF^\mu$ is a base extension of a map $\zeta\colon X\to \GZip^\mu$. Hence it would hold more generally for any smooth morphism $\zeta'\colon Y\to \GF^\mu$ satisfying certain conditions (replacing the stratum $\Flag(X)_w$ with the preimage $\zeta'^{-1}(\Flag_w)$). The fact that $\zeta_{\flag}$ is a base extension of $\zeta$ forces $\Ccal_X$ to be contained in the $L$-dominant cone $X^*_{+,I}(T)$, as we mentioned in \eqref{precise-inclu-w0}. Therefore, the cone $\Ccal_X$ satisfies the additional constraints
\begin{equation}
    x_i\geq 0 , \quad i\in R.
\end{equation}
Here we determine a set of generators for the intersection $\Ccal_{E_n} \cap X^*_{+,I}(T)$. Denote the elements of $E_n\setminus R$ as follows:
\begin{equation}
    E_n \setminus R=\{r_1,\dots, r_h\}
\end{equation}
where $h=n-|R|$. Again, we assume for each $i$ that $r_{i+1}$ is the element of $E_n\setminus R$ that follows $r_i$ (where $r_{h+1}=r_1$). Let $g_i$ denote the gap between $r_{i-1}$ and $r_i$, i.e. the smallest positive integer $g_i$ such that $r_{i-1}+g_i=r_i$. Let $(e_1,\dots , e_n)$ denote the canonical basis of $\ZZ^n$ and extend the definition of $e_i$ to any $i\in \ZZ$ by $n$-periodicity. For each $1\leq i \leq h$ and each $k=1,\dots , g_i-1$ (when $g_i\geq 2$), define the following elements
\begin{equation} \label{gen-zip-1}
    \lambda^{(i)}_{k} \colonequals e_{r_i} + p^k e_{r_i-k}
\end{equation}
Moreover, for $k=g_i$ define
\begin{equation}\label{gen-zip-2}
    \lambda^{(i)}_{g_i} \colonequals e_{r_i} - p^{g_i} e_{r_{i-1}}.
\end{equation}
We thus obtain exactly $n=\sum_{i=1}^h g_i$ weights of the form $\lambda^{(i)}_{k}$ for $1\leq i \leq h$ and $1\leq k \leq g_i$. It is immediate to check that the weights $\lambda^{(i)}_{k}$ are $L$-dominant and satisfy the inequalities $F^{(i)}_R(x)\leq 0$ for $i=1,\dots, n$, thus they lie in the intersection $\Ccal_{E_n} \cap X^*_{+,I}(T)$.

\begin{proposition}\label{prop-cone-X-dom}
The cone $\Ccal_{E_n} \cap X^*_{+,I}(T)$ is $\QQ_{\geq 0}$-generated by the weights $\lambda^{(i)}_{k}$ for $1\leq i \leq h$ and $1\leq k \leq g_i$.
\end{proposition}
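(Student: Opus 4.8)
The plan is to show the two inclusions separately. One inclusion is immediate: each $\lambda^{(i)}_k$ was already observed to lie in $\Ccal_{E_n}\cap X^*_{+,I}(T)$, and since $X^*_{+,I}(T)$ and the $p$-cone $\Ccal_{E_n}$ are both saturated, so is their intersection; hence the $\QQ_{\geq 0}$-span of the $\lambda^{(i)}_k$ is contained in it. The work is in the reverse inclusion: given $\mathbf{x}=(x_1,\dots,x_n)\in\ZZ^n$ with $x_i\geq 0$ for all $i\in R$ and $F^{(i)}_R(\mathbf{x})\leq 0$ for $i=1,\dots,n$, I want to write $\mathbf{x}$ as a nonnegative rational combination of the $\lambda^{(i)}_k$. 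Since the number of generators is exactly $n$ and they are (as I will check) linearly independent over $\QQ$, there is a \emph{unique} way to write any $\mathbf{x}\in\QQ^n$ as a $\QQ$-combination $\mathbf{x}=\sum_{i,k}\mu^{(i)}_k\lambda^{(i)}_k$; the whole content is to prove that the $\mu^{(i)}_k$ are $\geq 0$ whenever $\mathbf{x}$ satisfies the defining inequalities.

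The key step is to make the change of basis explicit. I would fix an index $i$ with $1\leq i\leq h$ and look at how the $x_m$ with $m$ running over the ``block'' $\{r_{i-1}+1,\dots,r_i\}$ (the residues strictly between consecutive elements of $E_n\setminus R$, together with $r_i$ itself) are expressed through the generators $\lambda^{(i)}_1,\dots,\lambda^{(i)}_{g_i}$. By \eqref{gen-zip-1}–\eqref{gen-zip-2}, each $\lambda^{(i)}_k$ with $k<g_i$ only involves coordinates $r_i$ and $r_i-k$, while $\lambda^{(i)}_{g_i}$ involves $r_i$ and $r_{i-1}=r_i-g_i$. So within a block the system is triangular except for the shared coordinate $x_{r_i}$, and I can solve it directly. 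Concretely, I expect the coefficient $\mu^{(i)}_k$ for $1\leq k\leq g_i-1$ to come out as a signed combination of $x_{r_i-k}$ and $x_{r_i-k+1}$ (with appropriate powers of $p$), i.e. essentially $\pm p^{-k}(x_{r_i-k}-px_{r_i-k+1})$-type expressions, using that the intermediate coordinates $r_i-1,\dots,r_i-g_i+1$ all lie in $R$ so their $x$-values are nonnegative; and $\mu^{(i)}_{g_i}$ (the one ``wrap-around'' coefficient per block, attached to $\lambda^{(i)}_{g_i}=e_{r_i}-p^{g_i}e_{r_{i-1}}$) is where the inequality $F^{(i)}_R(\mathbf{x})\leq 0$ with starting index the relevant element of $E_n\setminus R$ gets used: unwinding $F^{(\cdot)}_R$ along the cycle, the partial sums telescope against the first $g_i-1$ generators and what remains is exactly (a positive multiple of) $-F^{(r_{i})}_R(\mathbf{x})\geq 0$, or a nonnegative combination of several of the $n$ inequalities. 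I would set up the telescoping carefully: writing $F^{(r_i)}_R(\mathbf{x})=\sum_{j=0}^{n-1}p^j\delta_R^{(r_i+j)}x_{r_i+j}$ and grouping the sum block by block, each block contributes a term that is ``absorbed'' by the $\lambda^{(i)}_k$ of that block, leaving the sign of $\mu^{(i)}_{g_i}$ controlled by the full $p$-expression.

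The main obstacle I anticipate is bookkeeping the cyclic indexing and the signs $\delta_R^{(m)}$ correctly across block boundaries — in particular verifying that every coordinate $x_m$ with $m\notin E_n\setminus R$ really does lie in $R$ (so $x_m\geq 0$ is available) and that the leading coefficient $\delta_R^{(r_i-1)}$ in each relevant $p$-expression is $+1$ because $r_i-1$ is either in $R$ or, when $g_i=1$, handled by the $\lambda^{(i)}_{g_i}$ term directly. A cleaner way to organize this, which I would adopt, is: first handle the case $R=\emptyset$ (so $h=n$, all $g_i=1$, and the generators are just $\lambda^{(i)}_1=e_i-p\,e_{i-1}$), where the claim reduces to the statement that $\Ccal_{E_n}$ is $\QQ_{\geq 0}$-generated modulo kernel by the $\gen^{(i)}_{R,E_n}$ — which is already essentially \eqref{CS-eq} with $S=E_n$, $T=\Phi_R(E_n)=R=\emptyset$; then reduce the general case to this by observing that imposing $x_i\geq 0$ for $i\in R$ and contracting the ``$R$-intervals'' recovers the same telescoping pattern, with the extra generators $\lambda^{(i)}_k$ ($k<g_i$) accounting precisely for the freedom in the $R$-coordinates. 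With that structure in place, the positivity of the $\mu^{(i)}_k$ follows from the $n$ inequalities plus the $L$-dominance inequalities, and, since $\Ccal_{E_n}\cap X^*_{+,I}(T)$ is saturated, the passage from a $\QQ_{\geq 0}$-combination to the statement over $\ZZ^n$ is automatic.
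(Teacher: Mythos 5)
Your proposal is correct and follows essentially the same route as the paper: check that the $n$ generators are linearly independent, invert the change of basis explicitly, and read off nonnegativity of the coefficients from $x_m\geq 0$ for $m\in R$ together with the telescoped inequalities $F^{(\cdot)}_R(\mathbf{x})\leq 0$ for the wrap-around generators $\lambda^{(i)}_{g_i}$. The only inaccuracy is your guessed form of the intermediate coefficients: since each $e_{r_i-k}$ with $r_i-k\in R$ occurs in exactly one generator, the coefficient of $\lambda^{(i)}_k$ ($k<g_i$) is exactly $p^{-k}x_{r_i-k}$ rather than a two-term combination (a two-term expression like $x_{r_i-k}-px_{r_i-k+1}$ would not be nonnegative from $L$-dominance alone), and the coefficient of $\lambda^{(i)}_{g_i}$ is a positive multiple of $-F^{(r_{i-1}+1)}_R(\mathbf{x})$, exactly as your telescoping predicts.
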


It will be convenient to modify $\lambda^{(i)}_{k}$ by defining $\eta^{(i)}_{k}\colonequals \frac{1}{p^k} \lambda^{(i)}_{k}$. Before giving the proof, we start with the following easy lemma.

\begin{lemma}
The elements $\lambda^{(i)}_{k}$ (for $1\leq i \leq h$, $1\leq k \leq g_i$) form a basis of $\QQ^n$.
\end{lemma}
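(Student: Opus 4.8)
The plan is to show that the $n$ vectors $\lambda^{(i)}_k$ are linearly independent over $\QQ$, since a set of $n$ linearly independent vectors in $\QQ^n$ is automatically a basis. It is slightly more convenient to work with the rescaled vectors $\eta^{(i)}_k = p^{-k}\lambda^{(i)}_k$, which span the same $\QQ$-subspace; thus it suffices to prove the $\eta^{(i)}_k$ are linearly independent.

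First I would observe that the vectors naturally group by the ``block'' index $i$: for fixed $i$, the vectors $\eta^{(i)}_1,\dots,\eta^{(i)}_{g_i}$ involve only the coordinates indexed by $r_i$ and $r_i-1, r_i-2,\dots, r_i-(g_i-1)=r_{i-1}+1$, together with (for $k=g_i$) the coordinate $r_{i-1}$. In other words, each $\eta^{(i)}_k$ for $k<g_i$ is supported on $\{e_{r_i}, e_{r_i-k}\}$ with $r_i-k$ strictly inside the ``gap interval'' $]r_{i-1}, r_i[$, while $\eta^{(i)}_{g_i}$ is supported on $\{e_{r_i}, e_{r_{i-1}}\}$. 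Since $E_n\setminus R = \{r_1,\dots,r_h\}$ and the $g_i$ partition $E_n$ (we have $\sum_i g_i = n$), each standard basis vector $e_m$ of $\ZZ^n$ occurs in exactly one ``gap interval'' $[r_{i-1}+1, r_i]$, namely for the unique $i$ with $m\in\, ]r_{i-1},r_i]$.

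The key step is then a triangularity argument. Suppose $\sum_{i,k} c^{(i)}_k \eta^{(i)}_k = 0$ for scalars $c^{(i)}_k\in\QQ$. For a fixed block $i$, look at the coordinate indexed by $r_i - k$ for $1\le k\le g_i-1$: among all the vectors $\eta^{(j)}_\ell$, the only one with a nonzero entry in that coordinate is $\eta^{(i)}_k$ itself (for $k<g_i$ the entry is $1$), because $r_i-k$ lies strictly inside the $i$-th gap interval and is not equal to any $r_j$. This forces $c^{(i)}_k = 0$ for all $i$ and all $1\le k\le g_i-1$. What remains is $\sum_{i=1}^h c^{(i)}_{g_i}\eta^{(i)}_{g_i}=0$, where $\eta^{(i)}_{g_i}= e_{r_i}-e_{r_{i-1}}$ is supported on the coordinates $r_i$ and $r_{i-1}$. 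Reading off the coordinate indexed by $r_i$ (which, among the $\eta^{(j)}_{g_j}$, appears with coefficient $+1$ in $\eta^{(i)}_{g_i}$ and with coefficient $-1$ in $\eta^{(i+1)}_{g_{i+1}}$), we get the system $c^{(i)}_{g_i}-c^{(i+1)}_{g_{i+1}}=0$ for all $i$ (indices cyclic), i.e. all the $c^{(i)}_{g_i}$ are equal, say to a common value $c$; then $\sum_i c\,\eta^{(i)}_{g_i}=c\sum_i(e_{r_i}-e_{r_{i-1}})=0$ is automatically satisfied for any $c$. So this last bit is \emph{not} forced to vanish, which would be a problem.

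The resolution — and the one subtlety I expect to be the main point to get right — is that I should \emph{not} pass to the $\eta$'s for the $k=g_i$ vectors, or rather I should keep the original $\lambda^{(i)}_{g_i} = e_{r_i}-p^{g_i}e_{r_{i-1}}$, whose coefficients on the two coordinates have distinct absolute values. After the triangularity step above kills all $c^{(i)}_k$ with $k<g_i$, the remaining relation is $\sum_i d_i\,\lambda^{(i)}_{g_i}=0$ with $d_i = c^{(i)}_{g_i}$; reading the coordinate indexed by $r_i$ now gives $d_i - p^{g_{i+1}} d_{i+1} = 0$ for all $i$ cyclically (where I use that the only $\lambda^{(j)}_{g_j}$ contributing to the $r_i$-coordinate are $\lambda^{(i)}_{g_i}$, contributing $+1\cdot d_i$, and $\lambda^{(i+1)}_{g_{i+1}}$, contributing $-p^{g_{i+1}}d_{i+1}$). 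Composing these relations around the cycle yields $d_1 = p^{g_2+g_3+\cdots+g_h+g_1}\, d_1 = p^{n} d_1$, forcing $d_1=0$ since $p^n\ne 1$, and then $d_i=0$ for all $i$. Hence all coefficients vanish and the $\lambda^{(i)}_k$ are linearly independent, so they form a basis of $\QQ^n$. I would take care to state the indexing conventions ($r_{h+1}=r_1$, $n$-periodicity of $e_m$, $g_i\ge 1$, and the degenerate case $g_i=1$ where there are no vectors $\lambda^{(i)}_k$ with $k<g_i$) explicitly so the triangularity bookkeeping is unambiguous.
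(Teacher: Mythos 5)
Your proof is correct, but it takes a genuinely different route from the paper's. The paper passes to the limit $p\to\infty$: after dividing by $p^k$, the vectors $\eta^{(i)}_k$ converge to a signed permutation of the standard basis, so the determinant of the $\eta$-matrix tends to $\pm 1$ and is therefore nonzero for $p$ sufficiently large. Taken literally this only establishes the lemma for large $p$; to make it rigorous for every $p\geq 2$ one would supplement it with an observation such as strict diagonal dominance of the $\eta$-matrix (each column has a diagonal entry $\pm 1$ and a single off-diagonal entry of absolute value $p^{-k}\leq 1/2$). Your approach is an explicit elimination instead: the coordinate $r_i-k$ for $1\leq k\leq g_i-1$ appears in exactly one of the $n$ vectors, which kills all $c^{(i)}_k$ with $k<g_i$ immediately, and the remaining cyclic system $d_i - p^{g_{i+1}}d_{i+1}=0$ composes around the cycle to $d_1 = p^{n}d_1$, forcing $d_i=0$ since $p^n\neq 1$. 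This identifies the exact obstruction $1-p^n$ and works unconditionally for all $p\geq 2$; it is in effect the computation that the paper's "\,the result follows\," is quietly invoking. Your version is more self-contained and, given how terse the paper's proof is, arguably preferable.

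One small slip worth correcting in your write-up: when you first try running the argument with the $\eta$'s, you write $\eta^{(i)}_{g_i}= e_{r_i}-e_{r_{i-1}}$ and conclude that the residual system degenerates. In fact $\eta^{(i)}_{g_i}= p^{-g_i}e_{r_i}-e_{r_{i-1}}$, so the $r_i$-coordinate of the residual relation reads $p^{-g_i}c^{(i)}_{g_i}-c^{(i+1)}_{g_{i+1}}=0$, which chains to $c^{(1)}_{g_1}=p^{-n}c^{(1)}_{g_1}$ and also forces vanishing. In other words, the obstruction you ran into came from dropping the $p^{-g_i}$ factor, not from using the $\eta$-normalization; your fix of reverting to the $\lambda$'s is equally valid, but the $\eta$-version would have gone through too if computed correctly, and the $p^{-g_i}$ correction you overlooked is precisely the feature that keeps the paper's limit argument from actually being a degenerate cyclic system.
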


\begin{proof}
When $p$ goes to infinity, $\eta^{(i)}_{k}$ converges to $\pm e_{r_i-k}$, which are clearly linearly independent vectors. The result follows.
\end{proof}

We now prove Proposition \ref{prop-cone-X-dom} by showing that any $x=(x_1,\dots,x_n) \in \Ccal_{E_n} \cap X^*_{+,I}(T)$ is a linear combination with non-negative rational coefficients of the $\eta^{(i)}_{k}$. For each $1\leq j\leq n$, define $k_j$ as follows:
\begin{equation}
    k_j\colonequals \max \{ u\geq 1 \ | \ e_{j+u} \notin R \}.
\end{equation}
Consider the matrix $M$ whose $j$th column is the vector 
\begin{equation}
    \delta_R^{(j)} e_j + p^{-k_j} e_{j+k_j}. 
\end{equation}
Note that this vector is of the form $\eta^{(i)}_{k}$ for $i$ such that $e_i=e_{j+k_j}$ and $e_j=e_{r_i-k}$. We have simply changed the ordering of the vectors so that the diagonal coefficients of the matrix are $\pm 1$. Then, one checks easily that for $x\in \ZZ^n$, the vector $y=M^{-1} x$ satisfies the following: If $i\in R$, then the $i$-th coordinate of $y$ is 
\begin{equation}
   y_i = x_i.
\end{equation}
For $i\notin R$, the $i$th coordinate of $y$ is
\begin{equation}
  y_i = - \frac{1}{p^n-1} \sum_{u=0}^{n-1} \delta^{(i+1+u)}_R p^{u-1} x_{i+1+u} = - \frac{1}{p(p^n-1)} F^{(i+1)}_{R}(x).
\end{equation}
Since $x\in \Ccal_{E_n} \cap X^*_{+,I}(T)$, we obtain $y_i\geq 0$ for all $1\leq i \leq n$. This terminates the proof of Proposition \ref{prop-cone-X-dom}.

\subsection{The Cone Conjecture} \label{sec-zip-proof}

Recall that we defined in section \ref{low-cone-sec} the lowest weight cone $\Ccal_w$. We also explained that under a certain condition (satisfied for groups that are Weyl restrictions of split $\FF_p$-groups) the cone $\Ccal_{\lwsf}$ is contained in $\Ccal_{\zip}$. Let $X=X_R$ be a Hodge-type Shimura variety of $A_1$-type and parabolic type $R\subset E_n$. By the equality $\Ccal_{E_n}^{\cap,+} = \Ccal_{E_n}$ (Theorem \ref{main-thm-hom}), the inclusions \eqref{all-inclu-w0} and the containment $\Ccal_{\lwsf}\subset \Ccal_{\zip}$, we obtain:
\begin{equation}
    \Ccal_{\lwsf} \ \subset \ \Ccal_{\zip} \ \subset \ \Ccal_X \ \subset \ \Ccal_{E_n} \cap X^*_{+,I}(T).
\end{equation}
Recall that the Cone Conjecture \ref{conj-cones} predicts that we have $\Ccal_{\zip} = \Ccal_X$.

\begin{theorem}\label{thm-cone-conj}
When $X=X_R$ is a Hodge-type Shimura variety of $A_1$-type, Conjecture \ref{conj-cones} holds true. More precisely, one has 
\begin{equation}\label{equal-4}
    \Ccal_{\lwsf} \ = \ \Ccal_{\zip} \ = \ \Ccal_{X} \ = \ \Ccal_{E_n} \cap X^*_{+,I}(T).
\end{equation}
This cone is generated over $\QQ_{\geq 0}$ by the weights $\lambda_k^{(i)}$ defined in \eqref{gen-zip-1} and \eqref{gen-zip-2}.
\end{theorem}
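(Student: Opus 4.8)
The chain of inclusions $\Ccal_{\lwsf} \subset \Ccal_{\zip} \subset \Ccal_X \subset \Ccal_{E_n}\cap X^*_{+,I}(T)$ is already established: the middle inclusion is the tautological pullback along $\zeta$, the last one combines \eqref{precise-inclu-w0} with Theorem \ref{main-thm-hom} (for $S=E_n$), and the first is the containment $\Ccal_{\lwsf}\subset\Ccal_{\zip}$ from section \ref{low-cone-sec}, valid here because $G$ is a Weyl restriction of a split $\FF_p$-group. Hence the whole theorem reduces to the single reverse inclusion
\[
    \Ccal_{E_n}\cap X^*_{+,I}(T) \ \subset \ \Ccal_{\lwsf}.
\]
The plan is to prove this by showing that each of the explicit $\QQ_{\geq 0}$-generators $\lambda^{(i)}_k$ of $\Ccal_{E_n}\cap X^*_{+,I}(T)$ provided by Proposition \ref{prop-cone-X-dom} lies in $\Ccal_{\lwsf}$, i.e. satisfies (up to a positive rational multiple, which is harmless since $\Ccal_{\lwsf}$ is a saturated cone by construction) the inequalities \eqref{formula-norm-low} of Theorem \ref{thm-norm-low}. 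Since $\Ccal_{\lwsf}$ is closed under $\QQ_{\geq 0}$-combinations, this suffices.

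First I would make the left-hand side of \eqref{formula-norm-low} completely explicit in the present setting. Here $P_0 = \bigcap_{i\in\ZZ}\sigma^i(P)$ is the largest $\FF_p$-parabolic inside $P$; for a Weyl restriction $\Res_{\FF_{p^n}/\FF_p}(\GL_{2,\FF_{p^n}})$ with parabolic type $R\subsetneq E_n$ one has $R\neq \sigma R$ in general, so $P_0 = B$, $L_0 = T$, $I_0=\emptyset$, $W_{L_0}(\FF_p)$ is trivial, $\lambda_0 = w_{0,I}\lambda = \delta_R\cdot\lambda$, and $\Delta^{P_0}=\Delta$ is the full set of $n$ simple roots $\alpha_1,\dots,\alpha_n$ (one per $\GL_2$-factor). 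With $r_{\alpha_i}=n$ for every simple root, the inequality \eqref{formula-norm-low} for the root in the $i$-th factor becomes exactly a $p$-expression inequality in $\mathbf{k}\in\ZZ^n$: unwinding $\langle \sigma^j(\lambda_0),\alpha_i^\vee\rangle$ and the Frobenius action $\sigma e_i = e_{i+1}$, one should recover precisely the form $F^{(i)}_{R}(x)\leq 0$ from section \ref{subsec-pcones} — this is the same computation that identifies $\Ccal_{E_n}$ with the cone cut out by $F^{(i)}_R(x)\leq 0$ for $i=1,\dots,n$. Thus I expect the identity $\Ccal_{\lwsf} = \Ccal_{E_n}\cap X^*_{+,I}(T)$ to fall out of a direct matching of the defining inequalities, once one checks that the $L$-dominance constraints $x_i\geq 0$ for $i\in R$ coincide on both sides (they do: $L$-dominance is intrinsic to both $X^*_{+,I}(T)$ and the domain of definition of the lowest-weight norm). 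In the general non-inert case the same argument applies factor by factor, using \eqref{Gzip-prod} and the product decomposition of all cones described in section \ref{sec-zipcone}; the last sentence of the theorem is then immediate from Proposition \ref{prop-cone-X-dom}.

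The main obstacle, I expect, is the bookkeeping in the case $R=\emptyset$ (Hilbert--Blumenthal), where $P=P_0=B$ is already defined over $\FF_p$ and the analysis of $L_0$, $I_0$, $\lambda_0$ degenerates differently: one must check that $\Ccal_{\lwsf}=\Ccal_{\hwsf}=\Ccal_{\pha}$ there, recovering the known result, and confirm it is consistent with $\Ccal_{E_n}\cap X^*_{+,I}(T)$ specializing correctly — but this is exactly the case already treated in \cite{Goldring-Koskivirta-global-sections-compositio}, so it serves as a sanity check rather than a new difficulty. A subtler point is making sure the positive-integer multiples introduced by the $L_\varphi$-norm (the section $\Norm_{L_\varphi}(f_{\low,\lambda})$ has weight $N\lambda$, not $\lambda$) do not obstruct membership: this is precisely why one works with the saturated cone $\Ccal_{\lwsf}$ rather than the raw set of weights of actual sections, so the multiples wash out. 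Beyond that, the argument is a finite explicit linear-algebra verification over $\QQ$, and I do not anticipate conceptual difficulties once the dictionary between \eqref{formula-norm-low} and the $p$-expressions $F^{(i)}_R$ is pinned down.
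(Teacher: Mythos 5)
Your proposal ultimately lands on the same argument as the paper: reduce to showing $\Ccal_{\lwsf}=\Ccal_{E_n}\cap X^*_{+,I}(T)$, then compute $P_0=\bigcap_i\sigma^i(P_R)=B$ so that $W_{L_0}=\{1\}$ and the inequalities \eqref{formula-norm-low} unwind to exactly the $p$-expressions $F^{(i)}_R(x)\leq 0$ defining $\Ccal_{E_n}$. (Your initial plan to check the generators $\lambda^{(i)}_k$ one at a time is superseded mid-argument by this direct matching of defining inequalities, which is what the paper does; the generator check would also suffice, but is more work.)

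Two small points to tighten. First, your reason for $P_0=B$ — that ``$R\neq\sigma R$ in general'' — is not quite the right statement; what one needs is $\bigcap_{i\in\ZZ}\sigma^i(R)=\emptyset$, which holds for every proper $R\subsetneq E_n$ (pick $j\notin R$; every $k\in E_n$ fails to lie in some shift $\sigma^i R$), including the case $R=\emptyset$, so your flagged worry about $R=\emptyset$ is in fact unnecessary — it is covered uniformly. Second, and more substantively, you never address the case $R=E_n$: there $P=G$, $P_0\neq B$, and your computation simply does not apply. The paper handles this separately, observing that all four cones collapse to $X^*_+(T)$ (with $X$ zero-dimensional but $\Flag(X)$ a $G/B$-bundle). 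You should add this case to complete the proof; otherwise the argument is correct and matches the paper's.
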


\begin{proof}
It suffices to show that $\Ccal_{\lwsf} = \Ccal_{E_n} \cap X^*_{+,I}(T)$. Recall that for a general group $G$, the cone $\Ccal_{\lwsf}$ is the set of $\lambda\in X^*_{+,I}(T)$ satisfying the inequality \eqref{formula-norm-low} for each $\alpha\in \Delta^{P_0}$. Here, recall that $P_0$ is the smallest parabolic subgroup containing $B$ defined over $\FF_p$. In other words, $P_0=\bigcap_{i\in \ZZ} \sigma^i(P)$. In the trivial case when $R=E_n$, one has $P=G$ and one sees easily that $\Ccal_{\lw}=X^*_+(T)$, thus all four cones in \eqref{equal-4} coincide with $X^*_+(T)$. Note that in this case $X$ is a zero dimensional variety by \eqref{dimXR}, but $\Flag(X)$ is a $G/B$-fibration over $X$, which explains why $\Ccal_X=X^*_+(T)$.

In all other cases, we have
\begin{equation}
    P_0= \bigcap_{i\in \ZZ} \sigma^i(P_R) = \bigcap_{i\in \ZZ} P_{\sigma^i(R)} = B.
\end{equation}
Hence we have $W_{L_0}=\{1\}$, and the lowest weight cone is given by the set of $L$-dominant characters $x=(x_1,\dots, x_n)\in X^*_{+,I}(T)$ satisfying the inequalities
\begin{equation}
     \sum_{i=0}^{n-1} p^i \delta_R^{(i+j)} x_{i+j} \leq 0, \quad j=1,\dots, n.
\end{equation}
These are exactly the inequalities defining the cone $\Ccal_{E_n}$. This terminates the proof.
\end{proof}

\subsection{The Hilbert--Blumenthal case}
This section is dedicated to the special case of Hilbert--Blumenthal varieties (i.e. the case $R=\emptyset$). We make explicit in this case the main theorems proved for a general parabolic type $R\subset E_n$. In this case $\Flag(X)=X$ and the line bundle $\Lcal(\mathbf{k})$ coincides with the line bundle $\omega^{\mathbf{k}}$ defined in \eqref{omegak-intro}. For a subset $S\subset E_n$, the flag stratum $\Flag(X)_S$ is simply the Ekedahl--Oort stratum parametrized by $S$, that we denote by $X_S$. The function
\begin{equation}
    \Phi\colon \Pcal(E_n)\to \Pcal(E_n)
\end{equation}
can be concretely described as follows: For a subset $S\subset E_n$, let $\Gamma=\Gamma_n(\emptyset,S)$ be the chain diagram of type $\emptyset$ for the stratum $S$. For each element $s\in S$, write $\gamma(s)$ for the number of elements in the connected component of $\Gamma$ whose head is $s-1$. It is the smallest positive integer such that $s-\gamma(s)\in S$. Then, define $\Phi(S)$ as the set
\begin{equation}
    \Phi(S)=\{s-i \ | \ s\in S, \ i \textrm{ odd}, \ 1\leq i <\gamma(s) \}.
\end{equation}
We write $\Ccal_S=\Ccal_{\emptyset,S}$ for the unique positive admissible homogeneous $S$-adapted $p$-cone. It is given by the inequalities $F^{(i)}_{\Phi(S)}(x)\leq 0$ for $x\in \ZZ^n$ where $i$ varies in $S$. Theorem \ref{vanish-thm} then translates as follows:
\begin{theorem}\label{thm-vanish-HB}
Let $S\subset E_n$ be a subset and $\mathbf{k}\in \ZZ^n$. If $\mathbf{k}\notin \Ccal_S$, then
\begin{equation}
H^0(\overline{X}_S,\omega^{\mathbf{k}}) =0.
\end{equation}
\end{theorem}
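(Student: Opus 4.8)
The plan is to obtain Theorem~\ref{thm-vanish-HB} as the specialization to $R = \emptyset$ of Theorem~\ref{vanish-thm}; once the notational dictionary is set up, no new argument is needed.

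First I would unwind what the objects in the statement mean when $R = \emptyset$. The projection $\pi \colon \Flag(X_R) \to X_R$ has fibers isomorphic to a product of $|R|$ copies of $\PP^1$, so for $R = \emptyset$ it is an isomorphism: thus $\Flag(X) = X$, the line bundle $\Lcal(\mathbf{k})$ is the bundle $\omega^{\mathbf{k}}$ of \eqref{omegak-intro}, and the flag stratum $\Flag(X)_S$ is the Ekedahl--Oort stratum $X_S$. Under these identifications, the vanishing $H^0(\overline{\Flag}(X_R)_S, \Lcal(\mathbf{k})) = 0$ furnished by Theorem~\ref{vanish-thm} becomes exactly $H^0(\overline{X}_S, \omega^{\mathbf{k}}) = 0$.

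Next I would assemble the two inputs that drive Theorem~\ref{vanish-thm}. Since a group of $A_1$-type satisfies Assumption~\ref{assume-hasse}, Theorem~\ref{thm-sep-syst} gives the inclusion $\Ccal_{X,S} \subset \Ccal_S^{\cap,+}$, where $\Ccal_{X,S}$ is the saturation of the set of weights admitting a nonzero section over $\overline{X}_S$; and Theorem~\ref{main-thm-hom} identifies $\Ccal_S^{\cap,+}$ with the unique positive admissible homogeneous $S$-adapted $p$-cone, which in the present notation is $\Ccal_S = \Ccal_{\emptyset,S}$. Combining, if $\mathbf{k} \notin \Ccal_S$ then $\mathbf{k} \notin \Ccal_{X,S}$, hence $H^0(\overline{X}_S, \omega^{\mathbf{k}}) = 0$.

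Finally, to make the cone $\Ccal_S$ in the statement fully explicit I would check that the construction of $\Phi(S)$ from Proposition~\ref{prop-unique} collapses to the stated formula when $R = \emptyset$: the heads of the connected components of $\Gamma_n(\emptyset, S)$ are precisely the elements $s - 1$ for $s \in S$; none lies in $R = \emptyset$, so by $i$-positivity no head belongs to $T = \Phi(S)$; and since every edge of $\Gamma_n(\emptyset, S)$ is a dotted line, admissibility (Definition~\ref{def-admis}) forces membership in $T$ to alternate as one moves from the head of a component to its tail, which yields $\Phi(S) = \{ s - i \mid s \in S,\ i \text{ odd},\ 1 \le i < \gamma(s) \}$ with $\gamma(s)$ the size of the component headed by $s-1$. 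There is no real obstacle here: all the substance — in particular both inclusions $\Ccal_S^{\cap,+} \subset \Ccal_S$ and $\Ccal_S \subset \Ccal_S^{\cap,+}$ — is carried by Theorem~\ref{main-thm-hom}, whose proof is the computational heart of Section~\ref{coho-van-sec}.
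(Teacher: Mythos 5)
Your proof is correct and is exactly the paper's proof: the paper derives Theorem~\ref{thm-vanish-HB} by specializing Theorem~\ref{vanish-thm} to $R=\emptyset$, which in turn rests on Theorem~\ref{thm-sep-syst} and Theorem~\ref{main-thm-hom}, and your first two paragraphs reproduce this chain faithfully along with the right dictionary $\Flag(X)=X$, $\Lcal(\mathbf{k})=\omega^{\mathbf{k}}$, $\Flag(X)_S=X_S$ when $R=\emptyset$.

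One small remark on your final paragraph, which is anyway not needed to prove the theorem since $\Ccal_S$ is defined directly as the unique positive admissible homogeneous $S$-adapted $p$-cone and not through the closed formula. Your derivation says (correctly, from Definition~\ref{def-posit} with $R=\emptyset$) that no head $s-1$, $s\in S$, lies in $T=\Phi(S)$, and that membership in $T$ alternates along each component. That derivation produces $T=\{\,s-i \mid s\in S,\ i\ \text{even},\ 2\le i\le\gamma(s)\,\}$, since it is the head $s-1$ (the case $i=1$) that is excluded. You then quote $\Phi(S)=\{\,s-i\mid i\ \text{odd},\ 1\le i<\gamma(s)\,\}$, which would put every head $s-1$ in $T$, contradicting the sentence before. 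So your argument and the formula you write down disagree on the parity. (In fact the paper's own worked example $n=7$, $S=\{1,3\}$, $\Phi(S)=\{5,7\}$ matches neither formula and is not admissible for that chain diagram; there appears to be a sign or parity slip somewhere in the paper's explicit description of $\Phi$. Running the construction of Proposition~\ref{prop-unique} as you describe gives $T=\{1,4,6\}$ for that example.) None of this affects the validity of Theorem~\ref{thm-vanish-HB} or your proof of it.
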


Next, we make the link with previous results of Goldring and the author in \cite{Goldring-Koskivirta-global-sections-compositio}. In this previous work, we introduced a combinatorial notion of "admissibility" for strata (unrelated to the admissibility of $p$-cones defined in this paper). This condition says precisely (using the terminology of the present paper) that all connected components of $\Gamma$ contain an odd number of elements. For those strata, the authors proved that one has an equality
\begin{equation}
    \Ccal_{X,S} = \Ccal_{\pha,S}.
\end{equation}
This is simply a special case of Theorem \ref{thm-hasse-reg}. Indeed, for $R=\emptyset$ it translates as follows:

\begin{theorem}
Assume $|S|\geq 2$. Then the following are equivalent:
\begin{equivlist}
\item One has $\Ccal_S = \Ccal_{\pha,S}$.
\item $\Ccal_{\pha,S}$ is homogeneous.
\item For any connected component $C\subset \Gamma$, the number $\# \ C$ is odd.
\end{equivlist}
\end{theorem}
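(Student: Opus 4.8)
The plan is to obtain this statement as the special case $R=\emptyset$ of Theorem \ref{thm-hasse-reg}. Conditions (i) and (ii) are literally unchanged, so the only point to check is that condition (iii) here is the specialization of condition (iii) of Theorem \ref{thm-hasse-reg}. But for $R=\emptyset$ every connected component $C$ of the chain diagram $\Gamma=\Gamma_n(\emptyset,S)$ satisfies $C\cap R=\emptyset$, so $\#\,C\cap R=0$ is even; hence the requirement that $\#\,C\cap R$ and $\#\,C$ have different parity is equivalent to the requirement that $\#\,C$ be odd. This yields the theorem at once.

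For completeness I would also indicate how to see the equivalences directly in this case, without invoking the general result. One uses that $\Ccal_{\pha,S}$ is always a positive, admissible, $S$-adapted $p$-cone; being homogeneous is then equivalent to coinciding with the unique such homogeneous cone $\Ccal_S$, which gives (i)$\Leftrightarrow$(ii). For (ii)$\Leftrightarrow$(iii) one uses the explicit description of the function $\rho_{\pha,S}$ from \eqref{rho-pha}, which for $R=\emptyset$ reads: $\rho_{\pha,S}(i)$ is the set of $j\in E_n$ for which $\#\,(S^c\cap\,]j,i[)$ is odd. Comparing $\rho_{\pha,S}(i)$ with $\rho_{\pha,S}(i^+)$, where $i^+$ is the element of $S$ following $i$, the parity count attached to (a generic) $j$ changes by the fixed quantity $\#\,(S^c\cap\,]i,i^+[)=\#\,C_i-1$, where $C_i=[i,i^+[$ is the connected component of $\Gamma$ with tail $i$. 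Since each component of $\Gamma$ contains exactly one element of $S$ (its tail) and $|S|\geq 2$, as $i$ runs over $S$ the component $C_i$ runs over all components of $\Gamma$; hence $\rho_{\pha,S}$ is constant precisely when $\#\,C-1$ is even for every component $C$, i.e. when every $\#\,C$ is odd.

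There is essentially no obstacle here: the first route is a one-line specialization of Theorem \ref{thm-hasse-reg}, and the second is a bookkeeping exercise with the parity counts defining $\Phi$ in the case $R=\emptyset$. The only mild points requiring care in the direct argument are the cyclic edge cases when verifying that the count "changes by a fixed quantity" (which is handled by restricting attention to suitable $j$, or by treating the finitely many exceptional $j$ by hand), and the use of the hypothesis $|S|\geq 2$ to guarantee that every connected component of $\Gamma$ is realized as some $C_i$ with $i\in S$.
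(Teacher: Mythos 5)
Your proof is correct and takes exactly the paper's approach: the paper derives this theorem as the $R=\emptyset$ specialization of Theorem \ref{thm-hasse-reg}, via the observation that $\#\, C\cap R=0$ is then even, so that ``different parity'' reduces to $\#\, C$ being odd. Your optional direct argument is simply the paper's proof of Theorem \ref{thm-hasse-reg} specialized to $R=\emptyset$, with careful attention (correctly) paid to which connected component carries the parity change.
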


\subsection{Siegel-type Shimura varieties}

We end this article with a brief discussion about possible generalizations to other Shimura varieties. One of the main examples that we have in mind is the case of Siegel-type Shimura varieties $\Acal_n$, which parametrizes principally polarized abelian varieties of dimension $n$ (with a level structure). This variety is associated with the group $\GSp_{2n}$, which seems totally unrelated to the groups of $A_1$-type considered in this paper. However, the two cases bear a striking resemblance. In \cite{Goldring-Koskivirta-global-sections-compositio, Goldring-Koskivirta-divisibility}, Goldring and the author completely determined for $n=2,3$ the cone $\Ccal_K$ defined in \eqref{CK-def} in the case of Siegel-type Shimura varieties (when $n=3$, we only proved the result for $p\geq 5$). For $n=3$, we showed that $\Ccal_K$ is given by
\begin{equation}\label{ineq-Sp}
    \Ccal_K = \left\{ (x_1,x_2,x_3)\in X^*_{+,I}(T) \ \relmiddle| \ \parbox{4cm}{ $p^2 x_1 + x_2 + px_3 \leq 0 \\
    px_1 + p^2x_2 + x_3 \leq 0$ } \right\}.
\end{equation}
In this case, the condition $(x_1,x_2,x_3)\in X^*_{+,I}(T)$ simply means that $x_1\geq x_2\geq x_3$. In other words, $\Ccal_K$ is the intersection of a $p$-cone with $X^*_{+,I}(T)$ just as for the cases considered in this article (see Theorem \ref{thm-cone-conj}). Even better: One can add to the two inequalities in \eqref{ineq-Sp} a third one, given by $x_1 + px_2 + p^2x_3 \leq 0$. This does not change the right-hand side of \eqref{ineq-Sp} because this inequality is implied by the two other inequalities combined with the condition $(x_1,x_2,x_3)\in X^*_{+,I}(T)$. Thus, we see that the cone $\Ccal_K$ for the group $\GSp_6$ can be written as
\begin{equation}
   \Ccal_K = \Ccal_{E_3} \cap X^*_{+,I}(T).
\end{equation}
In other words, the inequalities defining $\Ccal_K$ inside $X^*_{+,I}(T)$ are the same as for Hilbert--Blumenthal Shimura varieties. The only difference is the $L$-dominance condition, which has a different meaning for the group $\GSp_3$.

\bibliographystyle{test}
\bibliography{biblio_overleaf}

\end{document}